\def\titlerunning#1{\gdef\titrun{#1}}
\def\author#1{\gdef\autrun{\def\and{\unskip, }#1}\gdef\@author{#1}}
\def\address#1{{\def\and{\\\hspace*{18pt}}\renewcommand{\thefootnote}{}%
\footnote {#1}}%
\markboth{\autrun}{\titrun}}
\def\email#1{e-mail: #1}
\def\subjclass#1{{\renewcommand{\thefootnote}{}%
\footnote{\emph{Mathematics Subject Classification (2020):} #1}}}
\def\keywords#1{\par\medskip
\noindent\textbf{Keywords.} #1}
\newtheorem{theorem}{Theorem}
\newtheorem{lemma}{Lemma}[section]
\newtheorem{definition}[lemma]{Definition}
\newtheorem{thm}[lemma]{Theorem}
\newtheorem{proposition}[lemma]{Proposition}
\newtheorem{remark}[lemma]{Remark}
\newtheorem{corollary}[lemma]{Corollary}
\newtheorem{example}[lemma]{Example}
\newcommand{\gm}{\gamma}
\newcommand{\R}{\mathbb{R}}
\newcommand{\Proof}{\begin{proof}}
\newcommand{\End}{\end{proof}}
\numberwithin{equation}{section}
\newcommand{\PreserveBackslash}[1]{\let\temp=\\#1\let\\=\temp}
\newcolumntype{C}[1]{>{\PreserveBackslash\centering}p{#1}}
\newcolumntype{R}[1]{>{\PreserveBackslash\raggedleft}p{#1}}
\newcolumntype{L}[1]{>{\PreserveBackslash\raggedright}p{#1}}
\newcolumntype{I}{!{\vrule width 1pt}}
\newlength\savedwidth
\begin{document}


\baselineskip=15pt


\titlerunning{A nonlinear semigroup approach to  Hamilton-Jacobi equations}

\title{A nonlinear semigroup approach to Hamilton-Jacobi equations--revisited}

\author{Panrui Ni \and Lin Wang}

\date{\today}

\maketitle

\address{Panrui Ni: Shanghai Center for Mathematical Sciences,  Fudan University, Shanghai 200433, China; \email{prni18@fudan.edu.cn}
\and Lin Wang: School of Mathematics and Statistics, Beijing Institute of Technology, Beijing 100081, China; \email{lwang@bit.edu.cn}
}
\subjclass{37J50; 35F21; 35D40}

\vspace{-2em}

\begin{abstract}
We consider the Hamilton-Jacobi equation
\[{H}(x,Du)+\lambda(x)u=c,\quad x\in M,
\]
where $M$ is a connected, closed and smooth Riemannian manifold.  The functions ${H}(x,p)$ and $\lambda(x)$ are continuous. ${H}(x,p)$  is convex, coercive with respect to  $p$, and $\lambda(x)$ changes the signs. The first breakthrough to this model was achieved by Jin-Yan-Zhao \cite{JYZ} under the Tonelli conditions. In this paper, we consider more detailed structure of the viscosity solution set and large time behavior of the viscosity solution on the Cauchy problem.

\keywords{Hamilton-Jacobi equations,  viscosity solutions,  weak KAM theory}
\end{abstract}

%
\tableofcontents


\section{Introduction and main results}\label{sec1}
\setcounter{equation}{0}
\setcounter{footnote}{0}


Inspired by Aubry-Mather theory and weak KAM theory for classical Hamiltonian systems,  an action minimizing method for contact Hamiltonian systems was developed in a series of papers \cite{SWY,WWY,WWY1,WWY2,WWY3}. Let $H:T^*M\times\R\to\R$ be a contact Hamiltonian. It turns out that the dependence of $H$ on the contact variable $u$ plays a crucial role in exploiting the dynamics generated by $H$.
By using previous dynamical approaches, some progress on viscosity solutions of Hamilton-Jacobi (HJ) equations have been achieved \cite{SWY,WWY1,WWY3}. In particular, the structure of the set of solutions  can be sketched if $H$ is
uniformly Lipschitz in $u$
based on the works mentioned before. Shortly after
\cite{WWY1} occurred, \cite{erg} generalized the results to ergodic problems by using  PDE approachs.
More recently, for a class of HJ equations with non-monotone dependence on $u$, the first breakthrough was achieved by Jin-Yan-Zhao \cite{JYZ} under the Tonelli conditions. In that  work, they provided a  description of the solution set of the stationary equation (formulated as (\ref{hu}) below) and revealed a bifurcation phenomenon with respect to  the value $c$ in the right hand side, which opened a way to exploit further properties of viscosity solutions beyond well-posedness for HJ equations with non-monotone dependence on $u$. The main results in this paper are  motivated by  \cite{JYZ}.

Let us consider the stationary equation:
\begin{equation}\tag{E$_0$}\label{hu}
{H}(x,Du)+\lambda(x)u=c,\quad x\in M.
\end{equation}
Throughout this paper, we assume
$M$ is a closed, connected and smooth Riemannian manifold. { $D$ denotes the spacial gradient with respect to $x\in M$.} Denote by $TM$ and $T^*M$ the tangent bundle and cotangent bundle of $M$ respectively. Let $H:T^*M\rightarrow\mathbb R$ satisfy
\begin{itemize}
\item [\textbf{(C):}] $H(x,p)$ is continuous;

\item [\textbf{(CON):}] $H(x,p)$ is convex in $p$, for any $x\in M$;

\item [\textbf{(CER):}] $H(x,p)$ is coercive in $p$, i.e. $\lim_{\|p\|_x\rightarrow +\infty}H(x,p)=+\infty$, where $\|\cdot\|_x$ denotes the norms induced by $g$ on both $TM$ and $T^*M$.
\end{itemize}

Correspondingly, one has the Lagrangian associated to $H$:
\begin{equation*}
  L(x,\dot x):=\sup_{p\in T^*_xM}\{\langle \dot x,p\rangle_x-H(x,p)\},
\end{equation*}
where $\langle\cdot,\cdot\rangle_x$ represents the canonical pairing between $T_xM$ and $T^*_xM$. The Lagrangian $L(x,\dot x)$ satisfies the following properties:
\begin{itemize}
\item [\textbf{(LSC):}] $L(x,\dot x)$ is lower semicontinous in $\dot x$, and continuous on the interior of its domain $\textrm{dom}(L):=\{(x,\dot x)\in TM:\ L(x,\dot x)<+\infty\}$;

\item [\textbf{(CON):}] $L(x,\dot x)$ is convex in $\dot x$, for any $x\in M$.
\end{itemize}
We also assume {$\lambda(x)$ is continuous} and satisfies
\begin{itemize}
\item [\textbf{($\pm$):}] there exist $x_1,x_2\in M$ such that $\lambda(x_1)>0$ and $\lambda(x_2)<0$.
\end{itemize}
{Throughout this paper, we define
\begin{equation}\label{v-norm}
\lambda_0:=\|\lambda(x)\|_\infty>0,
\end{equation}}
where $\|\cdot\|_\infty$ stands for the supremum norm of the  functions on their domains.
From an economical point of view, the assumption ($\pm$) may be corresponding to  a model with fluctuating  interest rates. To be more precise, the rates
depend on the space variable and admit values above and below zero at some places. From a
theoretical point of view, it is a toy model of the HJ equations with non-monotone dependence on $u$. Based on this model, we revealed some different phenomena  from the cases with monotone dependence on $u$ can be revealed.
\begin{remark}
The model (\ref{hu}) has been considered in \cite{Z1}. In that paper, the function $\lambda(x)$ is non-negative and positive on the projected Aubry set of $H(x,p)$. In this case,  the solution of (\ref{hu}) is unique. The asymptotic behavior of the solution of (\ref{hu}) is also studied in \cite{Z1} when $\lambda_0 \to 0$. When $\lambda_0 \to 0$ and the assumption ($\pm$) holds, the family of solutions of (\ref{hu}) may diverge, one can refer to \cite{N} for an example.
\end{remark}

In \cite{NWY}, the well-posedness of the Lax-Oleinik semigroup was verified for contact HJ equations under very mild conditions. By virtue of that, we generalize the results in \cite{JYZ} to the cases from the Tonelli conditions to the assumptions (C), (CON) and (CER) above. Henceforth, for simplicity of notation, we omit the word ``viscosity", if it is not necessary to be mentioned.
\begin{proposition}[Generalization of \cite{JYZ}]\label{mth}
 Let
\[c_0:=\inf_{u\in C^\infty(M)}\sup_{x\in M}\bigg\{H(x,Du)+\lambda(x)u\bigg\}.\]
Then $c_0$ is finite. Given $c\geq c_0$, the $\|\cdot\|_{W^{1,\infty}}$-norm of  all subsolutions of (\ref{hu}) is bounded. Moreover,
\begin{itemize}
\item [(1)] (\ref{hu}) has a solution if and only if $c\geq c_0$;
\item [(2)] if $c>c_0$, then (\ref{hu}) has at least two solutions.
\end{itemize}
\end{proposition}
The definition of $c_0$ is inspired by \cite{CIPP}. In light of that, $c_0$ is called the critical value. { Now we consider the most general case
\[H(x,u(x),Du(x))=c,\quad x\in M,\]
where the Hamiltonian $H(x,u,p)$ is continuous, superlinear in $p$ and uniformly Lipschitz in $u$. It was pointed out in \cite{erg} that there is a constant $c\in\mathbb R$ such that the above equation has viscosity solutions. Here we give some examples on the set $\mathfrak C$ of all such $c$'s, which reveals the essential difference between the monotone cases and the non-monotone cases:
\begin{itemize}
\item for classical Tonelli Hamiltonian $H(x,p)$, the set $\mathfrak C=\{c_0\}$. The number $c_0$ is called the effective Hamiltonian or the Ma\~n\'e critical value;
\item for the discounted Hamilton-Jacobi equation, i.e., the Hamiltonian is of the form $\lambda u+H(x,p)$ with $\lambda>0$, the set $\mathfrak C=\mathbb R$, see for example \cite{DFIZ};
\item for the model (\ref{hu}) considered here, the set $\mathfrak C=[c_0,+\infty)$. Here we note that the non-emptiness of $\mathfrak C$ is proved under (CER) instead of $H(x,p)$ is superlinear in $p$. {In view of the existence result in \cite{erg}, it means Proposition \ref{mth} is a non-trivial generalization of \cite{JYZ}};
\item for the Hamiltonian periodically depending on $u$, i.e., $H(x,u+1,p)\equiv H(x,u,p)$, the set $\mathfrak C$ is a bounded closed interval, see \cite{NWY2}.
\end{itemize}
}

Different from the Tonelli case considered in \cite{JYZ},  some new ingredients are needed for {\it a priori} estimates of subsolutions under the assumptions (C), (CON) and (CER). Those estimates  will be provided in Section \ref{bd}. The remaining  parts of the proof of Proposition \ref{mth} are similar to the one in \cite{JYZ}. We  postpone it to  Appendix \ref{A} for consistency.

Motivated by Proposition \ref{mth}, we are devoted to exploiting  more detailed information of this model. First of all, we obtain
\begin{theorem}\label{mthee2}
Let $c\geq c_0$.
There exist the maximal element $u_{\max}$ and the minimal element $u_{\min}$ in the set of solutions of (\ref{hu}).
\end{theorem}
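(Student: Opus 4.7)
The plan is to construct $u_{\max}$ and $u_{\min}$ as the pointwise envelopes of the solution set $\mathcal{S}_c$ of (\ref{hu}) and then use the Lax--Oleinik semigroup associated with the evolutionary equation to show these envelopes are themselves solutions. By the Generalization of \cite{JYZ} stated above, $\mathcal{S}_c \neq \emptyset$ and its elements are uniformly bounded in $W^{1,\infty}(M)$; hence the pointwise envelopes
\[
u_{\max}(x) := \sup_{u \in \mathcal{S}_c} u(x), \qquad u_{\min}(x) := \inf_{u \in \mathcal{S}_c} u(x)
\]
are well-defined Lipschitz functions on $M$. It remains to show $u_{\max}, u_{\min} \in \mathcal{S}_c$.

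Let $T_t^c$ denote the Lax--Oleinik semigroup of $u_t + H(x,Du) + \lambda(x) u = c$ on $[0,\infty)\times M$, whose well-posedness under (C), (CON) and (CER) is proven in \cite{NWY}. Two facts will be used: $u \in \mathcal{S}_c$ if and only if $T_t^c u = u$ for every $t \geq 0$, and $T_t^c$ is order-preserving (from the comparison principle for the evolutionary Cauchy problem). For $u_{\max}$: since every $u \in \mathcal{S}_c$ satisfies $u \leq u_{\max}$, monotonicity yields $u = T_t^c u \leq T_t^c u_{\max}$, so taking the supremum in $u$ gives $u_{\max} \leq T_t^c u_{\max}$. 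Applying $T_s^c$ to this inequality, the semigroup property implies $t \mapsto T_t^c u_{\max}(x)$ is non-decreasing for every $x$. A standard viscosity-theoretic lemma then implies $x \mapsto T_t^c u_{\max}(x)$ is a subsolution of (\ref{hu}) for each $t > 0$ (the monotonicity in $s$ of the evolutionary solution transfers to a non-negative $\partial_s \Phi$ in the viscosity inequality, which can be discarded). The \emph{a priori} subsolution bound from the Generalization of \cite{JYZ} then makes the family $\{T_t^c u_{\max}\}_{t \geq 0}$ equi-Lipschitz and equi-bounded, and by Dini's theorem $T_t^c u_{\max} \to u^*$ uniformly as $t \to \infty$ for some Lipschitz $u^*$. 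Stability of viscosity solutions gives $T_s^c u^* = u^*$, hence $u^* \in \mathcal{S}_c$; then $u^* \leq u_{\max}$ by definition and $u^* \geq u_{\max}$ by monotone convergence, forcing $u_{\max} = u^* \in \mathcal{S}_c$.

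For $u_{\min}$, the symmetric inequality $T_t^c u_{\min} \leq u_{\min}$ makes $t \mapsto T_t^c u_{\min}$ non-increasing, and the dual form of the viscosity lemma shows each $T_t^c u_{\min}$ is a supersolution of (\ref{hu}). The only missing ingredient is a uniform lower bound on this family, for which the assumption $(\pm)$ is used essentially: at a point $x_1$ with $\lambda(x_1) > 0$, the supersolution inequality reads
\[
\lambda(x_1) T_t^c u_{\min}(x_1) \geq c - H(x_1, D T_t^c u_{\min}(x_1)),
\]
so once an equi-Lipschitz-in-$x$ bound on $T_t^c u_{\min}$ (coming from the semigroup regularity in \cite{NWY}) bounds the right-hand side, $T_t^c u_{\min}(x_1)$ is bounded below uniformly in $t$, and equi-Lipschitz propagates this to all of $M$. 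The monotone limit is again a Lipschitz fixed point of $T_s^c$, and the sandwich argument identifies it with $u_{\min}$.

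The principal difficulty is the ``monotone-in-$s$ evolutionary solution $\Rightarrow$ stationary sub/supersolution'' step in the viscosity sense: because $\lambda$ changes sign, (\ref{hu}) is not proper, so the classical ``sup of subsolutions is a subsolution'' (or its dual) is unavailable. The workaround is to test $v(s,x) := T_s^c u_{\max}(x)$ against perturbed test functions $\Phi(s,x) = \varphi(x) + A(s - t)_+^2$ with $A \to \infty$, producing the desired stationary inequality with a discardable non-negative residual $\partial_s \Phi \geq 0$; the supersolution version is analogous with $(t - s)_+^2$.
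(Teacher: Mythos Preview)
Your argument for $u_{\max}$ is correct and gives a clean alternative to the paper's route. The paper instead starts from a constant $\varphi$ strictly above all solutions and shows that the lower half-limit $\check\varphi(x)=\liminf_* T_t^-\varphi$ is a viscosity solution lying above every element of $\mathcal S_c$; your approach of iterating $T_t^-$ on the pointwise supremum and using that each $T_t^- u_{\max}$ is a Lipschitz subsolution (hence uniformly bounded in $W^{1,\infty}$ by Proposition~\ref{uibo}) is equally valid and arguably more direct. Note that the ``monotone $\Rightarrow$ subsolution'' step is exactly the content of the last lemma in Appendix~\ref{subsolution}, so your test-function workaround is not needed here.

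The argument for $u_{\min}$, however, has a genuine gap. The lower bound you propose is circular: to bound $T_t^c u_{\min}(x_1)$ from below via the supersolution inequality you must control $H\bigl(x_1,DT_t^c u_{\min}(x_1)\bigr)$, which requires an equi-Lipschitz estimate on $\{T_t^c u_{\min}\}_{t\ge 0}$. The semigroup regularity in \cite{NWY} (Proposition~\ref{m1}) gives Lipschitz constants that depend on the time horizon and on the $L^\infty$ bound of the solution; an equi-Lipschitz bound uniform in $t$ is only available either under the extra hypothesis~(*) (Corollary~\ref{equilip}, which is \emph{not} assumed in Theorem~\ref{mthee2}) or once the family is already known to be equi-bounded. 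There is no analogue of Proposition~\ref{uibo} for supersolutions---supersolutions of (\ref{hu}) are not bounded below a priori---so the symmetry with the $u_{\max}$ case breaks down precisely here. Moreover, the implication ``$T_t^-\psi\le\psi$ for all $t$ $\Rightarrow$ $\psi$ is a supersolution'' is not among the equivalences of Proposition~\ref{eudomii}, and your sketched test-function argument with $(t-s)_+^2$ penalties does not close.

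The paper circumvents this asymmetry by passing to the \emph{forward} Lax--Oleinik semigroup $T_t^+$. Starting from a constant below all forward weak KAM solutions, the upper half-limit of $T_t^+\varphi$ produces the minimal forward weak KAM solution $\hat\varphi$; since forward weak KAM solutions are subsolutions (Proposition~\ref{eudomii}), Proposition~\ref{uibo} applies and the required bounds come for free. One then sets $\hat\varphi_\infty:=\lim_{t\to\infty}T_t^-\hat\varphi$ and verifies (Lemma~\ref{minlemm}) that this is the minimal viscosity solution, using that every $v_-\in\mathcal S_c$ satisfies $v_-\ge \lim_{t\to\infty}T_t^-\bigl(\lim_{s\to\infty}T_s^+ v_-\bigr)\ge \hat\varphi_\infty$. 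The interplay between $T^-$ and $T^+$ is the missing idea in your approach.
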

\begin{remark}\label{minfr}
The viscosity solutions are equivalent to backward weak KAM solutions in our settings (see \cite[Proposition D.4]{NWY}). In terms of the correspondence between backward and forward weak KAM solutions (see Proposition \ref{eqvvv} below), it follows from Theorem \ref{mthee2} that there exist the maximal and minimal forward weak KAM solutions of (\ref{hu}). We denote $u^+_{\min}$ (resp. $u^+_{\max}$) the minimal (resp. maximal) froward weak KAM solution of (\ref{hu}). By Proposition \ref{S1}(3)(4), there hold
\[u^+_{\min}\leq u_{\min}=\lim_{t\to+\infty}T_t^-u^+_{\min},\quad \lim_{t\to+\infty}T_t^+u_{\max}=u^+_{\max}\leq u_{\max}.\]
\end{remark}
{
\begin{figure}[htbp]
\small \centering
\includegraphics[width=9cm]{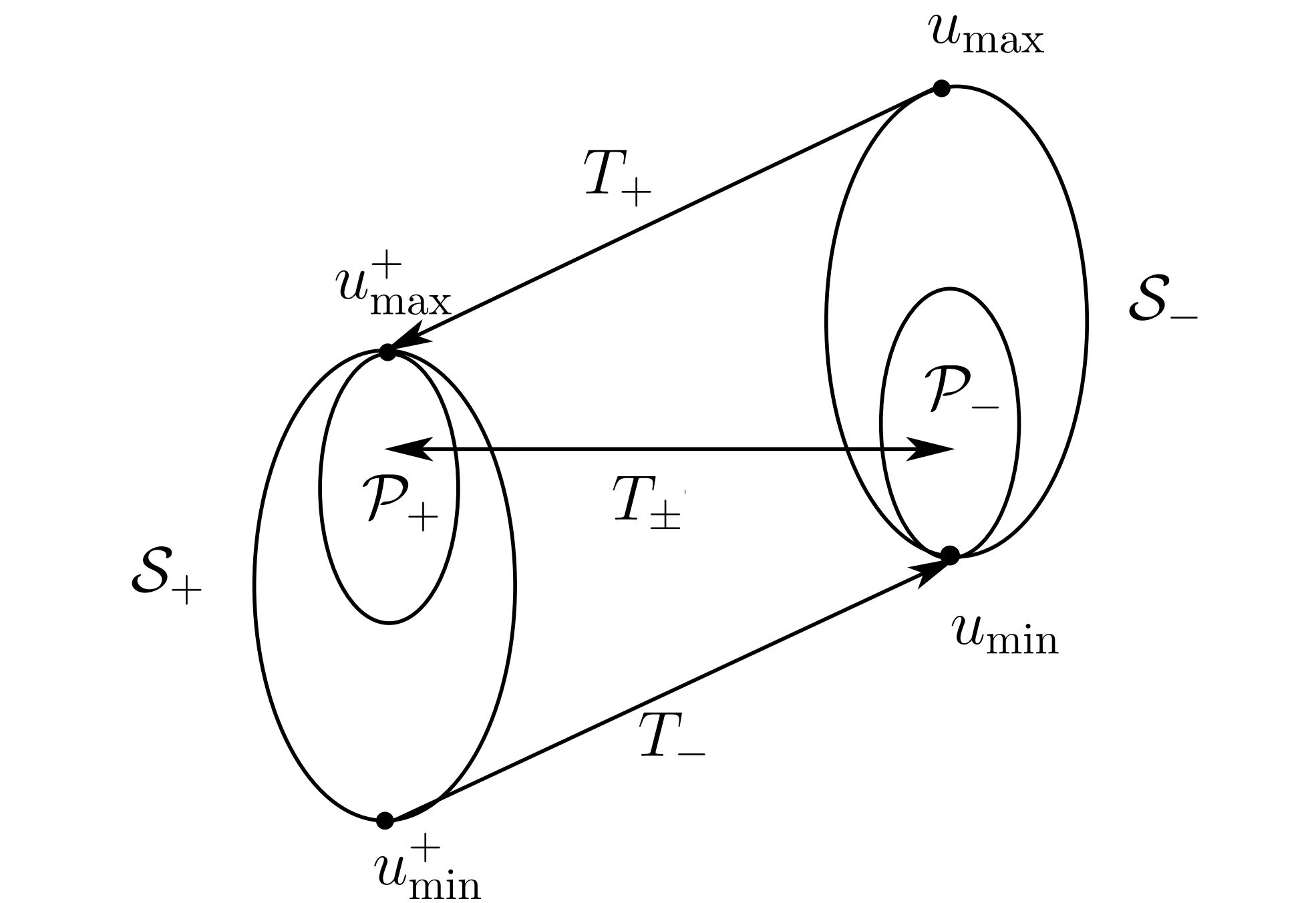}
\caption{The structure of the solution set of (\ref{hu})}\label{fig3}
\end{figure}
Let $\mathcal{S}_-$ (resp. $\mathcal{S}_+$) be the set of all backward (resp. forward) weak KAM solutions. Given $u_{\pm}\in \mathcal{S}_{\pm}$, if
\[u_-=\lim_{t\to\infty}T_t^-u_+,\quad u_+=\lim_{t\to\infty}T_t^+u_-,\]
then $u_-$ (resp. $u_+$) is called a conjugated backward (resp. forward) weak KAM solution.
See Figure \ref{fig3} for a rough description of structure of the solution set of (\ref{hu}) in general cases, where $\bold{T}_\pm:=\lim_{t\to\infty}T_t^{\pm}$, and $\mathcal{P}_-$ (resp. $\mathcal{P}_+$) denotes the set of all conjugated backward (resp. forward) weak KAM  solutions. }{ For further statement on conjugated weak KAM solutions, one can refer to \cite[Theorem 6.5 and Theorem 7.1]{Ish6}.}

By Proposition \ref{mth}(2), (\ref{hu}) has at least two solutions if $c>c_0$. Then a natural question is to figure out what happens if $c=c_0$.
In \cite{JYZ}, Jin, Yan and Zhao considered the following example:
\begin{example}\label{e1}
\begin{equation}\label{e1e}
  |u'(x)|^2+\sin x\cdot u(x)=c,\quad x\in \mathbb S^1\simeq [0,2\pi),
\end{equation}
where $\mathbb S^1$ denotes a flat circle with a fundamental domain $[0,2\pi)$.
\end{example}
It was shown that  $c_0=0$ and there are uncountably many solutions of (\ref{e1e}) in the critical case. A rough picture of certain solutions is given by Figure \ref{fig2}. See \cite[Theorem 3.5]{JYZ} for more details.
\begin{figure}[htbp]
\small \centering
\includegraphics[width=8cm]{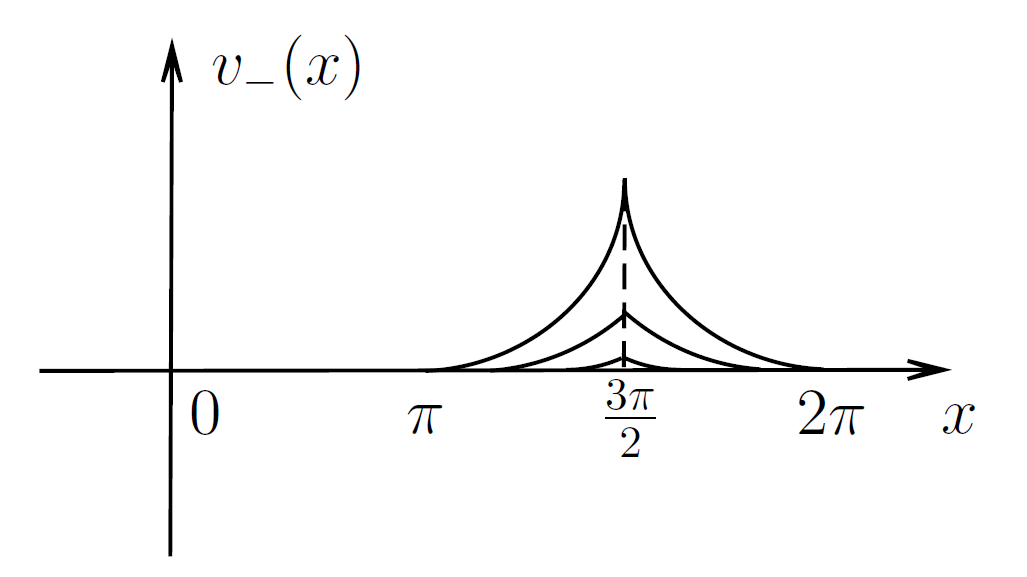}
\caption{Certain solutions of  (\ref{e1e}) with $c=0$}\label{fig2}
\end{figure}

{As a complement, we consider}
\begin{example}\label{e2}
\begin{equation}\label{e3e}
  \frac{1}{2}|u'(x)|^2+\sin x\cdot u(x)+\cos2x-1=c,\quad x\in \mathbb S^1\simeq [0,2\pi).
\end{equation}
\end{example}
We will prove that the critical value is also $c_0=0$, but  (\ref{e3e}) admits a unique solution in the critical case.  A rough picture of the solution is given by Figure \ref{fig}. See Remark \ref{geex} below for certain generalization of Example \ref{e2}.  Those two examples above show various possibilities about the solution set of  (\ref{hu})  in the critical case.

\begin{figure}[htbp]
\small \centering
\includegraphics[width=8cm]{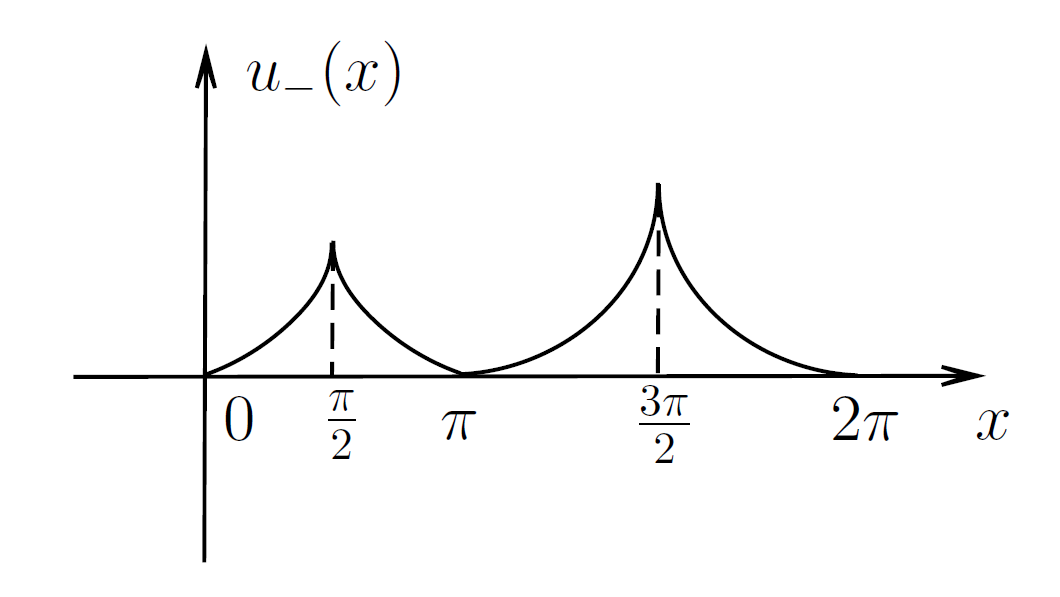}
\caption{The unique solution of  (\ref{e3e}) with $c=0$}\label{fig}
\end{figure}

In the second part, we consider  the evolutionary equation:
\begin{equation}\tag{CP}\label{C1}
  \left\{
   \begin{aligned}
   &\partial_t u(x,t)+H(x,Du(x,t))+\lambda(x)u(x,t)=c,\quad (x,t)\in M\times(0,+\infty).
   \\
   &u(x,0)=\varphi(x),\quad x\in M,
   \\
   \end{aligned}
   \right.
\end{equation}
where $\varphi\in C(M)$. It is well known that the viscosity solution of (\ref{C1}) is unique  (see \cite[Corollary 3.2]{Ish6} for instance). By \cite[Theorem 1]{NWY}, this solution can be represented by $u(x,t):=T_t^-\varphi(x)$, where $T^-_t: C(M)\rightarrow C(M)$ is defined implicitly by
\begin{equation}\tag{T-}\label{T-ee}
  T^-_t\varphi(x)=\inf_{\gamma(t)=x} \left\{\varphi(\gamma(0))+\int_0^t\bigl[L(\gamma(\tau),\dot{\gamma}(\tau))-\lambda(\gamma(\tau))T^-_\tau\varphi(\gamma(\tau))+c\bigl]{d}\tau\right\},
\end{equation}
where the infimum  is taken among absolutely continuous curves $\gm:[0,t]\rightarrow M$ with $\gamma(t)=x$.

In order to obtain equi-Lipschitz continuity of $\{T_t^-\varphi\}_{t\geq \delta}$ for a given $\delta>0$,  we have to strengthen the assumptions on $H$ from (CON), (CER) to the following:
\begin{itemize}
\item [\textbf{($\star$)}] $H(x,p)$ is strictly convex in $p$ for any $x\in M$, and there is a superlinear function $\theta:[0,+\infty)\to [0,+\infty)$ such that $H(x,p)\geq \theta(\|p\|)$.
\end{itemize}
Under the assumption ($\star$), the equi-Lipschitz continuity of $\{T_t^-\varphi\}_{t\geq \delta}$ follows from the locally Lipschitz property and boundedness of $T_t^-\varphi$ on $M\times (0,+\infty)$. From the weak KAM point of view, that kind of {locally Lipschitz property } can be verified by a standard procedure once we have the Lipschitz regularity of minimizers of $T_t^-\varphi(x)$ (see \cite[Lemma 4.6.3]{Fat-b}). However,  $H$ is only supposed to be continuous in our setting. Then one can not use the method of characteristics to  improve regularity of these minimizers. Following \cite{CCJWY}, we will deal with that issue by using the method of energy estimates. A key ingredient of that method is to establish the Erdmann condition for a non-smooth energy function. More precisely, we obtain the following result, whose proof is given in Appendix \ref{B}.
\begin{proposition}\label{mthlip}
Assume ($\star$) holds. If $T^-_t\varphi(x)$ has a bound independent of $t$, then  the family $\{T_t^-\varphi\}_{t\geq \delta}$ is equi-Lipschitz continuous, where $\delta$ is an arbitrarily positive constant.
\end{proposition}


Let us recall $u_{\max}$ denotes the maximal solution of (\ref{hu}), and $u^+_{\min}$ denotes its minimal froward weak KAM solution. By Remark \ref{minfr}, $u^+_{\min}\leq u_{\max}$ on $M$. Both of them play an  important role in characterizing the  large time behavior of the solution of (\ref{C1}).
By assuming ($\star$) holds, we obtain the following two results.
\begin{theorem}\label{mth2}
Let $u(x,t)$ be the solution of (\ref{C1}) with $c\geq c_0$. Then
\begin{itemize}
\item [(1)]  if the initial data $\varphi \geq u_{\max}$, then $u(x,t)$  converges to $u_{\max}$  uniformly on $M$ as $t\to +\infty$;
\item [(2)]  if there is a point $x_0\in M$ such that $\varphi(x_0)<u^+_{\min}(x_0)$, then $u(x,t)$ tends to $-\infty$ uniformly on $M$  as $t\to +\infty$.
\end{itemize}
\end{theorem}

\begin{theorem}\label{mth3}
Let $u(x,t)$ be the solution of (\ref{C1}) with $c>c_0$.  If the initial data $\varphi>u^+_{\min}$,  then $u(x,t)$ converges to $u_{\max}$ uniformly on $M$ as $t\to +\infty$.
\end{theorem}

\begin{remark}
{ For $\varphi\geq u^+_{\min}$}, if there exists $x_0\in M$ such that $\varphi(x_0)=u^+_{\min}(x_0)$, then $u(x,t)$ may not converge to $u_{\max}$.
\begin{itemize}
\item In  Example \ref{e1} with $c=0$, for each solution $v$ of (\ref{e1e}), it is easy to construct an initial data $\varphi$ satisfying { $\varphi\geq 0\geq u^+_{\min}$ and}
\[\{x\in M\ |\ \varphi(x)=u^+_{\min}(x)\}\neq \emptyset\]
such that $u(x,t)$ converges to $v$ uniformly on $M$. In fact, one can take $\varphi=v$ for instance.
\item For  Example \ref{e1} with $c=1$, by \cite[Theorem 3.14]{JYZ}, $u_{\min}=\sin x\neq u_{\max}$ and
\[\{x\in M\ |\ u_{\min}(x)=u^+_{\min}(x)\}\neq \emptyset.\]
Then one can take $\varphi=\sin x$ such that $u(x,t)$ converges to $u_{\min}$ uniformly on $M$.
\end{itemize}
\end{remark}


\vspace{1ex}

The rest of this paper is organized as follows. Section \ref{pre} gives some preliminaries on $T_t^{\pm}$, weak KAM solutions and Aubry sets. In Section \ref{bd}, {\it a priori} estimates on subsolutions of (\ref{hu}) are established.
The proof of  Theorem \ref{mthee2} and a detailed analysis of Example \ref{e2} are given in Section \ref{pthee2}.  Theorem \ref{mth2} and Theorem \ref{mth3} are proved in Section \ref{pth2} For the sake of completeness, some auxiliary results are proved in Appendix A.

\section{Preliminaries}\label{pre}

In this part, we collect some facts on $T_t^{\pm}$, weak KAM solutions and Aubry sets.  These facts hold under more general assumptions on the dependence of $u$.
Consider the evolutionary equation:
\begin{equation}\tag{A$_0$}\label{C}
  \left\{
   \begin{aligned}
   &\partial_t u(x,t)+H(x,u(x,t),Du(x,t))=0,\quad (x,t)\in M\times(0,+\infty).
   \\
   &u(x,0)=\varphi(x),\quad x\in M.
   \\
   \end{aligned}
   \right.
\end{equation}
and the stationary equation:
\begin{equation}\tag{B$_0$}\label{hj}
  H(x,u(x),Du(x))=0
\end{equation}

 { We denote by $(x,u,p)$ a point in $T^*M\times\mathbb R$, where $(x,p)\in T^*M$ and $u\in\mathbb R$.} Let $H:T^*M\times\mathbb R\rightarrow\mathbb R$ be a continuous Hamiltonian satisfying
\begin{itemize}
\item [\textbf{(CON):}] $H(x,u,p)$ is convex in $p$, for any $(x,u)\in M\times \mathbb R$;

\item [\textbf{(CER):}] $H(x,u,p)$ is coercive in $p$, i.e. $\lim_{\|p\|_x\rightarrow +\infty}(\inf_{x\in M}H(x,0,p))=+\infty$;

\item [\textbf{(LIP):}] $H(x,u,p)$ is Lipschitz in $u$, uniformly with respect to $(x,p)$, i.e., there exists $\Theta>0$ such that $|H(x,u,p)-H(x,v,p)|\leq \Theta|u-v|$, for all $(x,p)\in\ T^*M$ and all $u,v\in\mathbb R$.
\end{itemize}
Correspondingly, one has the Lagrangian associated to $H$:
\begin{equation*}
  L(x,u,\dot x):=\sup_{p\in T^*_xM}\{\langle \dot x,p\rangle-H(x,u,p)\}.
\end{equation*}
Due to the absence of superlinearity of $H$, the corresponding Lagrangian $L$ may take the value $+\infty$. Define
\[\text{dom}(L):=\{(x,\dot{x},u)\in TM\times\R\ |\ L(x,u,\dot{x})<+\infty\}.\]
Then $L(x,u,\dot x)$ satisfies the following properties:
\begin{itemize}
\item [\textbf{(LSC):}] $L(x,u,\dot x)$ is lower semicontinuous, and continuous on the interior of $\textrm{dom}(L)\times\mathbb R$;

\item [\textbf{(CON):}] $L(x,u,\dot x)$ is convex in $\dot x$, for any $(x,u)\in M\times \mathbb R$;

\item [\textbf{(LIP):}] $L(x,u,\dot x)$ is Lipschitz in $u$, uniformly with respect to $(x,\dot x)$, i.e., there exists $\Theta>0$ such that $|L(x,u,\dot x)-L(x,v,\dot x)|\leq \Theta|u-v|$, for all $(x,\dot x)\in\ \textrm{dom}(L)$ and all $u,v\in\mathbb R$.
\end{itemize}


\begin{proposition}\cite[Theorem 1]{NWY}\label{m1}
Both backward Lax-Oleinik semigroup
\begin{equation}\label{T-}
  T^-_t\varphi(x)=\inf_{\gamma(t)=x} \left\{\varphi(\gamma(0))+\int_0^tL(\gamma(\tau),T^-_\tau\varphi(\gamma(\tau)),\dot{\gamma}(\tau))\textrm{d}\tau\right\}
\end{equation}
and forward Lax-Oleinik semigroup
\begin{equation}\label{T+eq}
  T^+_t\varphi(x)=\sup_{\gamma(0)=x}\left\{\varphi(\gamma(t))-\int_0^tL(\gamma(\tau),T^+_{t-\tau}\varphi(\gamma(\tau)),\dot{\gamma}(\tau))d\tau\right\}.
\end{equation}
are well-defined for $\varphi\in C(M)$. The infimum  (resp. supremum) is taken among absolutely continuous curves $\gm:[0,t]\rightarrow M$ with $\gamma(t)=x$ (resp. $\gamma(0)=x$). If  $\varphi$ is  continuous, then $ u(x,t):=T^-_t\varphi(x)$ represents the unique  continuous viscosity solution of (\ref{C}). If $\varphi$ is Lipschitz continuous, then $u(x,t):=T^-_t\varphi(x)$ is also Lipschitz continuous on $M\times [0,+\infty)$.
\end{proposition}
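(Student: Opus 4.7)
The plan is to reduce the implicitly-defined Lax--Oleinik semigroup to a standard Carath\'eodory ODE problem along absolutely continuous competitors. Given an absolutely continuous $\gamma:[0,t]\to M$ with $\dot\gamma(\tau)$ in the velocity slice of $\textrm{dom}(L)$ almost everywhere, consider the scalar ODE
\begin{equation*}
\dot h(\tau)=L(\gamma(\tau),h(\tau),\dot\gamma(\tau)),\quad h(0)=\varphi(\gamma(0)).
\end{equation*}
By (LIP) the right-hand side is measurable in $\tau$ and uniformly Lipschitz in $h$, so the Carath\'eodory existence/uniqueness theorem yields a unique absolutely continuous $h_\gamma$. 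Set $T_t^-\varphi(x):=\inf\{h_\gamma(t):\gamma(t)=x\}$ (and symmetrically $T_t^+$). A Gronwall comparison between $h_\gamma$ and the candidate trace $T^-_\tau\varphi\circ\gamma$ identifies this with the implicit formula (\ref{T-}).

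Finiteness and continuity come next. An upper bound is produced by testing with short constant-speed geodesics in a neighbourhood where $L$ is continuous (using that (CER) together with the Legendre transform ensures $L(x,u,0)<+\infty$), and the resulting $h_\gamma(t)$ is bounded via (LIP) and Gronwall. A lower bound follows from (CER) in its dual form $L(x,u,v)\ge -C(|u|+1)$ on bounded velocities, again combined with Gronwall on $h_\gamma$. The semigroup identity $T^-_{t+s}=T^-_t\circ T^-_s$ is obtained by concatenating competitors and invoking ODE uniqueness. Continuity of $(x,t)\mapsto T^-_t\varphi(x)$ is the most delicate point: without Tonelli superlinearity minimizer existence is unavailable, so one reduces via the semigroup property to a short-time estimate, then transports a competitor from a nearby base point by a Lipschitz reparametrization and controls the change in $h_\gamma(t)$ by (LIP).

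Once continuity is in hand, the viscosity solution property follows by a standard dynamic programming argument. For the subsolution inequality, test with a competitor satisfying $\dot\gamma(0)=v$ for admissible $v$ and let $\tau\downarrow 0$, using Legendre--Fenchel duality $H(x,u,p)=\sup_v(\langle p,v\rangle-L(x,u,v))$. The supersolution inequality uses nearly-minimizing competitors and the lower semicontinuity of $L$. Uniqueness of the continuous solution of (\ref{C}) is the classical comparison principle for HJ equations with Lipschitz dependence on $u$. When $\varphi$ is Lipschitz, a spatial Lipschitz estimate for $T_t^-\varphi$ is propagated by transferring competitors along a short geodesic between $x$ and a nearby $x'$ (the extra cost controlled by (LIP) and the bound on $L$ near the origin); Lipschitz continuity in $t$ then follows from the equation together with the spatial estimate.

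The main obstacle is the simultaneous loss of Tonelli superlinearity and the admissibility of $+\infty$ values for $L$: one cannot extract minimizers to work with, so the whole argument must be conducted at the level of infima. The role of (LIP) is crucial here, because it is what converts the implicit definition (\ref{T-}) into the tractable first-order ODE for $h_\gamma$, whose solution depends monotonically and controllably on the competitor, and this monotone dependence is what lets both the semigroup identity and the viscosity characterization be extracted without ever selecting a minimizer.
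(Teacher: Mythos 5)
The paper does not supply a proof of this proposition: it is quoted verbatim from \cite[Theorem 1]{NWY} and used as a black box throughout. So there is no in-text proof to compare against; what follows is an assessment of your sketch on its own terms.

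Your overall plan -- reduce the implicit Lax--Oleinik formula to the Carath\'eodory ODE $\dot h=L(\gamma,h,\dot\gamma)$ along fixed competitors, define $T^-_t\varphi(x)=\inf_\gamma h_\gamma(t)$, then pass through a dynamic programming/comparison argument -- is indeed the standard route in the contact weak KAM literature (\cite{WWY,WWY1,CCJWY,NWY}), and your remarks about why (LIP) is the structural hypothesis that makes the ODE and the Gronwall machinery work are accurate. The identification of the subsolution and supersolution inequalities via DPP, and the transport of Lipschitz estimates along short geodesics, are also correct in outline.

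The thin spot is the sentence ``A Gronwall comparison between $h_\gamma$ and the candidate trace $T^-_\tau\varphi\circ\gamma$ identifies this with the implicit formula.'' This is not a single Gronwall application. Along a non-minimizing competitor $\gamma$ one has $h_\gamma(\tau)\geq T^-_\tau\varphi(\gamma(\tau))$ but no sign information on $L(\gamma,h_\gamma,\dot\gamma)-L(\gamma,T^-_\tau\varphi(\gamma(\tau)),\dot\gamma)$, since $L$ is not monotone in $u$, only Lipschitz. Gronwall does give \emph{uniqueness} of a continuous $U$ satisfying the implicit relation (compare with Lemma \ref{T-T+>} in the body of the paper, whose proof is exactly of this type), but showing that the ODE-based infimum $U(x,t):=\inf_\gamma h_\gamma(t)$ actually \emph{satisfies} the implicit relation requires a genuine dynamic programming step: one must first prove the semigroup identity $U(\cdot,t+s)=\inf_\gamma\{\ldots\}$ by concatenation and only then argue that, along $\epsilon$-minimizers, $h_\gamma(\tau)$ and $U(\gamma(\tau),\tau)$ are uniformly close on all of $[0,t]$, which needs more than the endpoint estimate $h_\gamma(t)\leq U(x,t)+\epsilon$. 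You should either supply this intermediate DPP lemma or invoke the implicit variational principle of \cite{WWY1} to justify the identification. A minor point: the lower bound $L(x,u,v)\geq -C(|u|+1)$ comes from Legendre duality at $p=0$ together with (LIP), not from (CER); (CER) is what guarantees $L$ takes finite values on a nontrivial velocity cone, which you do need for the upper bound and for the constant-speed geodesic competitors.
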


\begin{proposition}\cite[Proposition 3.1]{NWY}\label{psg}
The Lax-Oleinik semigroups  have the following properties
\begin{itemize}
\item [(1)] For $\varphi_1$ and $\varphi_2 \in C(M)$, if $\varphi_1(x)< \varphi_2(x)$ for all $x\in M$, we have $T^-_t\varphi_1(x)< T^-_t\varphi_2(x)$ and $T^+_t\varphi_1(x)< T^+_t\varphi_2(x)$ for all $(x,t)\in M\times(0,+\infty)$.

\item [(2)] Given any $\varphi$ and $\psi\in C(M)$, we have $\|T^-_t\varphi-T^-_t\psi\|_\infty\leq e^{\Theta t}\|\varphi-\psi\|_\infty$ and $\|T^+_t\varphi-T^+_t\psi\|_\infty\leq e^{\Theta t}\|\varphi-\psi\|_\infty$ for all $t>0$.
\end{itemize}
\end{proposition}

Following Fathi \cite{Fat-b}, one can extend the definitions of backward and forward weak KAM solutions of  equation \eqref{hj} by using absolutely continuous calibrated curves instead of $C^1$ curves.
\begin{definition}\label{bws}
A function $u_-\in C(M)$ is called a backward weak KAM solution of \eqref{hj} if
\begin{itemize}
\item [(1)] For each absolutely continuous curve $\gamma:[t',t]\rightarrow M$, we have
\begin{equation*}
  u_-(\gamma(t))-u_-(\gamma(t'))\leq \int_{t'}^{t}L(\gamma(s),u_-(\gamma(s)),\dot \gamma(s))ds.
\end{equation*}
The above condition reads that $u_-$ is dominated by $L$ and denoted by $u_-\prec L$.

\item [(2)] For each $x\in M$, there exists an absolutely continuous curve $\gamma_-:(-\infty,0]\rightarrow M$ with $\gamma_-(0)=x$ such that
\begin{equation*}
  u_-(x)-u_-(\gamma_-(t))=\int_t^0L(\gamma_-(s),u_-(\gamma_-(s)),\dot \gamma_-(s))ds,\quad \forall t<0.
\end{equation*}
The curves satisfying the above equality are called $(u_-,L,0)$-calibrated curves.
\end{itemize}
\end{definition}
A forward weak KAM solution of \eqref{hj} can be defined in a similar manner.
Similar to \cite[Proposition 2.8]{WWY2}, one has
\begin{proposition}\label{rela}
	Let $\varphi\in C(M)$. Then
\begin{align}\label{sg}
-T^+_t(-\varphi)=\bar{T}^-_t\varphi,\quad -T^-_t(-\varphi)=\bar{T}^+_t\varphi,\quad \forall t\geq 0.
\end{align}
where $\bar{T}_t^{\pm}$ denote the Lax-Oleinik semigroups associated to {$L(x,-u,-\dot x)$.}
\end{proposition}

{The following two results are well known for Hamilton-Jacobi equations independent of $u$. They are also true in contact cases. We will prove them in Appendixes \ref{subsolution} and \ref{tt}. Proposition \ref{eudomii} provides some equivalent characterizations of Lipschitz subsolutions. Proposition \ref{T-T+>} shows that $T^+_t$ is a `weak inverse' of $T^-_t$.}

\begin{proposition}\label{eudomii}
Let $\varphi\in Lip(M)$. The following conditions are equivalent:
\begin{itemize}
\item [(1)] $\varphi$ is a Lipschitz subsolution of (\ref{hj});
\item [(2)] $\varphi\prec L$;
\item [(3)] for each $t\geq 0$,
\[T_t^-\varphi\geq \varphi\geq T_t^+\varphi.\]
\end{itemize}
\end{proposition}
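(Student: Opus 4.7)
The plan is to establish the cycle via three main steps: $(1) \Leftrightarrow (2)$ by Fenchel duality, $(1) \Rightarrow (3)$ by the contact comparison principle together with the conjugation relation of Proposition \ref{rela}, and $(3) \Rightarrow (1)$ by a viscosity-test-function argument.

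For $(1) \Leftrightarrow (2)$ my approach is pointwise Fenchel inequality. Since $\varphi$ is Lipschitz, being a viscosity subsolution of $H(x,\varphi,D\varphi)=0$ is equivalent to the a.e.\ inequality $H(x,\varphi(x),D\varphi(x))\leq 0$, and Fenchel duality then yields $L(x,\varphi(x),\dot x)\geq \langle D\varphi(x),\dot x\rangle$ for almost every $x$ and every $\dot x$. To turn this into the domination inequality along an arbitrary absolutely continuous curve $\gamma:[a,b]\to M$, I would mollify $\varphi$ in coordinate charts, use continuity of $H$ together with the (LIP) property of $L$ in $u$ to bound the perturbation uniformly, integrate classically along $\gamma$, and pass to the limit. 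The converse $(2) \Rightarrow (1)$ is lighter: at a differentiability point $x$ of $\varphi$, inserting the short geodesic $\gamma(\tau)=\exp_x(\tau v)$ into the domination inequality, dividing by $\tau$, and letting $\tau\to 0^+$ gives $\langle D\varphi(x),v\rangle\leq L(x,\varphi(x),v)$; taking the supremum over $v$ recovers the a.e.\ subsolution inequality, which for a Lipschitz function is equivalent to the viscosity condition.

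For $(1) \Rightarrow (3)$ I would invoke the contact comparison principle. Viewed as time-independent, $\varphi$ is a viscosity subsolution of $\partial_t w+H(x,w,\partial_x w)=0$, while $u(x,t):=T^-_t\varphi(x)$ is the unique viscosity solution of the associated Cauchy problem supplied by Proposition \ref{m1}; since they coincide at $t=0$, comparison yields $\varphi\leq T^-_t\varphi$. For the companion inequality $T^+_t\varphi\leq\varphi$ I would pass to the conjugate Hamiltonian $\bar H(x,u,p):=H(x,-u,-p)$: the Lipschitz function $-\varphi$ is a subsolution of $\bar H(x,u,Du)=0$ by direct substitution, so the previous step applied to $\bar T^-_t$ gives $\bar T^-_t(-\varphi)\geq -\varphi$, which via Proposition \ref{rela} is exactly $T^+_t\varphi\leq\varphi$.

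For $(3) \Rightarrow (1)$ I would argue by contradiction using viscosity test functions. If $\varphi$ fails to be a subsolution at $x_0$, pick $\phi\in C^1$ with $\varphi-\phi$ attaining a strict local maximum zero at $x_0$ (making it strict by adding a quadratic bump if needed) and $H(x_0,\varphi(x_0),D\phi(x_0))>0$; for sufficiently small $\eta>0$, the function $\psi(x,t):=\phi(x)-\eta t$ is a smooth strict supersolution of the evolution equation on a cylinder $B_r(x_0)\times[0,t_0]$, with $\psi(\cdot,0)\geq \varphi$ on $B_r(x_0)$ and $\psi>T^-_t\varphi$ on the lateral boundary for small $t$ (using Lipschitz-in-$t$ continuity of the semigroup on Lipschitz data). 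The localised parabolic comparison principle then forces $T^-_t\varphi(x_0)\leq\psi(x_0,t)<\varphi(x_0)$ for small $t>0$, contradicting (3). The main technical obstacle is the mollification step in $(1)\Rightarrow (2)$: an absolutely continuous $\gamma$ may traverse the null set where $D\varphi$ is undefined, so the integral-form inequality cannot be read off pointwise and must be obtained via smoothing, with the (LIP) property of $L$ in $u$ and the uniform continuity of $L$ on the relevant compact sets of $(x,u,\dot x)$ used to control the approximation error and pass to the limit.
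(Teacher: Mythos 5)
Your proposal is correct in outline but takes a genuinely different route from the paper's, which proves the implications as a cycle $(1)\Rightarrow(2)\Rightarrow(3)\Rightarrow(1)$ using purely variational and Gronwall-type arguments (Appendix A, Lemmas \ref{SL}, \ref{strss}, and the closing unnamed lemma).

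For $(1)\Rightarrow(2)$ the paper does not mollify; it instead invokes Ishii's chain rule (\cite[Prop.\ 2.4]{Ish3}) for Clarke's generalized gradient to produce a measurable selection $q(s)\in\partial_c\varphi(\gamma(s))$ with $\frac{d}{ds}\varphi(\gamma(s))=q(s)\cdot\dot\gamma(s)$ a.e.; convexity of $H$ in $p$ then gives $H(\gamma(s),\varphi(\gamma(s)),q(s))\le 0$, and the Fenchel inequality is applied directly under the integral. Your mollification route is also viable (it is the DFIZ-style smoothing the paper uses elsewhere, as in Proposition \ref{ns}), but it is more delicate: you must invoke convexity of $H$ in $p$ to guarantee that the mollified function remains an approximate subsolution, and use (LIP) to absorb the $u$-perturbation; the Clarke-gradient route avoids this entirely. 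For $(2)\Rightarrow(3)$ the paper argues by contradiction along a minimizer of $T^-_t\varphi$, applying Gronwall to the difference $F(\tau)=\varphi(\gamma(\tau))-T^-_\tau\varphi(\gamma(\tau))$—this needs only the semigroup's variational formula, not any PDE comparison theorem. Your $(1)\Rightarrow(3)$ via the contact comparison principle works, but note that under the standing hypothesis (LIP) (no monotonicity in $u$) comparison for the Cauchy problem is not automatic; it holds after the standard exponential-in-time change of unknown, which is implicit in the citation to \cite[Cor.\ 3.2]{Ish6} but worth making explicit. For $(3)\Rightarrow(1)$ the paper again avoids comparison: it fixes a differentiability point $x$ of $\varphi$, runs a short $C^1$ curve, uses $T^-_t\varphi\ge\varphi$ together with the representation formula, divides by $t$ and sends $t\to 0^+$ to obtain the a.e.\ subsolution inequality for a truncated Hamiltonian $H_R$ (truncation is needed because $L$ may be discontinuous/infinite without superlinearity), and concludes by stability as $R\to\infty$. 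Your localized parabolic comparison with the test supersolution $\psi(x,t)=\phi(x)-\eta t$ is a valid alternative that sidesteps $H_R$, at the cost of needing a local comparison principle for the evolution equation; your boundary-matching estimates on the cylinder are correct given the Lipschitz-in-$t$ regularity from Proposition \ref{m1}.

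In short: your scheme is sound, but it trades the paper's self-contained variational/Gronwall machinery for two appeals to viscosity comparison principles whose validity under (C), (CON), (CER), (LIP) you would need to justify separately (via the exponential-weight reduction), and your mollification leg of $(1)\Rightarrow(2)$ replaces the paper's cleaner Clarke-gradient chain-rule argument.
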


{
\begin{proposition}\label{T-T+>}
For each $\varphi\in C(M)$, we have $T^+_t\circ T^-_t \varphi \leq \varphi\leq T^-_t\circ T^+_t \varphi$ for all $t\geq 0$.
\end{proposition}

}

The following three results come from \cite{NWY}, which give some connections among the fixed points of $T_t^{\pm}$, the lower (resp. upper) half limit, backward (resp. forward) weak KAM solutions and Aubry sets.
\begin{proposition}\cite[Proposition D.4]{NWY}\label{eqvvv}
Let $u_-\in C(M)$. The following statements are equivalent:
\begin{itemize}
\item [(1)]
$u_-$ is a fixed point of $T^-_t$;
\item [(2)] $u_-$ is a backward weak KAM solution of (\ref{hj});
\item [(3)] $u_-$ is a viscosity solution of (\ref{hj}).
\end{itemize}
Similarly, let $v_+\in C(M)$. The following statements are equivalent:
\begin{itemize}
\item [(1')] $v_+$ is a fixed point of $T^+_t$;
 \item [(2')] $v_+$ is a forward weak KAM solution of (\ref{hj});
 \item [(3)'] $-v_+$ is a viscosity solution of $H(x,-u(x),-Du(x))=0$.
\end{itemize}
\end{proposition}

\begin{proposition}\cite[Theorem 3 and Remark 3.5]{NWY}\label{S1}
Let $\varphi\in C(M)$.
\begin{itemize}
\item [(1)] If $T^-_t\varphi(x)$ has a bound independent of $t$, then the lower half limit
\[\check{\varphi}(x)=\lim_{r\rightarrow 0+}\inf\{T^-_t\varphi(y):\ d(x,y)<r,\ t>1/r\}\]
is a Lipschitz  solution of (\ref{hj}).
\item [(2)] If $T^+_t\varphi(x)$ has a bound independent of $t$, then the upper half limit
\[\hat{\varphi}(x)=\lim_{r\rightarrow 0+}\sup\{T^+_t\varphi(y):\ d(x,y)<r,\ t>1/r\},\]
is a Lipschitz  forward weak KAM solution of (\ref{hj}).
\item [(3)]  Let $u_-$ be a  solution of (\ref{hj}). Then  $T_t^+u_-\leq u_-$. The limit $u_+:=\lim_{t\rightarrow +\infty}T_t^+u_-$ exists, and $u_+$ is a forward weak KAM solution of (\ref{hj}).
\item [(4)]  Let $v_+$ be a forward weak KAM solution of (\ref{hj}). Then  $T_t^-v_+\geq v_+$. The limit $v_-:=\lim_{t\rightarrow +\infty}T_t^-v_+$ exists, and $v_-$ is a  solution of (\ref{hj}).
\end{itemize}
\end{proposition}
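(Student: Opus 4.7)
The plan is to exploit three tools in tandem: the fixed-point characterization of weak KAM solutions (Proposition \ref{eqvvv}), the subsolution-preservation identities $T_t^-\varphi \geq \varphi \geq T_t^+\varphi$ for Lipschitz subsolutions (Proposition \ref{eudomii}), and the $e^{\Theta t}$-Lipschitz dependence of the Lax-Oleinik semigroups on their initial data together with the order-preserving property (Proposition \ref{psg}). Items (1)--(2) will be handled by a half-relaxed-limit argument while (3)--(4) will follow from a monotone-convergence argument, and within each pair the two statements are dual under the conjugation identity $-T_t^+(-\varphi)=\bar T_t^-\varphi$ of Proposition \ref{rela}, so I would write out the details for (1) and (3) and deduce (2) and (4) by duality.

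For (1), the first step is to show that the family $\{T_t^-\varphi\}_{t\geq 1}$ is equi-Lipschitz on $M$. Writing $T_t^-\varphi=T_1^-(T_{t-1}^-\varphi)$ and using the uniform bound on $T_{t-1}^-\varphi$ together with the coercivity \textbf{(CER)} yields an equi-Lipschitz bound depending only on this uniform bound. Hence the half-relaxed limit $\check\varphi$ is well-defined and Lipschitz. To see that $\check\varphi$ is a fixed point of $T_s^-$ for every $s\geq 0$, pick sequences $y_n\to x$ and $t_n\to\infty$ with $T_{t_n}^-\varphi(y_n)\to\check\varphi(x)$, apply $T_s^-$ to the semigroup identity $T_{s+t_n}^-\varphi=T_s^-T_{t_n}^-\varphi$, and pass to the limit using the $e^{\Theta s}$-contraction in the initial datum from Proposition \ref{psg}(2). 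Proposition \ref{eqvvv} then identifies $\check\varphi$ as the desired viscosity solution of (\ref{hj}). Part (2) follows by applying (1) to the dual backward semigroup $\bar T_t^-$ acting on $-\varphi$ via Proposition \ref{rela}.

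For (3), let $u_-$ be a viscosity solution. By Proposition \ref{eudomii} it is in particular a Lipschitz subsolution, so $T_t^+u_-\leq u_-$ for all $t\geq 0$. Applying the order-preserving operator $T_s^+$ (Proposition \ref{psg}(1)) and using the semigroup law gives $T_{s+t}^+u_-=T_s^+(T_t^+u_-)\leq T_s^+u_-$, so $t\mapsto T_t^+u_-$ is pointwise nonincreasing. It is bounded below because $u_-$ is continuous on compact $M$ and because the equi-Lipschitz argument from (1), applied now to the forward semigroup, keeps the orbit uniformly bounded. The pointwise limit $u_+$ therefore exists, and the $e^{\Theta s}$-contraction of $T_s^+$ yields $T_s^+u_+=\lim_{t\to\infty}T_s^+T_t^+u_-=\lim_{t\to\infty}T_{s+t}^+u_-=u_+$, so by Proposition \ref{eqvvv} $u_+$ is a forward weak KAM solution. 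Equi-Lipschitz monotone pointwise convergence to a continuous limit on compact $M$ upgrades automatically to uniform convergence by Dini's theorem. Statement (4) is obtained in the same way after observing that a forward weak KAM solution $v_+$ is itself a Lipschitz subsolution, which gives $T_t^-v_+\geq v_+$; order-preservation of $T_s^-$ then makes $t\mapsto T_t^-v_+$ pointwise nondecreasing and bounded above, so the limit exists and is a fixed point of $T_s^-$.

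The main technical obstacle I expect is the equi-Lipschitz control of the semigroup orbits, which in the Tonelli setting is immediate from Euler-Lagrange extremality but here must be extracted purely from the coercivity of $H$ together with the $e^{\Theta t}$-contraction estimate. A secondary point requiring justification is that a forward weak KAM solution is itself a Lipschitz subsolution in this contact Lipschitz-in-$u$ setting; in the classical case this is standard, and here it should follow from the dual characterization via Proposition \ref{rela} combined with the corresponding subsolution property of backward solutions given by Proposition \ref{eudomii}.
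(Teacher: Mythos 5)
The paper does not prove this proposition; it is imported directly from \cite[Theorem 3 and Remark 3.5]{NWY}, so there is no internal proof to compare against. Judged on its own merits, your architecture (fixed-point characterization via Proposition \ref{eqvvv}, monotonicity from Proposition \ref{eudomii}, duality via Proposition \ref{rela} to get (2) from (1) and (4) from (3)) is the right scaffolding, and the duality step in particular is correct and clean. However, there are two genuine gaps in the core arguments.

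In item (1), the step ``pick sequences $y_n\to x$, $t_n\to\infty$ with $T_{t_n}^-\varphi(y_n)\to\check\varphi(x)$, apply $T_s^-$ $\dots$ and pass to the limit using the $e^{\Theta s}$-contraction'' does not go through as stated. The contraction in Proposition \ref{psg}(2) is a bound on $\|T_s^-\varphi_1 - T_s^-\varphi_2\|_\infty$, so invoking it requires $\|T_{t_n}^-\varphi-\check\varphi\|_\infty\to 0$, i.e.\ uniform convergence of the whole orbit to $\check\varphi$; you only have convergence of scalars at a single point. Even after upgrading via Arzel\`a-Ascoli, a uniformly convergent subsequence $T_{t_{n_k}}^-\varphi\to\psi$ produces a function satisfying $\psi\geq\check\varphi$ with equality only at the chosen point, and it is not clear that $\psi=\check\varphi$. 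This is precisely where the real work is: the standard half-relaxed limit (Barles--Perthame) machinery yields only that $\check\varphi$ is a \emph{supersolution} of \eqref{hj}, so the content of the statement is exactly that the lower half-limit is additionally a subsolution, which needs an argument beyond a naive limiting passage.

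In item (3), the lower bound on $t\mapsto T_t^+u_-$ is asserted but not justified. You write that equi-Lipschitz control ``keeps the orbit uniformly bounded,'' but equi-Lipschitz only controls oscillation and is entirely compatible with $\min_M T_t^+u_-\to-\infty$ (indeed Lemma \ref{55} in this very paper exhibits such blow-up for initial data below $u^+_{\min}$, so boundedness is not automatic and must come from the fact that the initial datum $u_-$ is itself a solution). The missing ingredient is the non-emptiness of the Aubry set $\mathcal{I}_{u_-}$ (Proposition \ref{abbb}): the monotone orbit $T_t^+u_-$ is anchored to $u_-$ at points of $\mathcal{I}_{u_-}$, and only then does equi-Lipschitz propagate a uniform lower bound to all of $M$. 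Without this anchoring the nonincreasing limit could be identically $-\infty$ and the claimed forward weak KAM solution would not exist.
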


\begin{proposition}\cite[Theorem 3]{NWY}\label{abbb}
Let $u_-$ (resp. $u_+$) be a solution (resp. forward weak KAM solution) of (\ref{hj}).  We define the projected Aubry set with respect to $u_-$ by
\begin{equation*}
  \mathcal I_{u_-}:=\{x\in M:\ u_-(x)=\lim_{t\rightarrow +\infty}T^+_tu_-(x)\}.
\end{equation*}
Correspondingly, we define the projected Aubry set with respect to $u_+$ by
\begin{equation*}
  \mathcal I_{u_+}:=\{x\in M:\ u_+(x)=\lim_{t\rightarrow +\infty}T^-_tu_+(x)\}.
\end{equation*}
Both  $\mathcal I_{u_-}$ and $\mathcal I_{u_+}$ are nonempty. In particular, if $u_+(x)=\lim_{t\rightarrow +\infty}T^+_tu_-(x)$ and $u_-(x)=\lim_{t\rightarrow +\infty}T^-_tu_+(x)$, then
\[\mathcal I_{u_-}=\mathcal I_{u_+},\]
which is also denoted by $\mathcal I_{(u_-,u_+)}$, following the notation introduced by Fathi \cite{Fat-b}.
\end{proposition}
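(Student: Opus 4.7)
The statement has two parts: the nonemptiness of each of $\mathcal I_{u_-}$ and $\mathcal I_{u_+}$, and their equality under the compatibility assumptions. My plan is to handle nonemptiness by a minimization argument on $M$ combined with an Arzel\`a--Ascoli extraction of a calibrated curve, and to derive the coincidence from a direct substitution in the definitions.

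For the nonemptiness of $\mathcal I_{u_-}$, I would apply Proposition~\ref{S1}(3) to $u_-$ to obtain the forward weak KAM solution $\bar u_+ := \lim_{t\to\infty} T_t^+ u_-$, which satisfies $\bar u_+ \leq u_-$. Then $\mathcal I_{u_-}$ is exactly the zero set of the continuous nonnegative function $w := u_- - \bar u_+$ on the compact manifold $M$; let $m := \min_M w \geq 0$ and let $x_0 \in M$ realize this minimum. The goal is to show $m = 0$. For each large $t$, I would choose an absolutely continuous curve $\gamma_t : [0,t] \to M$ with $\gamma_t(0) = x_0$ that is nearly maximizing in the supremum defining $T_t^+ u_-(x_0)$. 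Using coercivity (CER) together with the a priori bounds on subsolutions, these curves carry uniformly bounded action on every bounded subinterval, so (LSC) and Arzel\`a--Ascoli yield, via a diagonal argument in $t \to \infty$, a limit curve $\gamma_\infty : [0, \infty) \to M$ with $\gamma_\infty(0) = x_0$. Along $\gamma_\infty$, comparing the fixed-point identity $\bar u_+ = T_s^+ \bar u_+$ with the near-maximizing property of $\gamma_t$, and using the minimality $w \geq m$, I would show that $w(\gamma_\infty(s)) \equiv m$ for all $s \geq 0$. Any $\omega$-limit point $x_\infty$ of $\gamma_\infty$ then provides the desired element of $\mathcal I_{u_-}$: a final application of (LIP) to bound the drift between $L(\cdot, T_{t-\tau}^+ u_- \circ \gamma_t, \cdot)$ and $L(\cdot, \bar u_+ \circ \gamma_t, \cdot)$ in the Lax--Oleinik integrands pins down $m = 0$. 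A symmetric argument using Proposition~\ref{S1}(4) applied to $u_+$, with $\bar u_- := \lim_{t \to \infty} T_t^- u_+ \geq u_+$ in the role of $\bar u_+$, gives $\mathcal I_{u_+} \neq \emptyset$.

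The coincidence $\mathcal I_{u_-} = \mathcal I_{u_+}$ under the hypotheses $u_+(x) = \lim_{t \to \infty} T_t^+ u_-(x)$ and $u_-(x) = \lim_{t \to \infty} T_t^- u_+(x)$ is then immediate. If $x \in \mathcal I_{u_-}$, the defining equation combined with the first hypothesis gives $u_-(x) = \lim_{t \to \infty} T_t^+ u_-(x) = u_+(x)$; substituting into the second hypothesis yields $u_+(x) = u_-(x) = \lim_{t \to \infty} T_t^- u_+(x)$, so $x \in \mathcal I_{u_+}$. The reverse inclusion follows by switching the roles of $u_-$ and $u_+$.

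The hard part is the extraction of $\gamma_\infty$ and the subsequent action balance. Under only (C), (CON), (CER) and (LIP), the Lagrangian $L$ is not Tonelli and may take the value $+\infty$, so near-extremal curves satisfy no ODE and classical weak KAM arguments based on the Euler--Lagrange flow are unavailable. The regularity of $\gamma_t$ must be extracted purely from the variational formulas, using coercivity to bound speeds on bounded intervals and (LIP) to control the error $|T_{t-\tau}^+ u_- - \bar u_+|$ propagating along $\gamma_t$ as $t \to \infty$. This is the step where one leans most heavily on the already established semigroup structure of Propositions~\ref{m1}--\ref{S1}.
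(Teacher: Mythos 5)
This proposition is imported verbatim from \cite[Theorem 3]{NWY}; the present paper does not prove it, so there is no internal argument to compare against. Nonetheless, your coincidence part is correct and complete: if $x\in\mathcal I_{u_-}$ then $u_-(x)=\lim_t T^+_t u_-(x)=u_+(x)$ by the first hypothesis, and substituting into the second hypothesis gives $u_+(x)=\lim_t T^-_t u_+(x)$, so $x\in\mathcal I_{u_+}$; the reverse inclusion is symmetric. This is exactly the right elementary argument.

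The nonemptiness part, however, has a genuine gap at the final step, and it is precisely the step you flag as ``the hard part'' without resolving it. Writing $\bar u_+ := \lim_t T^+_t u_-\leq u_-$ and $w:= u_- - \bar u_+\geq 0$ with $m:=\min_M w$, your plan is to extract a limit curve $\gamma_\infty$ emanating from a minimizer $x_0$ and establish $w(\gamma_\infty(s))\equiv m$ for all $s\geq 0$. Even granting this (the diagonal extraction of near-maximizers and the passage to the limit in the self-referential integrand $L(\gamma, T^+_{t-\tau}u_-(\gamma),\dot\gamma)$ are themselves delicate under (LSC) only), constancy of $w$ along $\gamma_\infty$ does not yield $m=0$: any $\omega$-limit point $x_\infty$ satisfies $w(x_\infty)=m$, which is vacuous unless $m=0$ was already known. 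Your appeal to (LIP) to ``pin down $m=0$'' is not substantiated and in fact pulls in the wrong direction: the Lipschitz estimate of Proposition~\ref{psg}(2) propagates errors as $e^{\Theta t}$, and the error term $\int_0^t|L(\gamma_t, T^+_{t-\tau}u_-,\dot\gamma_t)-L(\gamma_t,\bar u_+,\dot\gamma_t)|\,d\tau \leq \Theta\int_0^t\|T^+_{t-\tau}u_- - \bar u_+\|_\infty\,d\tau$ is an integral over a growing interval of a quantity that merely tends to $0$, with no rate; it need not vanish. Some additional structural input is required --- in the classical setting one uses a conservation law along the Euler--Lagrange flow plus a minimizing-measure or LaSalle-type argument, and in the contact setting the companion paper \cite{NWY} supplies a dedicated argument --- and your sketch does not identify what replaces it.
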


\section{Some estimates on subsolutions}\label{bd}

In this section, we assume the existence of subsolutions of (\ref{hu}) and prove some {\it a priori} estimates on subsolutions. The  existence of subsolutions  will be verified for $c\geq c_0$ in Proposition \ref{exist-sub} below.

Let $L(x,\dot{x})$ be the Legendre transformation of $H(x,p)$. Let $T_t^{\pm}$  be the Lax-Oleinik semigroups associated to
\[L(x,\dot{x})-\lambda(x)u(x)+c.\]
Similar to {\cite[Proposition 2.1]{Ish3}}, one can  prove the local boundedness of $L(x,\dot x)$ in a neighborhood of the zero section of $TM$.
\begin{lemma}\label{CL}
Let $H(x,p)$ satisfy (C)(CON)(CER), there exist constants $\delta>0$ and $C_L>0$ such that the Lagrangian $L(x,\dot x)$ associated to $H(x,p)$ satisfies
\begin{equation}\label{clxx}
L(x,\xi)\leq C_L,\quad \forall (x,\xi)\in M\times \bar B(0,\delta).
\end{equation}
\end{lemma}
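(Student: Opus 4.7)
The plan is to upgrade the pointwise coercivity of $H$ to a uniform affine lower bound of the form $H(x,p)\geq \|p\|/R+H(x,0)$ for all $\|p\|\geq R$, using convexity, and then read off the required upper bound on $L$ near the zero section via convex duality.

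The main step is uniformizing coercivity: for each $N\in\R$, I claim there exists $R>0$, independent of $x\in M$, such that $H(x,q)\geq N$ whenever $\|q\|\geq R$. I would argue by contradiction. A counterexample produces sequences $x_n\to x_*$ and $p_n$ with $\|p_n\|\to\infty$ yet $H(x_n,p_n)\leq N$. Note that $H(x_n,0)$ is bounded by continuity of $H$ and compactness of $M$. For any $t\in[0,\|p_n\|]$, the point $tp_n/\|p_n\|$ is a convex combination of $0$ and $p_n$, so convexity of $H(x_n,\cdot)$ yields $H(x_n,tp_n/\|p_n\|)\leq\max(N,H(x_n,0))$, uniformly in $n$. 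Passing to a subsequence so that, in a local chart around $x_*$, the unit covectors $p_n/\|p_n\|$ converge to some $v_*$, continuity of $H$ forces $H(x_*,tv_*)$ bounded for every $t\geq 0$, which contradicts the pointwise coercivity of $H(x_*,\cdot)$.

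Applying the uniform coercivity just proved with $N:=\max_{x\in M}H(x,0)+1$ furnishes the promised $R>0$. For $\|p\|\geq R$ the decomposition $Rp/\|p\|=(R/\|p\|)p+(1-R/\|p\|)\cdot 0$ is a convex combination, so convexity of $H$ in $p$ and the choice of $N$ yield the affine lower bound $H(x,p)\geq \|p\|/R+H(x,0)$. Setting $\delta:=1/R$, for $\|\xi\|\leq\delta$ and $\|p\|\geq R$ one gets $\langle\xi,p\rangle_x-H(x,p)\leq -H(x,0)\leq -\min_{x\in M}H(x,0)$; on the complementary region $\|p\|\leq R$, the quantity $\langle\xi,p\rangle_x-H(x,p)$ is bounded using continuity of $H$ on the compact set $\{(x,p)\in T^*M:\|p\|\leq R\}$. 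Taking the supremum over $p$ produces $L(x,\xi)\leq C_L$ on $M\times\bar B(0,\delta)$, and $C_L$ may be enlarged to be positive.

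The only nontrivial obstacle is the uniformization of coercivity; the convexity hypothesis is essential at that step, since it allows the behaviour of $H$ along arbitrary escape sequences to be controlled by its behaviour along a single radial slice at the limit point. The remaining steps are routine applications of convex duality and compactness.
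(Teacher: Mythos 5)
Your proof is correct. The paper does not spell out an argument here: it simply defers to Ichihara--Ishii \cite[Lemma~2.3]{Ish2}, and your argument is precisely the standard convex-duality proof that reference supplies, with one worthwhile addition. In \cite{Ish2} coercivity is posited in the uniform form $\lim_{r\to\infty}\inf\{H(x,p):\|p\|\ge r\}=+\infty$, whereas the hypothesis (CER) of the present paper is stated pointwise; your contradiction argument using the convex combination $tp_n/\|p_n\|=(t/\|p_n\|)p_n+(1-t/\|p_n\|)\cdot 0$ to pin down bounded values of $H(x_*,\cdot)$ along the whole ray $\{tv_*:t\ge 0\}$ is exactly the right way to upgrade pointwise to uniform coercivity, and it does genuinely require the convexity hypothesis (with only continuity, the "valley" of $H(x_n,\cdot)$ could drift to infinity as $x_n\to x_*$). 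The remainder -- extracting the affine lower bound $H(x,p)\ge\|p\|/R+H(x,0)$ for $\|p\|\ge R$ by convexity, taking $\delta=1/R$ so the linear terms cancel on the outer region, and bounding the inner region by compactness -- matches the cited reference and is carried out cleanly.
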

Throughout this paper, we define
\begin{equation}\label{muuu}
\mu:=\textrm{diam}(M)/\delta,
\end{equation}
where $\textrm{diam}(M)$ denotes the diameter of $M$.
\begin{lemma}\label{ubd}
Let $\varphi\in C(M)$.
\begin{itemize}
\item [(1)]
$T^-_t\varphi$ has an upper bound independent of $t$;
\item [(2)] $T^+_t \varphi$ has a lower bound independent of $t$.
\end{itemize}
\end{lemma}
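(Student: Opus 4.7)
The two claims are dual under Proposition \ref{rela}: applying it to the backward semigroup $\bar T^-$ of $H(x,-p)-\lambda(x)u-c$, whose $u$-coefficient is $-\lambda$ and hence positive at $x_2$, converts an upper bound for $\bar T^-$ at $x_2$ into a lower bound for $T^+_t\varphi$ at $x_2$. So the problem reduces to establishing uniform upper bounds for backward semigroups of this type of contact Hamiltonian at a point where the $u$-coefficient is positive, and then propagating them over $M$.

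The pointwise estimate at $x_1$ is a Gronwall-type ODE argument. Plugging the constant curve $\gamma\equiv x_1$ into the variational formula and using the semigroup identity $T^-_{t+h}\varphi=T^-_h(T^-_t\varphi)$ gives
\[
T^-_{t+h}\varphi(x_1)-T^-_t\varphi(x_1)\leq h\bigl(L(x_1,0)+c\bigr)-\lambda(x_1)\int_t^{t+h}T^-_s\varphi(x_1)\,ds.
\]
Since $s\mapsto T^-_s\varphi(x_1)$ is continuous by Proposition \ref{m1} and $\lambda(x_1)>0$, comparison with the linear ODE $y'=L(x_1,0)+c-\lambda(x_1)y$ gives
\[
T^-_t\varphi(x_1)\leq \max\Bigl(\varphi(x_1),\ \tfrac{L(x_1,0)+c}{\lambda(x_1)}\Bigr),\qquad\forall\,t\geq 0,
\]
and the same ODE argument at $x_2$ for the dual operator $\bar T^-$ handles part (2) at $x_2$.

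To turn these pointwise estimates into uniform bounds on all of $M$, I use the semigroup decomposition $T^-_t\varphi(x)=T^-_\mu(T^-_{t-\mu}\varphi)(x)$ for $t\geq\mu$ and test against a curve $\sigma:[0,\mu]\to M$ joining $x_1$ to $x$ with $\|\dot\sigma\|\leq\delta$, so that $L(\sigma,\dot\sigma)\leq C_L$ by Lemma \ref{CL}. This yields
\[
T^-_t\varphi(x)\leq T^-_{t-\mu}\varphi(x_1)+\int_0^\mu\bigl[L(\sigma,\dot\sigma)+c-\lambda(\sigma)T^-_{t-\mu+\tau}\varphi(\sigma(\tau))\bigr]d\tau,
\]
and symmetrically a lower bound for $T^+_t\varphi(x)$ using $x_2$ and $-\lambda$.

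The main obstacle is the remaining integrand: it evaluates $T^-\varphi$ at points of $\sigma$ other than $x_1$, making the estimate self-referential. The plan for closing it is to bootstrap in time, coupling the upper bound from part (1) with the matching lower bound from part (2) and iterating on intervals of length $\mu$, with the constants arranged so that the geometric factor $\|\lambda\|_\infty\mu$ does not defeat the estimate. This coupled propagation step, rather than the pointwise Gronwall argument, is the delicate technical heart of the proof.
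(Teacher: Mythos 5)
Your pointwise estimate at $x_1$ and the duality reduction of part (2) are sound and consistent with the paper's strategy, but the propagation step—which you correctly identify as ``the delicate technical heart''—is where your plan diverges and, as written, would not close.

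The paper does \emph{not} couple the upper bound for $T^-_t$ with the lower bound for $T^+_t$, nor does it iterate on time intervals of length $\mu$: the two parts of the lemma are proved entirely independently, and the estimate is obtained along a single geodesic of time length $\mu$. The key idea you are missing is a \emph{crossing-time argument}. Write $K_0:=\max\bigl\{\varphi(x_1),\ (L(x_1,0)+c)/\lambda(x_1)\bigr\}$, so $T^-_t\varphi(x_1)\leq K_0$ for all $t$. Given $x$, take the geodesic $\alpha:[0,\mu]\to M$ from $x_1$ to $x$ with speed $\leq\delta$. If $T^-_{t+\mu}\varphi(x)>K_0$, then since $T^-_t\varphi(\alpha(0))\leq K_0$, there is a \emph{last crossing time} $\sigma\in[0,\mu)$ with $T^-_{t+\sigma}\varphi(\alpha(\sigma))=K_0$ and $T^-_{t+s}\varphi(\alpha(s))>K_0$ for all $s\in(\sigma,\mu]$. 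Integrating the dynamic programming inequality along $\alpha$ only from $\sigma$ onward and splitting $-\lambda(\alpha(\tau))T^-_{t+\tau}\varphi(\alpha(\tau))=-\lambda(\alpha(\tau))K_0-\lambda(\alpha(\tau))\bigl[T^-_{t+\tau}\varphi(\alpha(\tau))-K_0\bigr]$, the bracket is now known to be \emph{nonnegative} on $(\sigma,s]$, so you may bound $-\lambda(\alpha(\tau))[\cdot]\leq\lambda_0[\cdot]$ with $\lambda_0:=\|\lambda\|_\infty$ and apply Gronwall on $[\sigma,s]$ directly. This yields $T^-_{t+\mu}\varphi(x)\leq K_0+L_0\mu e^{\lambda_0\mu}$ with $L_0:=C_L+\lambda_0K_0+c$, uniformly in $t$ and $x$, with no bootstrap and no reference to part (2).

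Your sketched alternative—coupling $T^-$ from above with $T^+$ from below—does not obviously constrain $T^-$ from below (a lower bound on $T^+_t\varphi$ says nothing about $T^-_t\varphi$ on its own), so the self-referential term $-\lambda\cdot T^-_{t-\mu+\tau}\varphi(\sigma(\tau))$ remains uncontrolled where $\lambda<0$ and $T^-\varphi$ might be large. The crossing-time device is precisely what removes this obstruction: by starting the Gronwall estimate only after the last time the value drops to $K_0$, the sign of $T^-_{t+\tau}\varphi-K_0$ is pinned down on the remaining subinterval, and the exponential factor $e^{\lambda_0\mu}$ is harmless because $\mu$ is a fixed constant depending only on $M$ and $\delta$. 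You should replace the coupled bootstrap plan with this crossing-time and Gronwall step.
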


\begin{proof}
Take $x_1\in M$ with $\lambda (x_1)>0$. We first show
\begin{equation*}
T^-_t\varphi(x_1)\leq \max\bigg\{\varphi(x_1),\frac{L(x_1,0)+c}{\lambda(x_1)}\bigg\},\quad \forall t\geq 0.
\end{equation*}
Otherwise, there is $t>0$ such that
\begin{equation*}
T^-_t\varphi(x_1)> \max\bigg\{\varphi(x_1),\frac{L(x_1,0)+c}{\lambda(x_1)}\bigg\}\geq \frac{L(x_1,0)+c}{\lambda(x_1)}.
\end{equation*}
There are two cases:

(i) For all $s\in [0,t]$, we have
\begin{equation*}
T^-_s\varphi(x_1)> \frac{L(x_1,0)+c}{\lambda(x_1)}.
\end{equation*}
Take the constant curve $\gamma\equiv x_1$, we have
\begin{equation*}
T^-_t\varphi(x_1)\leq \varphi(x_1)+\int_0^t\bigg[L(x_1,0)+c-\lambda(x_1)T^-_s\varphi(x_1)\bigg]ds<\varphi(x_1),
\end{equation*}
which also leads to a contradiction.

(ii) There is $t_0\geq 0$ such that
\begin{equation*}
T^-_{t_0}\varphi(x_1)=\frac{L(x_1,0)+c}{\lambda(x_1)},
\end{equation*}
and
\begin{equation*}
T^-_s\varphi(x_1)> \frac{L(x_1,0)+c}{\lambda(x_1)},\quad \forall s\in (t_0,t].
\end{equation*}
Take the constant curve $\gamma\equiv x_1$, we have
\begin{equation*}
T^-_t\varphi(x_1)\leq T^-_{t_0}\varphi(x_1)+\int_0^t\bigg[L(x_1,0)+c-\lambda(x_1)T^-_s\varphi(x_1)\bigg]ds<\frac{L(x_1,0)+c}{\lambda(x_1)},
\end{equation*}
which leads to a contradiction.

We then prove that for all $x\in M$ and all $t>0$, $T_{t}^-\varphi(x)$ is bounded from above. It suffices to prove that for all $x\in M$ and  $t>0$, $T_{t+\mu}^-\varphi(x)$ is bounded from above, where $\mu$ is given by (\ref{muuu}). Let $\alpha:[0,\mu]\rightarrow M$ be a geodesic connecting $x_1$ and $x$ with constant speed, then $\|\dot \alpha\|\leq \delta$. Let
 \[K_0:=\max\bigg\{\varphi(x_1),\frac{L(x_1,0)+c}{\lambda(x_1)}\bigg\}.\]

Given $x\neq x_1$.  We assume $T^-_{t+\mu}\varphi(x)>K_0$. Otherwise the proof is completed. Since $T^-_{t}\varphi(x_1)\leq K_0$, there exists $\sigma\in[0,\mu)$ such that $T^-_{{t}+\sigma}\varphi(\alpha(\sigma))=K_0$ and  $T^-_{{t}+s}\varphi(\alpha(s))>K_0$ for all $s\in (\sigma,\mu]$. By definition
\begin{equation*}
\begin{aligned}
  T^-_{{t}+s}\varphi(\alpha(s))&\leq T^-_{{t}+\sigma}\varphi(\alpha(\sigma))+\int_\sigma^s \bigg[L(\alpha(\tau),\dot \alpha(\tau))-\lambda(\alpha(\tau))\cdot T^-_{{t}+\tau}\varphi(\alpha(\tau))+c\bigg]d\tau
  \\ &=K_0+\int_\sigma^s \bigg[L(\alpha(\tau),\dot \alpha(\tau))-\lambda(\alpha(\tau))\cdot T^-_{{t}+\tau}\varphi(\alpha(\tau))+c\bigg]d\tau,
\end{aligned}
\end{equation*}
which implies
\begin{equation*}
\begin{aligned}
  &T^-_{{t}+s}\varphi(\alpha(s))-K_0
  \leq \int_\sigma^s \bigg[L(\alpha(\tau),\dot \alpha(\tau))-\lambda(\alpha(\tau))\cdot T^-_{{t}+\tau}\varphi(\alpha(\tau))+c\bigg]d\tau
  \\ &\leq \int_\sigma^s \bigg[L(\alpha(\tau),\dot \alpha(\tau))-\lambda(\alpha(\tau))\cdot K_0+c\bigg]d\tau
  +\lambda_0\int_\sigma^s\bigg[T^-_{{t}+\tau}\varphi(\alpha(\tau))-K_0\bigg]d\tau
  \\ &\leq L_0\mu+\lambda_0\int_\sigma^s\bigg[T^-_{{t}+\tau}\varphi(\alpha(\tau))-K_0\bigg]d\tau,
\end{aligned}
\end{equation*}
where $\lambda_0$ is given by (\ref{v-norm}) and
\begin{equation*}
  L_0:=C_L+\lambda_0 K_0+c.
\end{equation*}
where $C_L$ is given by (\ref{clxx}).
By the Gronwall inequality, we have
\begin{equation*}
  T^-_{{t}+s}\varphi(\alpha(s))-K_0\leq L_0\mu e^{\lambda_0(s-\sigma)}\leq L_0\mu e^{\lambda_0\mu},\quad \forall s\in(\sigma,\mu].
\end{equation*}
Take $s=\mu$ we have $T^-_{{t}+\mu}\varphi(x)\leq K_0+L_0\mu e^{\lambda_0\mu}$.

Similar to the argument above, by choosing constant curve $\gamma(\tau)\equiv x_2$ with $\tau\in [0,t]$ and replacing $T_{t+\mu}^-\varphi$ by $T_{t-\mu}^+\varphi$, one has
\begin{equation}\label{tupp3}
T_{t}^+\varphi(x) \geq \min\bigg\{\varphi(x_2),\frac{L(x_2,0)+c}{\lambda(x_2)}\bigg\}-L_0\mu e^{\lambda_0\mu}.
\end{equation}
This completes the proof.
\end{proof}

\begin{corollary}\label{back-bd}
Let $u_0$ be a Lipschitz subsolution of (\ref{hu}). Then $T_{t}^{-}u_0$ (resp. $T_{t}^{+}u_0$)  has an upper (resp. lower) bound independent of $t$ and $u_0$.
\end{corollary}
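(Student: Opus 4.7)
The plan is to leverage Proposition \ref{eudomii}, which for a Lipschitz subsolution $u_0$ of (\ref{hu}) yields the sandwich $T_t^-u_0\geq u_0\geq T_t^+u_0$ for all $t\geq 0$, in order to make the quantity $K_0$ appearing in the proof of Lemma \ref{ubd} independent of $u_0$. Recall that the upper bound obtained there is of the form $K_0+L_0\mu e^{\lambda_0\mu}$ with $K_0=\max\{\varphi(x_1),(L(x_1,0)+c)/\lambda(x_1)\}$; since the second element of the maximum is intrinsic data, the task reduces to bounding $u_0(x_1)$ from above by a universal constant.

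My plan for this key step is a contradiction argument. Suppose $u_0(x_1)>(L(x_1,0)+c)/\lambda(x_1)$. Testing the definition of $T_t^-u_0(x_1)$ against the constant curve $\gamma\equiv x_1$ and inserting $T_s^-u_0(x_1)\geq u_0(x_1)$ together with $\lambda(x_1)>0$ would give
\[
T_t^-u_0(x_1)\leq u_0(x_1)+t\bigl[L(x_1,0)+c-\lambda(x_1)u_0(x_1)\bigr].
\]
The bracket is strictly negative by assumption, so $T_t^-u_0(x_1)<u_0(x_1)$ for $t$ sufficiently large, contradicting $T_t^-u_0\geq u_0$. Hence $u_0(x_1)\leq (L(x_1,0)+c)/\lambda(x_1)$, an estimate independent of $u_0$.

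Feeding this back into the Gronwall step of Lemma \ref{ubd}, carried out along a geodesic from $x_1$ to any $x\in M$ with speed at most $\delta$, then delivers an upper bound on $T_t^-u_0(x)$ depending only on $M$, $H$, $\lambda$, and $c$, as required. The $T_t^+$ statement is completely symmetric: the other half of the sandwich $u_0\geq T_t^+u_0$ combined with the constant curve at $x_2$ and $\lambda(x_2)<0$ forces $u_0(x_2)\geq (L(x_2,0)+c)/\lambda(x_2)$, after which the lower bound \eqref{tupp3} in the proof of Lemma \ref{ubd} becomes uniform in $u_0$.

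I do not anticipate substantive obstacles: the heavy semigroup analysis is already packaged inside Lemma \ref{ubd}, and the subsolution hypothesis enters at exactly one point, through Proposition \ref{eudomii}, to upgrade $u_0(x_1)$ to the fixed constant $(L(x_1,0)+c)/\lambda(x_1)$.
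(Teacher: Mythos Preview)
Your argument is correct, and the overall architecture matches the paper's: reduce to a universal upper bound on $u_0(x_1)$, then feed that into the Gronwall step of Lemma~\ref{ubd}. The point of departure is how that bound on $u_0(x_1)$ is obtained. The paper argues directly from the pointwise subsolution inequality: for any reachable gradient $p\in D^*u_0(x_1)$ one has $H(x_1,p)+\lambda(x_1)u_0(x_1)\le c$, whence $u_0(x_1)\le (c-\mathbf{e}_0)/\lambda(x_1)$ with $\mathbf{e}_0=\min_{T^*M}H$. You instead invoke the semigroup sandwich $T_t^-u_0\ge u_0$ from Proposition~\ref{eudomii} and play it against the constant-curve estimate at $x_1$, which forces $u_0(x_1)\le (L(x_1,0)+c)/\lambda(x_1)$; note incidentally that the contradiction already occurs for every $t>0$, not just for $t$ large. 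Your route is a bit more dynamical and has the minor cosmetic advantage that the resulting $K_0$ collapses to the single number $(L(x_1,0)+c)/\lambda(x_1)$ with no max needed; the paper's route is more elementary in that it never touches the semigroup and reads the bound straight off the definition of subsolution. Either way the remaining Gronwall-along-a-geodesic computation and the symmetric $T_t^+$ argument at $x_2$ are identical.
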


\begin{proof}
We only prove $T_{t}^{-}u_0$  has an upper  bound independent of $t$ and $u_0$. The case with $T_{t}^{+}u_0$ is similar. Let
\begin{equation}\label{e_0}
\mathbf{e}_0:=\min_{(x,p)\in T^*M}H(x,p).
\end{equation}
By (CER), $\mathbf{e}_0$ is finite.
By the definition of the subsolution,
$H(x_1,p )+ \lambda(x_1)u_0(x_1) \leqslant c$ for any $p\in D^*u_0(x_1)$, where $D^*$ denotes the reachable gradients. It implies
 $$
 \lambda(x_1)u_0(x_1) \leqslant c-\min_{(x,p )\in T^*M}  H(x,p )= c-\mathbf{e}_0.
 $$
 Hence, for each subsolution $u_0$, we have
\begin{align*}
u_0(x_1)\leq \frac{c-\mathbf{e}_0}{\lambda(x_1)}.
\end{align*}
Let  \[K_0:=\frac{c-\mathbf{e}_0}{\lambda(x_1)},\quad L_0:=C_L+\lambda_0 K_0+c,\]
where $\lambda_0$ is given by (\ref{v-norm}). Here we note that
\[L(x_1,0)+c=\sup_{p\in T^*_xM}(-H(x_1,p))+c\leq -\min_{(x,p)\in T^*M}H(x,p)+c=c-\mathbf{e}_0.\]
By Lemma \ref{ubd}, we have
\begin{equation}\label{tupp2}
T_{t}^-u_0(x) \leqslant  K_0+L_0\mu e^{\lambda_0\mu}.
\end{equation}
This completes the proof.
\end{proof}

\begin{proposition}\label{uibo}
	There exists a constant $C>0$ such that for any subsolution $u$ of (\ref{hu}), there holds
      \[
      \|u\|_{W^{1,\infty}}\leqslant C.
      \]
\end{proposition}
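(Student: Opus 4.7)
The plan is to first extract a uniform $L^\infty$-bound on every subsolution of (\ref{hu}), and then use the coercivity of $H$ to upgrade this into a $W^{1,\infty}$-bound. Since $u$ is continuous on the compact manifold $M$, it is bounded, and the subsolution inequality combined with (CER) yields a standard viscosity-theoretic observation: any bounded continuous subsolution of a coercive HJ equation is automatically Lipschitz, because $H(x,p)\leq c+\|\lambda\|_\infty\|u\|_\infty$ forces, via (CER), a pointwise bound on every $p\in D^+u(x)$. Hence $u\in \mathrm{Lip}(M)$, albeit with a Lipschitz constant that at this stage may still depend on $\|u\|_\infty$.

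With $u$ now a Lipschitz subsolution, I apply Proposition \ref{eudomii} to the Hamiltonian $\tilde H(x,u,p):=H(x,p)+\lambda(x)u-c$, which satisfies (CON), (CER) and (LIP) with constant $\Theta=\|\lambda\|_\infty$. This gives
\[T_t^+u(x)\leq u(x)\leq T_t^-u(x),\qquad \forall (x,t)\in M\times[0,+\infty).\]
Corollary \ref{back-bd} supplies an upper bound $K_0+L_0\mu e^{\lambda_0\mu}$ on $T_t^-u$ that depends only on the data of (\ref{hu}), not on the particular subsolution $u$. A symmetric argument using $T_t^+$ and the point $x_2$ with $\lambda(x_2)<0$ produces a matching lower bound (as in (\ref{tupp3})), so there exists $C_0>0$ independent of $u$ with $\|u\|_\infty\leq C_0$.

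Finally, I feed this uniform $L^\infty$-bound back into the subsolution inequality: for every $p\in D^*u(x)$,
\[H(x,p)\leq c+\|\lambda\|_\infty C_0,\]
and a last appeal to (CER) produces $R>0$ independent of $u$ with $\|p\|\leq R$, whence $\|Du\|_\infty\leq R$. Setting $C:=C_0+R$ concludes the proof. The principal obstacle is the middle stage: because the equation is non-monotone in $u$, the uniform $L^\infty$-bound does not come from a standard comparison principle, and one must exploit the sign-changing assumption $(\pm)$ through the Gronwall propagation of Lemma \ref{ubd} to transport the pointwise bounds at $x_1$ and $x_2$ into global ones. Once this is secured, Stages~1 and~3 are routine consequences of coercivity.
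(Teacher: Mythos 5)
Your proof is correct, and its overall architecture (first a uniform $L^\infty$-bound from Proposition \ref{eudomii} and Corollary \ref{back-bd}, then a uniform gradient bound) matches the paper's; the difference lies in how the gradient bound is extracted. The paper works on the Lagrangian side: it uses condition (2) of Proposition \ref{eudomii}, namely $u\prec L$, together with Lemma \ref{CL} (local boundedness of $L$ on $M\times\bar B(0,\delta)$), integrating the domination inequality along slow geodesics to get $u(y)-u(x)\leq \kappa\, d(x,y)$ with $\kappa=\frac{1}{\delta}(C_L+\lambda_0\max\{|C_1|,|C_2|\}+c)$. You instead work on the Hamiltonian side: feed the uniform bound $\|u\|_\infty\leq C_0$ into the subsolution inequality $H(x,p)\leq c+\lambda_0 C_0$ for $p\in D^*u(x)$, and invoke (CER) to bound $\|p\|$. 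By Legendre duality these are two faces of the same coercivity fact, but your route bypasses Lemma \ref{CL} and the geodesic construction entirely, which is slightly more economical. One small point worth keeping explicit (as you do in Stage~1) is that Proposition \ref{eudomii} is stated for $\varphi\in\mathrm{Lip}(M)$, so one must first upgrade the continuous subsolution to Lipschitz via coercivity before the domination/semigroup machinery applies; the paper treats this as understood (it appears explicitly only in the proof of Proposition \ref{ns}, via the reference to \cite[Proposition 1.14]{Ishii_chapter}).
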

\begin{proof}
 	By Proposition \ref{eudomii}, for each $t\geq 0$,
\[T_t^+u\leq u \leq T_t^-u.\]
 By Corollary \ref{back-bd}, there exist $C_1$, $C_2$ independent of $u$ such that
 \[C_2\leq u\leq C_1.\]

 For each $x,y\in M$, let $\alpha:[0,d(x,y)/\delta]\rightarrow M$ be a geodesic of length $d(x,y)$ with constant speed $\|\dot \alpha\|=\delta$ and connecting $x$ and $y$, where $d(x,y)$ denotes the distance between $x$ and $y$ induced by the Riemannian metric $g$ on $M$. Then
\begin{equation*}
  L(\alpha(s),\dot \alpha (s))\leq C_L,\quad \forall s\in [0,{d(x,y)/\delta}].
  \end{equation*}
By Proposition \ref{eudomii},
\begin{equation*}
\begin{aligned}
  u(y)-u(x)&\leq \int_0^{d(x,y)/\delta}\bigg[L(\alpha(s),\dot{\alpha}(s))-\lambda(\alpha(s))u(\alpha(s))+c\bigg]ds
  \\ &\leq \frac{1}{\delta}\bigg(C_L+\lambda_0\max\{|C_1|,|C_2|\}+c\bigg) d(x,y)=:\kappa d(x,y).
\end{aligned}
\end{equation*}
Note that $\kappa$ is independent of the choice of the subsolution $u$. We get the equi-Lipschitz continuity of $u$ by exchanging the role of $x$ and $y$.
\end{proof}

\begin{proposition}\label{exist-sol}
Let $u_0$ be a Lipschitz subsolution of (\ref{hu}). Then
\[
u_-:=\lim_{t\to +\infty}{ T_t^{-} }u_0(x), \quad u_+:=\lim_{t\to +\infty}{ T_{t}^{+}}u_0(x)
\]
exist, and the limit procedure is uniform in $x$. Moreover, $u_-$ is a  solution of (\ref{hu}), and $u_+$ is a forward weak KAM solution of (\ref{hu}). In particular, (\ref{hu}) has a solution $u_-$ for $c\geq c_0$.
\end{proposition}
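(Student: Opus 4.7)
My plan is to exploit the monotone behaviour of the Lax--Oleinik semigroups on the cone of Lipschitz subsolutions. Since $u_0$ is a Lipschitz subsolution of (\ref{hu}), Proposition~\ref{eudomii} yields $T_t^-u_0\geq u_0$ and $T_t^+u_0\leq u_0$ for every $t\geq 0$. Combining this with the order preservation of $T_t^\pm$ (obtained from Proposition~\ref{psg}(1) by a limiting argument, or directly from the Lipschitz estimate in Proposition~\ref{psg}(2)) and the semigroup property $T_{t+s}^-=T_s^-\circ T_t^-$, one finds that $t\mapsto T_t^-u_0(x)$ is nondecreasing while $t\mapsto T_t^+u_0(x)$ is nonincreasing. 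Corollary~\ref{back-bd} then sandwiches both families between constants independent of $t$, so the pointwise limits $u_\pm(x):=\lim_{t\to+\infty}T_t^\pm u_0(x)$ exist.

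To promote pointwise to uniform convergence I would establish that $\{T_t^-u_0\}_{t\geq 0}$ is equi-Lipschitz. For any fixed $t\geq 0$, Proposition~\ref{m1} gives $T_t^-u_0\in\mathrm{Lip}(M)$, and the monotonicity $T_s^-(T_t^-u_0)=T_{t+s}^-u_0\geq T_t^-u_0$ together with Proposition~\ref{eudomii} identifies $T_t^-u_0$ as itself a Lipschitz subsolution of (\ref{hu}). Since the Lipschitz bound supplied by Proposition~\ref{uibo} depends only on the data of the problem and not on the chosen subsolution, the family $\{T_t^-u_0\}_{t\geq 0}$ is equi-Lipschitz. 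Coupled with monotone pointwise convergence on the compact manifold $M$, this yields uniform convergence (via Dini, or Arzel\`{a}--Ascoli combined with uniqueness of the monotone limit), and the limit $u_-$ is Lipschitz; the symmetric reasoning handles $u_+$.

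Identification of $u_-$ as a viscosity solution is then a soft passage to the limit inside the semigroup. Using the contraction estimate of Proposition~\ref{psg}(2), for any fixed $s\geq 0$,
\[\|T_s^-u_--T_{t+s}^-u_0\|_\infty=\|T_s^-u_--T_s^-(T_t^-u_0)\|_\infty\leq e^{\Theta s}\,\|u_--T_t^-u_0\|_\infty\to 0\]
as $t\to+\infty$, while $T_{t+s}^-u_0\to u_-$ uniformly too. Hence $T_s^-u_-=u_-$ for every $s\geq 0$, and Proposition~\ref{eqvvv} concludes that $u_-$ is a viscosity solution of (\ref{hu}); the symmetric manipulation shows $u_+$ is a fixed point of $T_s^+$ and therefore a forward weak KAM solution. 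The ``in particular'' clause follows at once by invoking the forthcoming Proposition~\ref{exist-sub} to supply a Lipschitz subsolution whenever $c\geq c_0$.

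The delicate point I anticipate is the equi-Lipschitz step: without the Tonelli conditions one cannot regularise minimizers of $T_t^-u_0$ via contact Hamilton's equations, so the cleanest route is to feed $T_t^-u_0$ back into the \emph{a priori} Lipschitz bound of Proposition~\ref{uibo} through the semigroup characterisation of subsolutions in Proposition~\ref{eudomii}; once equi-Lipschitzness is in hand, monotonicity and continuous dependence on initial data make the remaining identification of the limit essentially automatic.
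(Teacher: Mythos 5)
Your argument is correct, and it takes a genuinely different route from the paper's. The paper proves $u_-:=\lim_{t\to+\infty}T_t^-u_0$ is a solution by invoking the half-relaxed-limit result Proposition~\ref{S1}(1): the lower half limit $\check u_-$ is a Lipschitz viscosity solution, and monotonicity of $t\mapsto T^-_t u_0$ forces $T^-_t u_0\leq\check u_-$ for all $t$, hence the pointwise limit equals $\check u_-$; Dini then upgrades to uniform convergence. You bypass the half-relaxed-limit machinery entirely: you observe that each $T_t^-u_0$ is itself a Lipschitz subsolution (via monotonicity and Proposition~\ref{eudomii}), so the \emph{a priori} estimate of Proposition~\ref{uibo} makes the whole family equi-Lipschitz, whereupon Dini gives uniform convergence and the contraction estimate of Proposition~\ref{psg}(2) lets you pass to the limit in $T_s^-\circ T_t^- u_0 = T_{t+s}^-u_0$ to identify $u_-$ as a fixed point, hence a solution by Proposition~\ref{eqvvv}. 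Your route buys a more self-contained and arguably more transparent identification of the limit (fixed point, rather than coincidence with a barrier); the paper's route is shorter because Proposition~\ref{S1} packages the hard work. One small caveat: you should note that showing $T_t^-u_0$ is a subsolution from the one-sided inequality $T_s^-(T_t^-u_0)\geq T_t^-u_0$ relies on the implication proved as the third lemma in Appendix~\ref{subsolution}, rather than on the literal statement of Proposition~\ref{eudomii}(3), which asserts the two-sided condition; the symmetric inequality $T_s^+(T_t^-u_0)\leq T_t^-u_0$ is not an immediate consequence of what you have, since the two semigroups do not commute.
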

\begin{proof}
We only prove that $u_-:=\lim_{t\to +\infty}{ T_t^{-} }u_0(x)$ exits, and it is a viscosity solution of (\ref{hu}). The existence of $u_+$ is similar . By Proposition \ref{S1}
\[\check{u}_-(x):=\lim_{r\rightarrow 0+}\inf\{T^-_tu_0(y):\ d(x,y)<r,\ t>1/r\}\]
is a solution of (\ref{hu}). By Proposition \ref{eudomii}(3) and Corollary \ref{back-bd}, for a given $x\in M$, the limit $\lim_{t\rightarrow+\infty}T^-_tu_0(x)$ exists. By definition, we have\[\check{u}_-(x)\leq \lim_{t\rightarrow+\infty}T^-_tu_0(x).\] Using Proposition \ref{eudomii}(3) again,  $T^-_tu_0$ is increasing in $t$ for all $t>0$, we have
\begin{equation*}
  \begin{aligned}
  T^-_t u_0(x)&=\lim_{r\rightarrow 0+} \inf\{T^-_tu_0(y):\ d(x,y)<r\}
  \\ &\leq \lim_{r\rightarrow 0+} \inf\{{T^-_{t+s}} u_0(y):\ d(x,y)<r,\ t+s>1/r\}=\check{u}_-(x).
  \end{aligned}
\end{equation*}
Then $\lim_{t\rightarrow+\infty}T^-_tu_0=\check{u}_-$. Note that $\check{u}_-$ is a  solution of (\ref{hu}).  By the Dini theorem, the family $\{T^-_t u_0\}_{t>0}$ uniformly converges to $\check{u}_-$.
\end{proof}

\section{Structure of the solution set of (\ref{hu})}\label{pthee2}
Let $\mathcal{S}_-$ (resp. $\mathcal{S}_+$) be the set of all solutions (resp. forward weak KAM solution) of (\ref{hu}).
\subsection{The maximal solution}\label{above2}

 We first prove the existence of the maximal solution. 
Since each solution is a subsolution, by Proposition \ref{uibo}, there are $C_1$ and $C_2$ such that $C_2\leq u_-\leq C_1$ for all $u_-\in\mathcal S_-$. Note that all solutions of (\ref{hu}) are fixed points of $T^-_t$. Take a {continuous} function $\varphi>C_1$ as the initial data. By Proposition \ref{psg} (1), $T^-_t\varphi$ is larger than every solution of (\ref{hu}). By Lemma  \ref{ubd}(1), $T^-_t\varphi$ has an upper bound independent of $t$. By Proposition \ref{S1} (1), the lower half limit
\[\check{\varphi}(x)=\lim_{r\rightarrow 0+}\inf\{T^-_t\varphi(y):\ d(x,y)<r,\ t>1/r\}\]
is a Lipschitz continuous viscosity solution of (\ref{hu}). Since $T^-_t\varphi$ is larger than every solution of (\ref{hu}), we have
\begin{equation*}
  \begin{aligned}
  \check{\varphi}(x)&=\lim_{r\rightarrow 0+}\inf\{T^-_t\varphi(y):\ d(x,y)<r,\ t>1/r\}
  \\ &\geq \lim_{r\rightarrow 0+}\inf\{v_-(y):\ d(x,y)<r\}=v_-(x),
  \end{aligned}
\end{equation*}
for all $v_-\in\mathcal S_-$. Thus, $\check{\varphi}(x)$ is the maximal solution of (\ref{hu}).

\subsection{The minimal solution}\label{below2}

  Since each forward weak KAM solution is dominated by $L(x,\dot x)-\lambda (x)u+c$, by Proposition \ref{eqvvv}, it is a subsolution of (\ref{hu}). By Proposition \ref{uibo}, there are $C_1$ and $C_2$ such that $C_2\leq u_+\leq C_1$ for all $u_+\in\mathcal S_+$. Take a {continuous} function $\varphi<C_2$ as the initial data. By Proposition \ref{psg} (1), $T^+_t\varphi$ is smaller than every forward weak KAM solution of (\ref{hu}). By Lemma  \ref{ubd}(2), $T^+_t\varphi$ has a lower bound independent of $t$. By Proposition \ref{S1} (2), the upper half limit
\[\hat{\varphi}(x)=\lim_{r\rightarrow 0+}\sup\{T^+_t\varphi(y):\ d(x,y)<r,\ t>1/r\}\]
is a forward weak KAM solution of (\ref{hu}). Since $T^+_t\varphi$ is smaller than every forward weak KAM solutions of (\ref{hu}), we have
\begin{equation*}
  \begin{aligned}
  \hat{\varphi}(x)&=\lim_{r\rightarrow 0+}\sup\{T^+_t\varphi(y):\ d(x,y)<r,\ t>1/r\}
  \\ &\leq \lim_{r\rightarrow 0+}\sup\{v_+(y):\ d(x,y)<r\}=v_+(x).
  \end{aligned}
\end{equation*}
for all $v_+\in\mathcal S_+$. Thus, $\hat{\varphi}(x)$ is the minimal forward weak KAM solution of (\ref{hu}). By Proposition \ref{S1} (4), $\hat{\varphi}_\infty:=\lim_{t\rightarrow +\infty}T^-_t \hat{\varphi}$ exists, and it is a solution of (\ref{hu}).  


\begin{lemma}\label{minlemm}
$\hat{\varphi}_\infty$ is the minimal solution of (\ref{hu}).
\end{lemma}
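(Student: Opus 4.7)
The plan is to prove that $\hat{\varphi}_\infty\leq u_-$ for every viscosity solution $u_-$ of (\ref{hu}), using a sandwich: interpose a forward weak KAM solution between $\hat{\varphi}$ and $u_-$, then push the resulting inequality forward through $T^-_t$. Since $\hat{\varphi}_\infty$ is already known to be a solution (invoked via Proposition \ref{S1}(4) just before the lemma), establishing this inequality completes the minimality claim.

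First I would descend from $u_-$ to a forward weak KAM solution below it. Proposition \ref{S1}(3) provides exactly this: the pointwise limit $u_+:=\lim_{t\to+\infty}T^+_tu_-$ exists, is a forward weak KAM solution of (\ref{hu}), and satisfies $T^+_tu_-\leq u_-$ for every $t\geq 0$; passing to the limit gives $u_+\leq u_-$. Because $\hat{\varphi}$ was shown in Section \ref{below2} to be the minimal forward weak KAM solution of (\ref{hu}), I get the chain
\[
\hat{\varphi}(x)\leq u_+(x)\leq u_-(x),\qquad \forall x\in M.
\]

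Next I would propagate this inequality through the backward Lax--Oleinik semigroup. The tool is monotonicity of $T^-_t$. Proposition \ref{psg}(1) only states strict monotonicity from strict inequality, so I would first upgrade to the non-strict version via a routine approximation: given $\hat{\varphi}\leq u_-$, for each $\eps>0$ we have $\hat{\varphi}-\eps<u_-$, so Proposition \ref{psg}(1) yields $T^-_t(\hat{\varphi}-\eps)<T^-_tu_-$; letting $\eps\to 0^+$ and invoking the continuity estimate in Proposition \ref{psg}(2) gives $T^-_t\hat{\varphi}\leq T^-_tu_-$. Since $u_-$ is a fixed point of $T^-_t$ by Proposition \ref{eqvvv}, this reduces to
\[
T^-_t\hat{\varphi}(x)\leq u_-(x),\qquad \forall\,t\geq 0,\ \forall\,x\in M.
\]

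Finally, sending $t\to+\infty$ and using the already-established existence of the limit $\hat{\varphi}_\infty=\lim_{t\to+\infty}T^-_t\hat{\varphi}$ gives $\hat{\varphi}_\infty\leq u_-$. As $u_-$ was arbitrary, $\hat{\varphi}_\infty$ is the minimal viscosity solution of (\ref{hu}). There is no serious obstacle here: the argument is essentially a bookkeeping chain through Propositions \ref{psg}, \ref{eqvvv}, and \ref{S1}. The only step that is not completely formal is the upgrade from strict to non-strict monotonicity of $T^-_t$, which is needed because the sandwich inequality is not strict; but this is handled by the $\eps$-shift argument above and is the sole place where some care is required.
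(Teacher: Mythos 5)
Your proof is correct, and it takes a genuinely shorter route than the paper's. The paper's argument introduces an auxiliary subset $\mathcal P_-$ of solutions that arise as $\lim_{t\to+\infty}T^-_tu_+$ for some $u_+\in\mathcal S_+$, first proves $v_-\geq\hat{\varphi}_\infty$ for $v_-\in\mathcal P_-$ by applying $T^-_t$ to the inequality $u_+\geq\hat{\varphi}$, and then handles an arbitrary solution $v_-$ by constructing $v_+:=\lim_{t\to+\infty}T^+_tv_-$ and $u_-:=\lim_{t\to+\infty}T^-_tv_+\in\mathcal P_-$, showing $\hat{\varphi}_\infty\leq u_-\leq v_-$. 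Your version collapses this into a single chain: you still use Proposition \ref{S1}(3) to place a forward weak KAM solution $u_+$ with $u_+\leq u_-$, and minimality of $\hat{\varphi}$ to get $\hat{\varphi}\leq u_+\leq u_-$, but then you apply $T^-_t$ once to $\hat{\varphi}\leq u_-$ (rather than to $\hat{\varphi}\leq u_+$) and exploit that $u_-$ is a fixed point, avoiding the detour through $\mathcal P_-$ and the intermediate limit $\lim T^-_tv_+$ entirely. The ingredients are the same (Proposition \ref{S1}(3), minimality of $\hat{\varphi}$ among forward weak KAM solutions, monotonicity and fixed-point properties of $T^-_t$), but your packaging is tighter; you are also more careful than the paper in explicitly upgrading Proposition \ref{psg}(1)'s strict monotonicity to the non-strict form via the $\eps$-shift, a step the paper uses implicitly.
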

\begin{proof}
Define
\begin{equation*}
\mathcal P_-:=\{u_-\in\mathcal S_-:\ \exists u_+\in\mathcal S_+\ \textrm{such\ that}\ u_-=\lim_{t\rightarrow +\infty}T^-_tu_+\}.
\end{equation*}
We first prove that for each $v_-\in\mathcal P_-$, there holds $v_-\geq \hat{\varphi}_\infty$. In fact, by definition of $\mathcal P_-$, there is $u_+\in\mathcal S_+$ such that $v_-=\lim_{t\rightarrow +\infty}T^-_tu_+$. Since $\hat{\varphi}$ is the minimal forward weak KAM solution, we have \[u_+\geq \hat{\varphi}.\] Acting $T^-_t$ on both sides of the inequality above, and letting $t\rightarrow +\infty$, we have $v_-\geq \hat{\varphi}_\infty$.

We then prove that for each $v_-\in \mathcal S_-\backslash \mathcal P_-$,  $v_-\geq \hat{\varphi}_\infty$ still holds. Let $v_+:=\lim_{t\rightarrow +\infty} T^+_t v_-$ and $u_-:=\lim_{t\rightarrow +\infty} T^-_t v_+$. Then $u_-\in\mathcal P_-$, which implies $u_-\geq \hat{\varphi}_\infty$. By Proposition \ref{S1} (3), $v_+\leq v_-$. Then we have $T^-_t v_+\leq T^-_t v_-=v_-$. Taking  $t\rightarrow +\infty$ we get $u_-\leq v_-$. Therefore, $v_-\geq u_-\geq \hat{\varphi}_\infty$.
\end{proof}
So far, we complete the proof of Theorem \ref{mthee2}.
\subsection{On Example (\ref{e2})}\label{ex}

The Hamiltonian of (\ref{e3e}) is formulated as
\begin{equation}\label{Hex}
  H(x,u,p)=\frac{p^2}{2}+\sin x\cdot u+\cos 2x-1.
\end{equation}
We first show $c_0=0$. Assume (\ref{e3e}) admits a smooth subsolution $u_0$ when $c<0$, then we have $|u'_0(0)|^2\leq 2c<0$, which is impossible. When $c=0$, the constant function $\varphi\equiv 0$ is a subsolution of (\ref{e3e}). Therefore $c_0=0$. By Proposition \ref{exist-sol}, there is a solution $u_-$ of (\ref{e3e}) given by
\[u_-:=\lim_{t\to+\infty}T_t^-\varphi.\]
 Since $T_t^-\varphi\geq \varphi$, then $u_-\geq 0$.

We then divide the proof into the following steps:
\begin{itemize}
\item In Step 1, we discuss the dynamical behavior of the contact Hamiltonian flow $\Phi^H_t$ generated by $H(x,u,p)$, which is restricted on a two dimensional energy shell $M^0$.
      \subitem $\centerdot$ In Step 1.1, we show that the non-wandering set of $\Phi^H_t$ consists of four fixed points;
      \subitem $\centerdot$ In Step 1.2, we classify these fixed points by linearization;
      \subitem $\centerdot$ In Step 1.3, we show that for each solution $v_-$ of (\ref{e3e}), the $\alpha$-limit set of any $(v_-,L,0)$-calibrated curve $\gm:(-\infty,0]\rightarrow \mathbb S^1$ with $\gamma(0)\neq \pi/2$ and $3\pi/2$ can only be $0$ or $\pi$. We only focus on  the projected  $\alpha$-limit set defined on {$\mathbb S^1$}. For simplicity, we define
\[\alpha(\gamma):=\{{x\in \mathbb S^1}:\ \textrm{there\ exists\ a\ sequence}\ t_n\rightarrow-\infty\ \textrm{such\ that}\ {|\gamma(t_n)-x|\to 0}\},\]
      where  $\gamma:(-\infty,0]\rightarrow \mathbb S^1$ is a $(v_-,L,0)$-calibrated curve. Moreover, we check the constant curves $\gm(t)\equiv 0,\pi$ are calibrated curves, which implies $v_-(0)=v_-(\pi)=0$, $v'_-(0)=v'_-(\pi)=0$.

\item In Step 2,  we prove the uniqueness of the solution $v_-$ of (\ref{e3e}).
      \subitem $\centerdot$ In Step 2.1,  we prove that $v_-$ is unique near $0$ and $\pi$;
      \subitem $\centerdot$ In Step 2.2, we prove that $v_-$ is unique on $[\pi,2\pi)$ by the comparison along calibrated curves via the Gronwall inequality. The uniqueness of $v_-$ on $[0,\pi]$ is guaranteed by the comparison principle for the Dirichlet problem.
\end{itemize}

\noindent \textbf{Step 1. The dynamical behavior of the contact Hamiltonian flow.}

For each solution $v_-$ of (\ref{e3e}), let $\gm:(-\infty,0]\rightarrow \mathbb S^1$ be a $(v_-,L,0)$-calibrated curve. Similar to the analysis at the beginning of \cite[Section 3.2]{JYZ}, the derivative $v'_-(\gm(t))$ exists for each $t\in (-\infty, 0)$ and the orbit $(\gm(t),v_-(\gm(t)),v'_-(\gm(t)))$ satisfies the contact Hamilton equations generated by the Hamiltonian $H(x,u,p)$ defined in (\ref{Hex}). Then the proof of the uniqueness of the solution of (\ref{e3e}) is related to the contact Hamiltonian flow $\Phi^H_t$ generated by $H(x,u,p)$.

Since $c_0=0$ and $H(\gm(t),v_-(\gm(t)),v'_-(\gm(t)))=0$ for $t\in (-\infty, 0)$, we discuss the flow on the two dimensional energy shell
\[M^0:=\{(x,u,p)\in T^*\mathbb S^1\times\mathbb R:\ H(x,u,p)=0\}.\]
Note that along the contact Hamiltonian flow, we have $dH/dt=-H\partial H/\partial u$, which equals to zero on the set $M^0$. Thus, $M^0$ is an invariant set under the action of $\Phi^H_t$. Since we are interested in the orbit $(\gm(t),v_-(\gm(t)),v'_-(\gm(t)))$, we then consider the flow $\Phi^H_t$ restrict on $M^0$. The contact Hamilton equations then reduce to
\begin{equation}\label{He}
  \left\{
   \begin{aligned}
   &\dot x=p,
   \\
   &\dot p=-(\cos x\cdot u-2\sin 2x)-\sin x \cdot p,
   \\
   &\dot u=p^2.
   \\
   \end{aligned}
   \right.
\end{equation}

\noindent \textbf{Step 1.1. The non-wandering set.} We first consider the non-wandering set $\Omega$ of $\Phi^H_t|_{M^0}$. Suppose there is an orbit $(x(t),u(t),p(t))$ belongs to $\Omega$. Since $\dot u=p^2\geq 0$, $u(t)$ equals to a constant $c_u$ and $p(t)\equiv 0$. By $\dot x(t)=p(t)=0$, $x(t)$ also equals to a constant $c_x$. By $H(x,u,p)=0$ and $p=0$, we have
\[\sin x\cdot u+\cos 2x-1=0.\]
By $p=0$ and $\dot p=0$ we have
\[\cos x\cdot u-2\sin 2x=0.\]
A direct calculation shows that the only non-wandering points are
\[P_1=(0,0,0),\quad P_2=(\pi,0,0),\quad P_3=(\frac{\pi}{2},2,0),\quad P_4=(\frac{3\pi}{2},-2,0).\]

\noindent \textbf{Step 1.2. The classification of fixed points.} We then consider the dynamical behavior of $\Phi^H_t|_{M^0}$ near the fixed points. { After a translation, we put the fixed points to be the origin.} Near the points $P_1$ and $P_2$, the linearised equation of (\ref{He}) is
\[\dot x=p,\quad \dot p=4x,\quad \dot u=0.\]
Thus, $P_1$ and $P_2$ are hyperbolic fixed points for the dynamical system $\Phi^H_t|_{M^0}$. Near the points $P_3$ and $P_4$, the linearised equations of (\ref{He}) are
\[\dot x=p,\quad \dot p=-2x-p,\quad \dot u=0\]
and
\[\dot x=p,\quad \dot p=-2x+p,\quad \dot u=0\]
respectively. Thus, $P_3$ is a stable focus, and $P_4$ is an unstable focus.

\noindent \textbf{Step 1.3. The $\alpha$-limit set of calibrated curves.} The $\alpha$-limit set of a $(v_-,L,0)$-calibrated curve $\gm$ is contained in the projection of $\Omega$. If $\gm$ itself is not a fixed point, and the $\alpha$-limit of $\gm$ is a focus, then there are two constants $t_1<t_2<0$ with $\gm(t_1)=\gm(t_2)$ such that $v'_-(\gm(t_1))\neq v'_-(\gm(t_2))$, which is impossible. In other words, the obits near a focus can not form a 1-graph. Thus, the $\alpha$-limit of $\gm:(-\infty,0]\rightarrow \mathbb S^1$ with $\gamma(0)\neq \pi/2,3\pi/2$ can only be either $0$ or $\pi$. 
For constant curve $\gm:(-\infty,0]\rightarrow\mathbb S^1$ with $\gm(t)\equiv x_0$ and $x_0$ equals to either $0$ or $\pi$, we have
\[v_-(x_0)-v_-(x_0)=0=\int_0^tL(x_0,v_-(x_0),0)ds,\]
where
\[L(x,u,\dot x)=\frac{\dot x^2}{2}-\sin x\cdot u-\cos 2x+1\]
is the Lagrangian corresponding to $H(x,u,p)$. Then the constant curve $\gm$ is a $(v_-,L,0)$-calibrated curve. We then have
\[\lim_{t\rightarrow-\infty}v_-(\gm(t))=v_-(0)=v_-(\pi)=c_u=0,\]
and
\[\lim_{t\rightarrow-\infty}v'_-(\gm(t))=v'_-(0)=v'_-(\pi)=0.\]

\noindent \textbf{Step 2. The uniqueness of the solution $v_-$ of (\ref{e3e}).}

\noindent \textbf{Step 2.1.} 
For $x\in \mathbb S^1\backslash\{\pi/2, 3\pi/2\}$, let $\gm:(-\infty,0]\rightarrow\mathbb S^1$ with $\gm(0)=x$ be a $(v_-,L,0)$-calibrated curve.
We claim that there is a constant $\delta>0$ such that for $x\in[0,\delta]$, the $\alpha$-limit of the calibrated curve $\gm$ is $0$. If not, the $\alpha$-limit of $\gm$ is $\pi$ for all $x\in (0,\pi]$. Then $v_-$ is decreasing on $(0,\pi]$, since $v_-$ is increasing along $\gm$ by the last equality of (\ref{He}). By Step 1.3, $v_-(0)=v_-(\pi)=0$,
we get $v_-\equiv 0$ on $[0,\pi]$, which is impossible. By similar arguments, we conclude that there is a constant $\delta>0$ such that the $\alpha$-limit of $\gm$ is $0$ for $x\in[0,\delta]\cup[2\pi-\delta,2\pi)$, and the $\alpha$-limit of $\gm$ is $\pi$ for $x\in[\pi-\delta,\pi+\delta]$. Shrink $\delta$ if necessary, the 1-graph $(x,v_-(x),v'_-(x))$ coincides with the local unstable manifold of $P_1$ (resp. $P_2$) corresponding to the restricted flow $\Phi^H_t|_{M^0}$ when $x\in[0,\delta]\cup[2\pi-\delta,2\pi)$ (resp. $x\in[\pi-\delta,\pi+\delta]$). Therefore, the solution $v_-$ is unique on $[0,\delta]\cup[2\pi-\delta,2\pi)\cup[\pi-\delta,\pi+\delta]$.

\noindent \textbf{Step 2.2.} Since $\sin x\geq \sin\delta>0$ for $x\in [\delta,\pi-\delta]$, by the uniqueness of the solution of the Dirichlet problem (cf. \cite[Theorem 3.3]{CIL}), $v_-$ is unique on $[0,\pi]$. It remains to consider the uniqueness {of $v_-$ for $x\in [\pi,2\pi)$.}
Assume that there are two solutions $u_-$ and $v_-$ satisfying $u_-(x)>v_-(x)$ at some point $x\in (\pi+\delta,3\pi/2)$. Let $\gm$ be a $(v_-,L,0)$-calibrated curve with $\gm(0)=x$. Without any loss of generality, we assume the $\alpha$-limit of $\gm$ is $\pi$. {Take} $t_0<0$ such that $\gm(t_0)=\pi+\delta$, and {define}
\[G(s):=u_-(\gm(s))-v_-(\gm(s)),\quad s\in[t_0,0].\]
Then $G(t_0)=0$ and $G(0)>0$. By continuity, there is $\sigma_0\in[t_0,0)$ such that $G(\sigma_0)=0$ and $G(\sigma)>0$ for all $\sigma\in(\sigma_0,0]$. By definition we have
\begin{equation*}
  u_-(\gm(\sigma))-u_-(\gm(\sigma_0))\leq \int_{\sigma_0}^\sigma L(\gm(s),u_-(\gm(s)),\dot \gm(s))ds,
\end{equation*}
and
\begin{equation*}
  v_-(\gm(\sigma))-v_-(\gm(\sigma_0))=\int_{\sigma_0}^\sigma L(\gm(s),v_-(\gm(s)),\dot \gm(s))ds,
\end{equation*}
which implies
\begin{equation*}
  G(\sigma)\leq \int_{\sigma_0}^\sigma G(s)ds.
\end{equation*}
By the Gronwall inequality, we have $G(\sigma)\equiv 0$ for all $\sigma\in(\sigma_0,0]$, which contradicts $u_-(x)>v_-(x)$. The case $x\in (3\pi/2,2\pi-\delta)$ is similar. By the continuity of $v_-$ at $3\pi/2$, we finally conclude that the solution is unique on $[\pi,2\pi)$.

{
\begin{remark}\label{geex}
The method introduced in this section can be generalized to the following case
\[H(x,Du)+\lambda(x)u=c,\quad x\in\mathbb S^1,\]
where $\lambda(x)$ and $H(x,p)$ are of class $C^3$ and
\begin{itemize}
\item [(i)] the zero points of $\lambda(x)$ are $x_1$ and $x_2$, and $\lambda'(x)\neq 0$ at $x_1$ and $x_2$;
\item [(ii)] $H(x,p)$ is strictly convex and superlinear in $p$, $H(x,p)\equiv H(x,-p)$, \[\max_{x\in \mathbb S^1}H(x,0)=0\] and the maximum is achieved at $x_1$ and $x_2$, and the Hessian matrix of $H$
is negative definite at $(x_1,0)$ and $(x_2,0)\in T^*\mathbb S^1$;
\item [(iii)] for all $x\in\mathbb S^1$, let $\gamma:(-\infty,0]\to \mathbb S^1$ with $\gamma(0)=x$ be a calibrated curve, then the $\alpha$-limit of $\gamma$ is either $x_1$ or $x_2$.
\end{itemize}
By (ii), $H(x,p)\geq H(x,0)$, where the equality holds if and only if $p=0$. By the argument at the beginning of this section, it is direct to see the critical value $c_0=0$. Now let $c=0$. The contact Hamilton equations for $\Phi^H_t|_{M^0}$ are
\begin{equation}\label{He2}
  \left\{
   \begin{aligned}
   &\dot x=\frac{\partial H}{\partial p}(x,p),
   \\
   &\dot p=-\frac{\partial H}{\partial x}(x,p)-\lambda'(x)u-\lambda(x) p,
   \\
   &\dot u=\frac{\partial H}{\partial p}(x,p)p.
   \\
   \end{aligned}
   \right.
\end{equation}
By (ii), $\dot u\geq 0$ and the equality holds if and only if $p=0$. By the second equation in (\ref{He2}), there is only one non-wandering point of $\Phi^H_t|_{M^0}$ over $x_1$ (resp. $x_2$)
\[P_1=(x_1,0,0) \quad \textrm{(resp.}\ P_2=(x_2,0,0)\textrm{)}\]
 Note that
\[L(x,0)=\sup_{p\in T^*\mathbb S^1}-H(x,p)=-\inf_{p\in T^*\mathbb S^1}H(x,p)=-H(x,0).\]
Similar to Step 1.3 above, we have $v_-(x_1)=v_-(x_2)=0$ for each solution $v_-$. Near the points $P_1$ and $P_2$, the linearised equation is
\[\dot x=\frac{\partial^2 H}{\partial x\partial p}x+\frac{\partial^2 H}{\partial p^2}p,\quad \dot p=-\frac{\partial^2 H}{\partial x^2}x-\frac{\partial^2 H}{\partial x\partial p}p,\quad \dot u=0,\]
By (ii), $P_1$ and $P_2$ are hyperbolic fixed points. By (iii) and $\dot u\geq 0$, the solution is unique near $x_1$ and $x_2$. The remaining proof is similar to Step 2.2 above, we omit it for brevity.
\end{remark}
}

\section{Large time behavior of the solution of (\ref{C1})}\label{pth2}

Let us recall $u_{\max}$ (resp. $u_{\min}^+$) be the maximal solution (resp. minimal forward weak KAM solution) of (\ref{hu}). These two solutions play an important role in characterizing the large time behavior of the solution of (\ref{C1}).

\subsection{Above the maximal solution}\label{above}

Let $\varphi\geq u_{\max}$. Then $T_t^-\varphi\geq u_{\max}$. Combining with Lemma \ref{ubd}(1), $T^-_t\varphi(x)$ has a bound independent of $t$. Then the pointwise limit
\[\bar u(x):=\limsup_{t\rightarrow+\infty} T^-_t\varphi(x)\]
exists.

Assume ($\star$) holds. By Proposition \ref{mthlip}, the family $\{T^-_t\varphi(x)\}_{t\geq 1}$ is equi-Lipschitz in $x$. We denote by $\kappa$ the Lipschitz constant of $T^-_t\varphi(x)$ in $x$. Since
\begin{equation*}
  |\sup_{s\geq t} T^-_s\varphi(x)-\sup_{s\geq t} T^-_s\varphi(y)|\leq \sup_{s\geq t}|T^-_s\varphi(x)-T^-_s\varphi(y)|\leq \kappa d(x,y),
\end{equation*}
the limiting procedure
\begin{equation*}
  \bar{u}(x)=\lim_{t\rightarrow+\infty}\sup_{s\geq t} T^-_s\varphi(x)
\end{equation*}
is uniform in $x$. Thus, the function $\bar{u}(x)$ is Lipschitz continuous.
We assert that $\bar u$ is a subsolution. If the assertion is {true, by }Proposition \ref{exist-sol}, $\lim_{t\rightarrow +\infty} T^-_t\bar u(x)$ exits, and it is a solution. {Since} $T^-_t\varphi\geq u_{\max}$, we have $\bar u\geq u_{\max}$. Thus,  $\lim_{t\rightarrow+\infty} T^-_t\bar{u}=u_{\max}$. Based on Section \ref{above2},  the lower half limit $\check{\varphi}=u_{\max}$. By the definition of $\check{\varphi}$, we have \[\liminf_{t\rightarrow+\infty} T^-_t\varphi(x)\geq \check{\varphi}(x)=u_{\max}.\] On the other hand,
\[\limsup_{t\rightarrow+\infty}T^-_t\varphi(x)=\bar u(x)\leq \lim_{t\rightarrow +\infty} T^-_t\bar u(x)=u_{\max}(x).\]
It follows that $\lim_{t\rightarrow+\infty}T^-_t\varphi=u_{\max}$ uniformly on $M$.

It remains to prove $\bar u$ is a subsolution. By Proposition \ref{eudomii}, we only need to show $T^-_t\bar u$ is {increasing} in $t$.

We claim that for every $\varepsilon>0$, there exists a constant $s_0>0$ independent of $x$ such that for any $s\geq s_0$,
\[T^-_s\varphi(x)\leq \bar u(x)+\varepsilon.\]
Fixing $x\in M$, by definition of $\limsup$, for every $\varepsilon>0$, there is $s_0(x)>0$ such that for any $s\geq s_0(x)$,
\[T^-_s\varphi(x)\leq \bar u(x)+\frac{\varepsilon}{3}.\]
Take $r:=\frac{\varepsilon}{3\kappa}$. For $s\geq s_0(x)$, we have
\begin{equation*}
\begin{aligned}
  T^-_s\varphi(y)&\leq T^-_s\varphi(x)+\kappa d(x,y)\leq \bar u(x)+\frac{\varepsilon}{3}+\kappa d(x,y)
  \\ &\leq \bar u(y)+\frac{\varepsilon}{3}+2\kappa d(x,y)\leq \bar u(y)+\varepsilon,\quad \forall y\in B_r(x).
\end{aligned}
\end{equation*}
Since $M$ is compact, there are finite points $x_i\in M$ such that for each $y\in M$, there is a point $x_i$ such that $y\in B_r(x_i)$. Let $s_0:=\max_i s_0(x_i)$ and the claim is proved.

By Proposition \ref{psg}, for each $t>0$ we have
\[T^-_t(T^-_s\varphi(x))\leq T^-_t(\bar u(x)+\varepsilon)\leq T^-_t\bar u(x)+\varepsilon e^{\lambda_0 t},\]
where $\lambda_0:=\|\lambda(x)\|_\infty>0$. Taking the limit $s\rightarrow+\infty$, we have
\[\bar u(x)=\limsup_{s\rightarrow +\infty}T^-_t(T^-_s\varphi(x))\leq T^-_t\bar u(x)+\varepsilon e^{\lambda_0 t}.\]
Letting $\varepsilon\rightarrow 0+$, we get $\bar u(x)\leq T^-_t \bar u(x)$, which means $T^-_t\bar u(x)$ is increasing in $t$.


\subsection{Below the minimal solution}\label{below}

We have proved that for each $\varphi\geq u_{\max}$, $\lim_{t\rightarrow+\infty}T^-_t\varphi=u_{\max}$ uniformly on $M$. Combining with Proposition \ref{rela} and Proposition \ref{eqvvv}, one has
\begin{lemma}\label{vapp}
Let $\varphi\in C(M)$. If $\varphi\leq u^+_{\min}$, then $\lim_{t\rightarrow+\infty}T^+_t\varphi=u^+_{\min}$ uniformly on $M$.
\end{lemma}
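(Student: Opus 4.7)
The plan is to reduce the statement to the convergence result established in Section \ref{above} by exploiting the duality furnished by Proposition \ref{rela}. Associated to the Hamiltonian $H(x,p)+\lambda(x)u-c$ of (\ref{hu}) is the ``dual'' Hamiltonian $\bar H(x,u,p):=H(x,-p)-\lambda(x)u-c$, whose Lax--Oleinik semigroups are precisely the $\bar T^{\pm}_t$ appearing in Proposition \ref{rela}. The hypotheses (C), (CON), (CER) and (*) transfer directly to $\bar H$: the $p$-dependence enters only through $p\mapsto H(x,-p)$, which preserves continuity, strict convexity, coercivity and the two-sided superlinear bound, while $(\pm)$ is symmetric under $\lambda\mapsto -\lambda$. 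Hence every result proved for (\ref{hu}) in this paper, and in particular the whole content of Section \ref{above}, applies verbatim to the dual equation $\bar H(x,u(x),Du(x))=0$.

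First I would identify $u^+_{\min}$ with the negative of the maximal viscosity solution of the dual equation. By Proposition \ref{eqvvv}, the map $v\mapsto -v$ is an order-reversing bijection between the set $\mathcal S_+$ of forward weak KAM solutions of (\ref{hu}) and the set of viscosity solutions of the dual equation. Consequently, the minimal element $u^+_{\min}$ of $\mathcal S_+$ (constructed in Section \ref{below2}) corresponds to the maximal viscosity solution $\bar u_{\max}$ of the dual equation, yielding the identity $u^+_{\min}=-\bar u_{\max}$.

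Given $\varphi\in C(M)$ with $\varphi\leq u^+_{\min}$, set $\psi:=-\varphi$, so $\psi\geq -u^+_{\min}=\bar u_{\max}$. Applying the convergence result of Section \ref{above} to the dual equation then gives $\bar T^-_t\psi\to \bar u_{\max}$ uniformly as $t\to+\infty$. Invoking Proposition \ref{rela} one gets $T^+_t\varphi=-\bar T^-_t(-\varphi)=-\bar T^-_t\psi$, so passing to the limit produces
\[
T^+_t\varphi\longrightarrow -\bar u_{\max}=u^+_{\min}
\]
uniformly in $x\in M$, as required.

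The only step needing any attention, which I expect to be routine, is confirming that the proof of Section \ref{above} (which uses (*) via the equi-Lipschitz estimate of Corollary \ref{equilip}) carries over to the dual equation. Since (*) is manifestly invariant under $p\mapsto -p$ and the $u$-dependence $-\lambda(x)u$ of $\bar H$ is still Lipschitz, this transfer is purely bookkeeping, and there is no genuine obstacle.
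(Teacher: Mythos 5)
Your proposal is correct and is exactly the route the paper takes: the paper's (extremely terse) proof of Lemma \ref{vapp} is ``apply the Section \ref{above} result together with Propositions \ref{eqvvv} and \ref{rela}'', and your argument is precisely a careful unwinding of that one line, identifying $u^+_{\min}$ with $-\bar u_{\max}$ for the dual Hamiltonian $H(x,-p)-\lambda(x)u-c$ and then invoking $T^+_t\varphi=-\bar T^-_t(-\varphi)$.
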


\begin{lemma}\label{55}
Let $\varphi\in C(M)$ and there is a point $x_0\in M$ such that $\varphi(x_0)<u_{\min}^+(x_0)$, then $T^-_t\varphi(x)$ tends to $-\infty$ uniformly on $M$ as $t\rightarrow+\infty$.
\end{lemma}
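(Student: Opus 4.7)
The plan is to argue by contradiction, combining the one-sided identity in Lemma \ref{T-T+>} with the uniform convergence from Lemma \ref{vapp} to produce a contradiction with the strict inequality $\varphi(x_0)<u_{\min}^+(x_0)$.

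Suppose the conclusion fails. Together with the upper bound $\sup_{t\geq 0}\|T_t^-\varphi\|_\infty<\infty$ supplied by Lemma \ref{ubd}(1), one can then extract a sequence $t_n\to+\infty$ and a constant $K>0$ such that the functions $\psi_n:=T_{t_n}^-\varphi$ are uniformly bounded, $\|\psi_n\|_\infty\leq K$. Set the pointwise truncation $\tilde\psi_n:=\psi_n\wedge u_{\min}^+$. Then $\tilde\psi_n\leq u_{\min}^+$ on $M$, and there is a single constant $c_\ast:=\min\{-K,\min_M u_{\min}^+\}$ with $c_\ast\leq \tilde\psi_n\leq u_{\min}^+$ for every $n$.

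Using the monotonicity of $T_t^+$ from Proposition \ref{psg}(1) together with Lemma \ref{T-T+>}, one obtains
\[
T_{t_n}^+\tilde\psi_n\;\leq\;T_{t_n}^+\,T_{t_n}^-\varphi\;\leq\;\varphi\qquad\text{on }M.
\]
On the other hand, $u_{\min}^+$ is a fixed point of $T_t^+$ (Proposition \ref{eqvvv}), so the monotonicity of $T_t^+$ combined with $c_\ast\leq\tilde\psi_n\leq u_{\min}^+$ yields the sandwich
\[
T_t^+c_\ast\;\leq\;T_t^+\tilde\psi_n\;\leq\;u_{\min}^+\qquad\text{for every }t\geq 0\text{ and }n.
\]
Since $c_\ast\leq u_{\min}^+$ and the constant $c_\ast$ lies in $C(M)$, Lemma \ref{vapp} applied to $c_\ast$ gives $T_t^+c_\ast\to u_{\min}^+$ uniformly as $t\to+\infty$; the sandwich then forces $T_{t_n}^+\tilde\psi_n\to u_{\min}^+$ uniformly as $n\to+\infty$.

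Evaluating at $x_0$ produces $u_{\min}^+(x_0)=\lim_n T_{t_n}^+\tilde\psi_n(x_0)\leq\varphi(x_0)$, contradicting $\varphi(x_0)<u_{\min}^+(x_0)$. The only delicate point is obtaining the limit of $T_{t_n}^+\tilde\psi_n$ when $t_n$ and the initial datum $\tilde\psi_n$ move simultaneously; this is precisely what the $n$-independent constant lower bound $c_\ast$ buys, reducing the joint uniform convergence to a single application of Lemma \ref{vapp} and thereby avoiding any appeal to the equi-Lipschitz estimate of Corollary \ref{equilip} (which would require the extra assumption $(*)$ not posited in the statement).
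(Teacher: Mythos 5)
Your proof is correct and follows essentially the same route as the paper: argue by contradiction, extract a bounded sequence $T_{t_n}^-\varphi$, bound it below by a constant $\leq u_{\min}^+$, push the constant with $T_{t_n}^+$ using Lemma \ref{vapp}, and combine with Lemma \ref{T-T+>} evaluated at $x_0$ to reach the contradiction. The only cosmetic differences are that the paper takes the lower bound $K'=\min\{K_1,K_2\}$ via Proposition \ref{uibo} whereas you choose $c_\ast=\min\{-K,\min_M u_{\min}^+\}$ directly (slightly cleaner), and your truncation $\tilde\psi_n:=\psi_n\wedge u_{\min}^+$ and the upper half of the sandwich are not actually needed—the one-sided bound $T_{t_n}^+c_\ast\leq T_{t_n}^+\psi_n\leq\varphi$ already forces $u_{\min}^+(x_0)\leq\varphi(x_0)$.
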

\begin{proof}
We first prove that $\min_{x\in M}T^-_t\varphi(x)$ tends to $-\infty$ as $t\rightarrow+\infty$. We argue by contradiction. Assume there is a constant $K_1$ and a sequence $\{t_n\}_{n\in \mathbb{N}}$ such that $T^-_{t_n}\varphi\geq K_1$. By Lemma \ref{ubd}, $T^-_{t_n}\varphi$ also has a upper bound independent of $t$. Thus, the function $v_n(x):=T^-_{t_n}\varphi(x)$ is bounded continuous for each $n$. By Proposition \ref{T-T+>}, we have $\varphi(x_0)\geq T^+_{t_n}v_n(x_0)$. By Proposition \ref{uibo}, all of subsolutions are uniformly bounded. Denote by $K_2$ their lower bound.
Let $K':=\min\{K_1, K_2\}$, then $T^+_{t_n}v_n\geq T^+_{t_n}K'$. By Lemma \ref{ubd}(2), $T^+_t K'$ has a lower bound independent of $t$. Since $K'\leq K_2$, $T^+_t K'$ is smaller than every forward weak KAM solution of (\ref{hu}). By Lemma \ref{vapp},
 $\lim_{t\rightarrow +\infty}T^+_t K'$ exists and it equals to $u^+_{\min}$. We conclude
\begin{equation*}
  u^+_{\min}(x_0)\leq \limsup_{t_n\to+\infty} T^+_{t_n}v_n(x_0)\leq \varphi(x_0)<u^+_{\min}(x_0),
\end{equation*}
which leads to a contradiction.

We then prove that $T^-_t\varphi(x)$ tends to $-\infty$ uniformly as $t\rightarrow+\infty$. Let $W(x)$ be the inverse function of $x\mapsto xe^x$. Take $0<\eta\leq W(1)/\lambda_0$. We define $K(t):=\min_{x\in M}T^-_t\varphi(x)$, which tends to $-\infty$ as $t\to+\infty$. We take an arbitrary $x\in M$. If $T^-_{t+\eta}\varphi(x)\leq K(t)$, then the proof is finished. So we assume $T^-_{t+\eta}\varphi(x)>K(t)$. Let $x_t$ be the minimal point of $T^-_t\varphi$. Take a geodesic $\alpha:[0,\eta]\to M$ with $\alpha(0)=x_t$, $\alpha(\eta)=x$ and constant speed $\|\dot\alpha\|\leq \textrm{diam}(M)/\eta$. By continuity, there is $\sigma\in[0,\eta)$ such that $T^-_{t+\sigma}\varphi(\alpha(\sigma))=K(t)$ and $T^-_{t+s}\varphi(\alpha(s))>K(t)$ for all $s\in(\sigma,\eta]$. Then
\begin{equation*}
\begin{aligned}
  &T^-_{{t}+s}\varphi(\alpha(s))
  \leq T^-_{t+\sigma}\varphi(\alpha(\sigma))+\int_\sigma^s \bigg[L(\alpha(\tau),\dot \alpha(\tau))-\lambda(\alpha(\tau))\cdot T^-_{{t}+\tau}\varphi(\alpha(\tau))+c\bigg]d\tau
  \\ &\leq K(t)+\int_\sigma^s \bigg[L(\alpha(\tau),\dot \alpha(\tau))-\lambda_0 K(t)+c\bigg]d\tau
  +\lambda_0\int_\sigma^s\bigg[T^-_{{t}+\tau}\varphi(\alpha(\tau))-K(t)\bigg]d\tau
  \\ &\leq K(t)+\bar{C}_L\eta-\lambda_0\eta K(t)+\lambda_0\int_\sigma^s\bigg[T^-_{{t}+\tau}\varphi(\alpha(\tau))-K(t)\bigg]d\tau,
\end{aligned}
\end{equation*}
where
\[\bar{C}_L:=\max_{x\in M,\|\dot x\|\leq \textrm{diam}(M)/\eta}|L(x,\dot x)+c|\]
is finite for a fixed $\eta$ by the assumption ($\star$). By the Gronwall inequality, we have
\[T^-_{{t}+s}\varphi(\alpha(s))\leq \bar{C}_L\eta e^{\lambda_0\eta}+(1-\lambda_0\eta e^{\lambda_0\eta})K(t).\]
Since $\eta\leq W(1)/\lambda_0$, we have $1-\lambda_0\eta e^{\lambda_0\eta}>0$. Take $s=\eta$, we finally conclude that $T^-_t\varphi(x)$ tends to $-\infty$ as $t\to+\infty$.
\end{proof}
So far, we complete the proof of Theorem \ref{mth2}.

\subsection{Proof of Theorem \ref{mth3}}\label{pth3}


According to Proposition \ref{exist-sub}, for $c\geq c_0$, (\ref{hu}) has a Lipschitz subsolution. Let $u_0$ be a subsolution of (\ref{hu}) with $c=c_0$. For $c>c_0$, there holds   \[T_t^+u_0< u_0<T_t^-u_0.\] One can construct two different solutions $u_-$ and $v_-$ of (\ref{hu}) from $u_0$ by Proposition \ref{multi}. Precisely, we have
\begin{equation}\label{u0u-}
  u_-=\lim_{t\rightarrow +\infty} T^-_t u_0,\ u_+=\lim_{t\rightarrow +\infty} T^+_t u_0,\ \ v_-=\lim_{t\rightarrow +\infty} T^-_t u_+.
\end{equation}
It follows that  $u_+<u_0<u_-$.

\begin{lemma}\label{cb}
Let $c>c_0$. For each $\alpha\in (0,1]$ and each solution $w_-$ of (\ref{hu}), the convex combination
\[u_\alpha:=\alpha u_0+(1-\alpha)w_-\]
is a strict subsolution of (\ref{hu}). In particular, we have $T^+_tu_\alpha<u_\alpha<T^-_t u_\alpha$.
\end{lemma}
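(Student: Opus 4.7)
The plan is in two stages: first show that $u_\alpha$ is a \emph{strict} subsolution of (\ref{hu}) with explicit margin $\eta:=\alpha(c-c_0)>0$, and then upgrade the weak inequalities $T^+_tu_\alpha\leq u_\alpha\leq T^-_tu_\alpha$ (provided by Proposition \ref{eudomii}) to strict ones by constructing an explicit time-dependent subsolution of the evolutionary Cauchy problem and appealing to comparison.

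For the strict subsolution claim, $u_0$ and $w_-$ are Lipschitz (Proposition \ref{uibo}), so $u_\alpha$ is Lipschitz, and at every differentiability point (a.e., by Rademacher) one has $Du_\alpha=\alpha Du_0+(1-\alpha)Dw_-$. Convexity (CON) together with the sub/solution conditions gives
\[
H(x,Du_\alpha)+\lambda(x)u_\alpha \leq \alpha[H(x,Du_0)+\lambda u_0]+(1-\alpha)[H(x,Dw_-)+\lambda w_-] \leq \alpha c_0+(1-\alpha)c = c-\eta.
\]
For convex coercive $H$, the standard equivalence between a.e. and viscosity subsolutions among Lipschitz functions then upgrades this pointwise bound to a viscosity subsolution of $H+\lambda u = c-\eta$.

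To deduce $T^-_tu_\alpha>u_\alpha$, set $\lambda_0:=\|\lambda\|_\infty$ and consider the ansatz
\[
V(x,t):=u_\alpha(x)+a(t), \qquad a(t):=\frac{\eta}{\lambda_0}\bigl(1-e^{-\lambda_0 t}\bigr),
\]
so that $a(0)=0$, $a(t)>0$ for $t>0$, and $a'+\lambda_0 a\equiv\eta$. If $\phi$ touches $V$ from above at $(x_0,t_0)\in M\times (0,+\infty)$, then because $a$ is smooth and $V$ is a sum of a function of $x$ and a function of $t$, one obtains $\phi_t(x_0,t_0)=a'(t_0)$ and $\phi_x(x_0,t_0)\in D^+u_\alpha(x_0)$; the strict subsolution test for $u_\alpha$ together with $\lambda a\leq\lambda_0 a$ then gives
\[
\phi_t+H(x_0,\phi_x)+\lambda(x_0)V \leq a'(t_0)+(c-\eta)+\lambda_0 a(t_0) = c.
\]
Thus $V$ is a viscosity subsolution of $\partial_t V+H(x,DV)+\lambda V=c$ with initial datum $u_\alpha$, and the comparison principle for the Cauchy problem (\cite[Corollary 3.2]{Ish6}) yields $V\leq T^-_tu_\alpha$, whence $T^-_tu_\alpha\geq u_\alpha+a(t)>u_\alpha$ for $t>0$.

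For $T^+_tu_\alpha<u_\alpha$, I would invoke Proposition \ref{rela}: letting $\bar T^-_t$ be the backward Lax--Oleinik semigroup associated with the conjugate equation $H(x,-Du)-\lambda(x)u=c$, one has $\bar T^-_t(-u_\alpha)=-T^+_tu_\alpha$. A direct calculation, $H(x,-D(-u_\alpha))-\lambda(-u_\alpha)=H(x,Du_\alpha)+\lambda u_\alpha\leq c-\eta$, shows that $-u_\alpha$ is a strict subsolution of the conjugate equation with the same margin $\eta$, so the preceding argument applied to $(-u_\alpha,\bar T^-_t)$ delivers $\bar T^-_t(-u_\alpha)>-u_\alpha$, equivalently $T^+_tu_\alpha<u_\alpha$. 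The main technical point requiring care is the rigorous viscosity verification that $V$ is an evolutionary subsolution---the separation of test functions into time and space components must be justified from the product structure $V=u_\alpha+a$---but once this is done the rest of the proof assembles standard weak KAM and viscosity-solution machinery.
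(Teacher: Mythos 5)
Your proof is correct, and the first half (strict subsolution via convexity/Jensen, giving margin $\eta=\alpha(c-c_0)$) is exactly the paper's computation. The second half, however, is a genuinely different route: the paper upgrades the weak inequalities to strict ones by appealing to a prepared auxiliary lemma (Lemma \ref{strss} in the appendix), whose mechanism is to compare $T_t^\pm$ with the Lax--Oleinik semigroups $\widetilde{T}_t^\pm$ built from the shifted Lagrangian $\widetilde{L}=L-\epsilon_0$, exploiting strict monotonicity of the semigroup in $L$ (via \cite[Proposition 3.1]{WWY1}). You instead build an explicit evolutionary supersolution-separated subsolution $V(x,t)=u_\alpha(x)+a(t)$ with $a$ solving $a'+\lambda_0 a=\eta$, verify $V$ is a viscosity subsolution of the Cauchy problem, and invoke the comparison principle (\cite[Corollary 3.2]{Ish6}); the $T^+$ side is handled by the conjugate-Hamiltonian symmetry of Proposition \ref{rela}. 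Your method buys an explicit, quantitative lower bound on the gap, $T_t^-u_\alpha-u_\alpha\geq\frac{\eta}{\lambda_0}(1-e^{-\lambda_0 t})$, whereas the paper's route via Lemma \ref{strss} is shorter in context precisely because that lemma is already available and handles both directions simultaneously without passing to the conjugate equation. Both approaches are sound; one small detail to flag in yours is that the separation of the test function into time and space parts at a touching point relies on $V$ having additive product structure $u_\alpha(x)+a(t)$ with $a$ smooth, which you correctly identify but should state as the justification for $\partial_t\phi(x_0,t_0)=a'(t_0)$ and $D_x\phi(x_0,t_0)\in D^+u_\alpha(x_0)$.
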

\begin{proof}
Since $u_0$ is a  Lipschitz subsolution of (\ref{hu}) with $c=c_0$, we have
\begin{equation*}
  H(x,Du_0(x))+\lambda (x)u_0(x)+(c-c_0)\leq c,\quad a.e\ x\in M.
\end{equation*}
Since $w_-$ is a solution of (\ref{hu}), we have
\begin{equation*}
  H(x,D w_-(x))+\lambda (x)w_-(x)=c,\quad a.e.\ x\in M.
\end{equation*}
Therefore
\begin{equation*}
\begin{aligned}
  \alpha H(x,Du_0(x))&+(1-\alpha)H(x,D w_-(x))
  \\ &+\lambda(x)\bigg(\alpha u_0(x)+(1-\alpha)w_-(x)\bigg)+\alpha(c-c_0)\leq c,\quad a.e.\ x\in M.
\end{aligned}
\end{equation*}
By the convexity of $H(x,p)$ with respect to $p$, the Jensen's inequality gives
\begin{equation*}
  H(x,Du_\alpha(x))+\lambda(x) u_\alpha(x)\leq (1-\alpha)c+\alpha c_0,\quad a.e.\ x\in M.
\end{equation*}
Let $\epsilon_0:=\alpha(c-c_0)>0$. Then
\begin{equation*}
  H(x,Du_\alpha(x))+\lambda(x) u_\alpha(x)+\epsilon_0\leq c,\quad a.e.\ x\in M.
\end{equation*}
By Lemma \ref{strss}, $T^+_tu_\alpha<u_\alpha<T^-_t u_\alpha$.
\end{proof}

\begin{lemma}\label{6.2}
Let $c>c_0$. Define $u_-$ and $v_-$ as in (\ref{u0u-}). Then $u_-$ is the maximal solution of (\ref{hu}), and $v_-$ is the minimal solution of (\ref{hu}).
\end{lemma}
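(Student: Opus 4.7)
The plan is to identify $u_-$ with the maximal solution $u_{\max}$ and $v_-$ with the minimal solution $u_{\min}$ by exploiting the strict subsolution structure provided by Lemma \ref{cb}, together with the characterizations of $u_{\max}$ and $u_{\min}$ established in Section \ref{pthee2}. Since both $u_-$ and $v_-$ are solutions of (\ref{hu}) by Propositions \ref{exist-sol} and \ref{S1}, the inequalities $u_- \leq u_{\max}$ and $v_- \geq u_{\min}$ are automatic, and the work lies in the reverse bounds.

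For the maximal solution, I would apply Lemma \ref{cb} with $w_- := u_{\max}$: the interpolants $u_\alpha := \alpha u_0 + (1-\alpha) u_{\max}$, $\alpha \in (0,1]$, are strict subsolutions of (\ref{hu}) satisfying $u_0 \leq u_\alpha \leq u_{\max}$. Here the second inequality uses that every subsolution $\psi$ is dominated by $u_{\max}$, which follows from $\psi \leq T^-_t \psi$ (Proposition \ref{eudomii}(3)) and the fact that $\lim_{t\to+\infty} T^-_t \psi$ is a solution (Proposition \ref{exist-sol}), hence $\leq u_{\max}$. Monotonicity of $T^-_t$ then gives
\[
T^-_t u_0 \leq T^-_t u_\alpha \leq T^-_t u_{\max} = u_{\max},
\]
and the middle term, being nondecreasing in $t$ (Proposition \ref{eudomii}(3)) and uniformly bounded (Corollary \ref{back-bd}), converges as $t \to +\infty$ to a solution $\tilde u_\alpha$ sandwiched by $u_\alpha \leq \tilde u_\alpha \leq u_{\max}$. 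Sending $\alpha \to 0^+$ pinches $\tilde u_\alpha$ to $u_{\max}$; the key step is then to transfer this to the $\alpha = 1$ limit (where $\tilde u_1 = u_-$) via the joint monotonicity of $T^-_t u_\alpha$ (nondecreasing in $t$, nonincreasing in $\alpha$) and the equi-Lipschitz bound on subsolutions from Proposition \ref{uibo}, concluding $u_- \geq u_{\max}$.

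For the minimal solution, by the forward/backward duality of Proposition \ref{rela} and a symmetric argument applied to the forward semigroup $T^+_t$, the forward limit $u_+ = \lim_{t \to +\infty} T^+_t u_0$ can be identified with the minimal forward weak KAM solution $u^+_{\min}$ constructed in Section \ref{below2}. Once this is established, the characterization (\ref{cumin}) yields
\[
v_- = \lim_{t \to +\infty} T^-_t u_+ = \lim_{t \to +\infty} T^-_t u^+_{\min} = u_{\min}.
\]

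The main obstacle is the interchange of the limits $\alpha \to 0^+$ and $t \to +\infty$ in the first step, since the stability estimate $\|T^-_t \varphi - T^-_t \psi\|_\infty \leq e^{\Theta t}\|\varphi - \psi\|_\infty$ of Proposition \ref{psg}(2) degenerates as $t \to +\infty$ and therefore does not allow a naive uniform passage. The fix I propose is to leverage the two-parameter monotonicity of $T^-_t u_\alpha$, the compactness of $M$, and the equi-Lipschitz control on subsolutions from Proposition \ref{uibo} to carry out the exchange via a Dini-type argument; the same strategy will handle the dual situation for $u_+$ in the minimal-solution step.
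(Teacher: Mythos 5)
Your strategy is genuinely different from the paper's, but the key step does not close: all the monotonicity you invoke runs in the wrong direction for the inequality you need. For $\alpha \in (0,1]$, the interpolants $u_\alpha = \alpha u_0 + (1-\alpha)u_{\max}$ are indeed strict subsolutions with $u_\alpha \leq u_{\max}$, so $T^-_t u_\alpha$ is nondecreasing in $t$, nonincreasing in $\alpha$, bounded above by $u_{\max}$, and converges to a solution $\tilde u_\alpha$ with $u_\alpha \leq \tilde u_\alpha \leq u_{\max}$. Sending $\alpha \to 0^+$ does pinch $\tilde u_\alpha$ to $u_{\max}$. But the monotonicity in $\alpha$ gives $\tilde u_1 \leq \tilde u_\alpha$ for every $\alpha < 1$, hence only $u_- = \tilde u_1 \leq \lim_{\alpha\to 0^+}\tilde u_\alpha = u_{\max}$ — the trivial direction. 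Nothing in the two-parameter monotone family rules out either a jump at $\alpha = 1$ or that the $\tilde u_\alpha$ for intermediate $\alpha$ are solutions lying strictly between $u_-$ and $u_{\max}$; Dini-type compactness upgrades pointwise to uniform convergence but cannot reverse a one-sided inequality. The same directional problem carries over verbatim to the forward semigroup in your treatment of $u_+ = u^+_{\min}$, so that leg has the same gap.

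What the paper actually does is a local contact-point argument rather than a global pinching one. Suppose a solution $w_-$ satisfies $w_- \geq u_-$ with $w_- \neq u_-$. Since $u_0 < u_- \leq w_-$, the intermediate value theorem in $\alpha$ yields some $\alpha\in(0,1)$ for which $u_\alpha := \alpha u_0 + (1-\alpha)w_-$ touches $u_-$ from below at some $x_0$, i.e. $u_\alpha \leq u_-$ with $u_\alpha(x_0) = u_-(x_0)$. Monotonicity gives $T^-_t u_\alpha(x_0) \leq T^-_t u_-(x_0) = u_-(x_0) = u_\alpha(x_0)$, while Lemma \ref{cb} forces the strict inequality $T^-_t u_\alpha(x_0) > u_\alpha(x_0)$ — a contradiction. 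This touching step is precisely what produces the nontrivial inequality, and it has no analogue in your pinching argument. The paper's second step (no solution exceeds $u_-$ anywhere, via the subsolution $\max\{u_-, w_-\}$) and the forward analogue establishing $u_+ = u^+_{\min}$ both rest on the same device. Your final reduction from $u_+ = u^+_{\min}$ to $v_- = u_{\min}$ via (\ref{cumin}) is fine, but it inherits the gap from the step identifying $u_+$.
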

\begin{proof}
We first prove that there is no solution $w_-$ different from $u_-$ such that $w_-\geq u_-$. Assume that there is such a solution $w_-$. Since $u_0<u_-\leq w_-$, there is $\alpha\in (0,1)$ such that $u_\alpha=\alpha u_0+(1-\alpha)w_-$ satisfies
\[\min_{x\in M}(u_-(x)-u_\alpha(x))=0.\]
Let $x_0\in M$ be the point at which the above minimum is attained.
 Then
\[T^-_t u_{\alpha}\leq T^-_t u_-.\]
 By Lemma \ref{cb}, we have $T^-_tu_\alpha(x_0)>u_\alpha(x_0)=u_-(x_0)=T^-_t u_-(x_0)$, which leads to a contradiction.

We then prove that $w_-\leq u_-$ for all solutions $w_-$. Assume that there is a solution $w_-$ such that
\[\max_{x\in M}(w_-(x)-u_-(x))>0.\]
Let $y_0\in M$ be the point at which the above maximum is attained.
Then the function $\bar u(x):=\max\{u_-(x),w_-(x)\}$ is a subsolution. By Proposition \ref{exist-sol}, we get a solution $\bar w_-:=\lim_{t\rightarrow+\infty}T^-_t\bar u$. We also have \[\bar w_-(y_0)\geq \bar u(y_0)=w_-(y_0)> u_-(y_0),\] which contradicts that $u_-$ is the maximal solution of (\ref{hu}).

Similar to the argument above, we conclude that $u_+$ is the minimal forward weak KAM solution of (\ref{hu}). By Lemma \ref{minlemm}, $v_-$ is the minimal solution of (\ref{hu}).
\end{proof}

Let us recall $u_0$ is a subsolution of (\ref{hu}) with $c=c_0$. For $c>c_0$, there holds   \[T_t^+u_0< u_0<T_t^-u_0.\]
By Proposition \ref{psg}(1) and Proposition \ref{T-T+>}, we have
\[T^-_{t+s}u_0\geq T^-_{t+s}\circ T^+_tu_0=T^-_s\circ (T^-_t\circ T^+_t u_0)\geq T^-_s u_0\]
for all $t,s\geq 0$. Letting $s\rightarrow +\infty$, we have
\begin{equation}\label{umaxconv}
  \lim_{s\rightarrow+\infty}T^-_{t+s}\circ T^+_tu_0=u_{\max},
\end{equation}
for each $t>0$. Let $\varphi\in C(M)$ satisfy $u^+_{\min}<\varphi\leq u_{\max}$. Since $u^+_{\min}=\lim_{t\rightarrow+\infty}T^+_tu_0$ by Lemma \ref{6.2}, there is $t_0>0$ such that $T^+_{t_0}u_0\leq \varphi$ on $M$. Then we have
\[T^-_{t_0+s}\circ T^+_{t_0}u_0\leq T^-_{t_0+s}\varphi\leq u_{\max}.\]
Letting $s\rightarrow +\infty$ and by (\ref{umaxconv}), we have \[\lim_{t\rightarrow +\infty}T^-_t\varphi=u_{\max}.\]
Now we assume ($\star$) holds. Then for each $\varphi>u^+_{\min}$, there is $\varphi_1$ and $\varphi_2$ such that
\[\varphi_1\geq u_{\max},\quad u^+_{\min}<\varphi_2\leq u_{\max},\quad \varphi_2\leq\varphi\leq \varphi_1.\]
Then we have $T^-_t\varphi_2\leq T^-_t\varphi\leq T^-_t\varphi_1$. Since $\lim_{t\to+\infty} T^-_t\varphi_i=u_{\max}$ for $i=1,2$, we have \[\lim_{t\rightarrow +\infty}T^-_t\varphi=u_{\max}.\]
The proof of Theorem \ref{mth3} is now complete.

\vskip 0.5cm

\noindent {\bf Acknowledgements:} The authors would like to thank Professor J. Yan for many helpful discussions.
Lin Wang is supported by NSFC Grant No. 12122109, 11790273.

\appendix
\section{Auxiliary results}
\subsection{Proof of Proposition \ref{eudomii}}\label{subsolution}

\begin{lemma}\label{SL}
If $\varphi$ is a Lipschitz subsolution of (\ref{hj}), then $\varphi\prec L$.
\end{lemma}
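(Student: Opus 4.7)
The plan is to establish the domination inequality
\[
\varphi(\gamma(b))-\varphi(\gamma(a))\leq \int_a^b L(\gamma(\tau),\varphi(\gamma(\tau)),\dot\gamma(\tau))\,d\tau
\]
for every absolutely continuous curve $\gamma:[a,b]\to M$ by a mollification argument. Although $\varphi$ is differentiable almost everywhere on $M$ by Rademacher, this information cannot be exploited pointwise along $\gamma$, since $\gamma^{-1}$ of a null set need not be null. I will therefore fix a standard mollifier $\rho_\epsilon$ (implemented via local charts and a partition of unity, or intrinsically via the heat kernel on $M$) and set $\varphi_\epsilon:=\varphi\ast\rho_\epsilon$. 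The $\varphi_\epsilon$ are smooth, converge uniformly to $\varphi$, satisfy $\|D\varphi_\epsilon\|_\infty\leq \mathrm{Lip}(\varphi)$, and obey $D\varphi_\epsilon=(D\varphi)\ast\rho_\epsilon$.

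The key step is to upgrade $\varphi_\epsilon$ to an approximate subsolution: I claim $H(x,\varphi_\epsilon(x),D\varphi_\epsilon(x))\leq \omega(\epsilon)$ uniformly in $x$, with $\omega(\epsilon)\to 0$. By convexity of $H$ in $p$ together with Jensen's inequality,
\[
H(x,\varphi_\epsilon(x),D\varphi_\epsilon(x))\leq \int H(x,\varphi_\epsilon(x),D\varphi(y))\,\rho_\epsilon(x-y)\,dy.
\]
I then compare the integrand with $H(y,\varphi(y),D\varphi(y))\leq 0$ (which holds a.e.\ by the subsolution property). The resulting error splits into two pieces controlled respectively by the uniform continuity of $H$ in $x$ on the compact set where $|u|$ and $\|p\|$ are bounded, and by the Lipschitz-in-$u$ assumption applied to $|\varphi_\epsilon(x)-\varphi(y)|\leq \|\varphi_\epsilon-\varphi\|_\infty+\mathrm{Lip}(\varphi)\epsilon$; both tend to zero with $\epsilon$.

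For smooth $\varphi_\epsilon$, the Fenchel-Young inequality yields, pointwise along $\gamma$,
\[
D\varphi_\epsilon(\gamma(\tau))\cdot\dot\gamma(\tau)\leq L(\gamma(\tau),\varphi_\epsilon(\gamma(\tau)),\dot\gamma(\tau))+H(\gamma(\tau),\varphi_\epsilon(\gamma(\tau)),D\varphi_\epsilon(\gamma(\tau))).
\]
Integrating, combining with the bound from the previous step, and applying the Lipschitz-in-$u$ estimate $L(\gamma,\varphi_\epsilon\circ\gamma,\dot\gamma)\leq L(\gamma,\varphi\circ\gamma,\dot\gamma)+\Theta\|\varphi_\epsilon-\varphi\|_\infty$ gives
\[
\varphi_\epsilon(\gamma(b))-\varphi_\epsilon(\gamma(a))\leq \int_a^b L(\gamma(\tau),\varphi(\gamma(\tau)),\dot\gamma(\tau))\,d\tau+(b-a)\bigl(\omega(\epsilon)+\Theta\|\varphi_\epsilon-\varphi\|_\infty\bigr).
\]
Letting $\epsilon\to 0^+$ produces the desired inequality; the case when the right-hand side is $+\infty$ is vacuous, and when it is finite the convergence on the left is immediate from the uniform convergence $\varphi_\epsilon\to\varphi$.

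The main obstacle is the Jensen/mollification step: it is the only place where all three hypotheses on $H$ (continuity in $x$, convexity in $p$, Lipschitz in $u$) must be coordinated simultaneously, and it is precisely what distinguishes the present $u$-dependent (contact) setting from the classical case where the argument is standard. The remainder is essentially bookkeeping. Making the mollification rigorous on the manifold is technical but routine and does not affect the core estimate.
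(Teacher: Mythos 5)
Your argument is correct, but it takes a genuinely different route from the paper. The paper works in local coordinates and invokes a chain-rule result for Lipschitz functions along absolutely continuous curves (\cite{Ish3}, Proposition 2.4), which produces a measurable selection $q(s)\in\partial_c\varphi(\gamma(s))$ of Clarke's generalized gradient with $\frac{d}{ds}\varphi(\gamma(s))=q(s)\cdot\dot\gamma(s)$ a.e.; convexity of $H$ in $p$, together with the definition of $\partial_c\varphi$ as a limit of convex hulls of nearby true gradients, then yields $H(x,\varphi(x),q)\leq 0$ for every $q\in\partial_c\varphi(x)$, and Fenchel-Young finishes. Your proof instead mollifies $\varphi$ to a smooth $\varphi_\epsilon$, upgrades it to an approximate subsolution with error $\omega(\epsilon)$ via Jensen plus uniform continuity of $H$ in $x$ and Lipschitz dependence on $u$, applies Fenchel-Young pointwise to the smooth function, and passes to the limit. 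Both routes coordinate the same three hypotheses (continuity in $x$, convexity in $p$, Lipschitz in $u$); yours avoids nonsmooth-analysis machinery at the cost of managing $\epsilon$-errors and the manifold mollification, while the paper's is shorter once the Clarke-gradient chain rule is granted. You also correctly note the vacuous case when $\int L$ is $+\infty$ (relevant because $L$ may be extended-valued under the standing hypotheses), a point the paper treats implicitly. One small remark: as stated, your Jensen step tacitly assumes a Euclidean structure; your parenthetical about doing this via charts and a partition of unity (or the heat kernel) is the right fix and is no more delicate than the paper's own reduction to charts.
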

\begin{proof}
Without loss of generality, we assume $M$ is an open set of $\R^n$. In fact, for each absolutely continuous curve $\gm:[0,t]\rightarrow M$, we use a covering of it by local coordinate charts. Clearly, there exists $N\in \mathbb{N}$ such that $[0,t]=\cup_{i=0}^{N-1}[t_{i},t_{i+1}]$ with $t_0=0$, $t_{N}=t$, such that $\gm|_{[t_{i},t_{i+1}]}$ is contained in an open subset of $\mathbb R^n$.

By \cite[Proposition 2.4]{Ish3}, there is a function $q\in L^\infty([0,t],\mathbb R^n)$ such that for almost all $s\in[0,t]$, we have
\begin{equation*}
  \frac{d}{ds}\varphi(\gm(s))=q(s)\cdot \dot{\gamma}(s),
\end{equation*}
and the vector $q(s)$ belongs to $\partial_c \varphi(\gm(s))$. Here we recall the definition of the Clarke's generalized gradient
\[\partial_c\varphi(x):=\bigcap_{r>0}\overline{\textrm{co}}\{D\varphi(y):\ y\in B(x,r),\ \textrm{and}\ \varphi\ \textrm{is\ differentiable\ at}\ y\},\]
where $\overline{\textrm{co}}$ stands for the closure of the convex combination. Since $\varphi$ is a Lipschitz subsolution of (\ref{hj}), if $\varphi$ is differentiable at $y$, we have
\[H(y,\varphi(y),D\varphi(y))\leq 0.\]
By the convexity of $H$ with respect to $p$, and the definition of $\partial_c\varphi(x)$, we have
\[H(x,\varphi(x),q)\leq 0,\quad \forall q\in\partial_c\varphi(x).\]
We conclude that
\begin{align*}
\varphi(\gm(t))-\varphi(\gm(0))&=\int_0^t\frac{d}{ds}\varphi(\gm(s))ds=\int_0^t q(s)\cdot\dot{\gm}(s)ds
\\ &\leq \int_0^t\bigg[L(\gm(s),\varphi(\gm(s)),\dot{\gm}(s))+H(\gm(s),\varphi(\gm(s)),q(s))\bigg]ds
\\ &\leq \int_0^tL(\gm(s),\varphi(\gm(s)),\dot{\gm}(s))ds,
\end{align*}
which implies $\varphi\prec L$.
\end{proof}

\begin{lemma}\label{strss}
If $\varphi\prec L$, then for each $t\geq 0$, we have $T_t^-\varphi\geq \varphi\geq T_t^+\varphi$. Moreover, if there exists $\epsilon_0>0$ such that for a.e. $x\in M$,
\[H(x,u,Du)+\epsilon_0\leq 0.\]
then \[T_t^+\varphi< \varphi<T_t^-\varphi.\]
\end{lemma}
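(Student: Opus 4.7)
The plan is to reduce each of the four inequalities to a one-dimensional ODE comparison along a curve calibrating $T_{t_0}^{\pm}\varphi(x_0)$, using only the Lipschitz constant $\Theta$ of $L$ in $u$ coming from \textbf{(LIP)}. For $T_t^-\varphi\geq\varphi$ I would fix $x_0\in M$ and $t_0>0$ and pick a minimizer $\gamma^\ast\colon[0,t_0]\to M$ with $\gamma^\ast(t_0)=x_0$ in the variational formula for $T_{t_0}^-\varphi(x_0)$; existence is standard from coercivity and lower semicontinuity, and otherwise I would work with $\varepsilon$-minimizers and pass to the limit at the end. A routine cut-and-paste argument shows every restriction $\gamma^\ast|_{[0,s]}$ is itself a minimizer for $T_s^-\varphi(\gamma^\ast(s))$, so dynamic programming yields the calibration identity
\[
T_s^-\varphi(\gamma^\ast(s))=\varphi(\gamma^\ast(0))+\int_0^s L(\gamma^\ast(\tau),T_\tau^-\varphi(\gamma^\ast(\tau)),\dot\gamma^\ast(\tau))\,d\tau,\qquad s\in[0,t_0].
\]
Setting $f(s):=T_s^-\varphi(\gamma^\ast(s))$ and $g(s):=\varphi(\gamma^\ast(s))$, both are absolutely continuous with $f'(s)=L(\gamma^\ast,f,\dot\gamma^\ast)$ (from calibration) and $g'(s)\leq L(\gamma^\ast,g,\dot\gamma^\ast)$ (from $\varphi\prec L$) a.e., and $f(0)=g(0)$.

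Subtracting and applying (LIP), the function $F:=f-g$ satisfies $F(0)=0$ and $F'(s)\geq -\Theta|F(s)|$ a.e. To conclude $F\geq 0$ I would argue by contradiction: if $F(s^\ast)<0$ for some $s^\ast>0$, then $s_1:=\sup\{s\leq s^\ast:F(s)\geq 0\}$ satisfies $F(s_1)=0$ by continuity and $F<0$ on $(s_1,s^\ast]$; there the bound collapses to $F'\geq \Theta F$, equivalently $(e^{-\Theta s}F(s))'\geq 0$, so integration from $s_1$ yields $F\geq 0$ on $(s_1,s^\ast]$, a contradiction. Evaluating at $s=t_0$ then gives $T_{t_0}^-\varphi(x_0)\geq\varphi(x_0)$. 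For $T_t^+\varphi\leq\varphi$ I would run the symmetric construction: take a maximizer $\gamma^\ast$ with $\gamma^\ast(0)=x_0$ for $T_{t_0}^+\varphi(x_0)$, set $g(s):=T_{t_0-s}^+\varphi(\gamma^\ast(s))$, derive $g'(s)=L(\gamma^\ast,g,\dot\gamma^\ast)$ a.e.\ with boundary value $g(t_0)=\varphi(\gamma^\ast(t_0))$, and run the Gronwall argument backward from $t_0$ by noting that $e^{\Theta s}(g-\varphi\circ\gamma^\ast)$ is nondecreasing on any interval where that difference is positive, forcing it to stay nonpositive.

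For the strict refinement, the assumption $H(x,\varphi,D\varphi)+\epsilon_0\leq 0$ upgrades the proof of Lemma~\ref{SL} to the sharper domination
\[
\varphi(\gamma(b))-\varphi(\gamma(a))\leq \int_a^b\bigl[L(\gamma,\varphi(\gamma),\dot\gamma)-\epsilon_0\bigr]d\tau.
\]
The inequality for $F$ then improves to $F'\geq \epsilon_0-\Theta|F|$, and combined with $F\geq 0$ already proved it becomes $(e^{\Theta s}F(s))'\geq \epsilon_0 e^{\Theta s}$; integrating from $0$ would deliver the explicit strict estimate $F(t)\geq (\epsilon_0/\Theta)(1-e^{-\Theta t})>0$ for all $t>0$, and the symmetric version gives $T_t^+\varphi<\varphi$. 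The main obstacle throughout is the non-monotone dependence of $L$ on $u$, which forbids any direct comparison between $f$ and $g$; it is precisely the coupling of calibration along a minimizer with the one-sided Lipschitz bound (LIP) that reduces the whole question to the scalar Gronwall dichotomy outlined above.
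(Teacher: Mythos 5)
Your argument for the non-strict inequalities $T_t^-\varphi\geq\varphi\geq T_t^+\varphi$ is essentially the same as the paper's: both pick a minimizer/maximizer $\gamma^\ast$, exploit the calibration identity on $[0,t_0]$ (cut-and-paste) together with $\varphi\prec L$ to get that the difference $F$ (resp.\ $G$) between the calibrated value and $\varphi\circ\gamma^\ast$ satisfies a one-sided $\Theta$-Lipschitz differential (or equivalently integral) inequality with zero initial/terminal datum, and then close via Gronwall. The paper phrases this as an integral inequality $F(s)\leq\Theta\int_{s_0}^s F$ after locating a crossing point $s_0$, whereas you write the differential form $F'\geq-\Theta|F|$ and monotonicity of $e^{\mp\Theta s}F$; these are the same estimate. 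Where you genuinely diverge from the paper is the strict case. The paper sets $\tilde L:=L-\epsilon_0$, applies the non-strict result to $\tilde T_t^-$ to get $\tilde T_t^-\varphi\geq\varphi$, and then invokes an external strict comparison ($\tilde L<L\Rightarrow \tilde T_t^-\varphi<T_t^-\varphi$, citing an argument analogous to \cite[Proposition 3.1]{WWY1}). You instead fold the $\epsilon_0$ directly into the Gronwall inequality, obtaining $F'\geq\epsilon_0-\Theta F$ and hence the explicit lower bound $F(t)\geq(\epsilon_0/\Theta)(1-e^{-\Theta t})>0$. Your route is more self-contained (no appeal to a separate strict-monotonicity-in-$L$ result) and quantitative, at the modest cost of redoing the Gronwall computation; the paper's route is shorter given that the auxiliary comparison result is already available in the literature. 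Both are correct. One small point worth being explicit about in a write-up: the calibration identity at \emph{intermediate} times $s\in(0,t_0)$, i.e.\ that $T_s^-\varphi(\gamma^\ast(s))=\varphi(\gamma^\ast(0))+\int_0^s L(\gamma^\ast,T_\tau^-\varphi(\gamma^\ast),\dot\gamma^\ast)\,d\tau$, is used by both arguments and is slightly nontrivial for the implicit Lax--Oleinik formula (\ref{T-}); it is established in \cite{NWY} and should be cited rather than dismissed as routine, especially since $L$ may take the value $+\infty$ here so existence of minimizers is itself not immediate from coercivity alone.
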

\begin{proof}

In the following, we only prove $T_t^-\varphi\geq \varphi$ for each $t\geq 0$, since the proof of  $T_t^+\varphi\leq \varphi$  is similar. By contradiction, we assume there exists $x_0\in M$ such that $\varphi(x_0)>T_t^-\varphi(x_0)$. Let $\gm:[0,t]\rightarrow M$ be a minimizer of $T_t^-\varphi$ with $\gm(t)=x_0$, i.e.
\begin{equation}
T_t^-\varphi(x)=\varphi(\gm(0))+\int_0^tL(\gm(\tau),T_\tau^- \varphi(\gm(\tau)),\dot{\gm}(\tau))d\tau.
\end{equation}
Let $F(\tau):=\varphi(\gm(\tau))-T_\tau^- \varphi(\gm(\tau))$. Since $F(t)>0$ and $F(0)=0$, then one can find $s_0\in [0,t)$ such that $F(s_0)=0$ and $F(s)>0$ for $s\in (s_0,t]$. A direct calculation shows
\[F(s)\leq \Theta\int_{s_0}^sF(\tau)d\tau,\]
which implies $F(s)\leq 0$ for $s\in (s_0,t]$ from the Gronwall inequality. It contradicts $F(t)>0$.

Next, we assume there exists $\epsilon_0>0$ such that for a.e. $x\in M$,
\[H(x,u,Du)+\epsilon_0\leq 0.\]
Let us denote
\[\tilde{L}(x,u,\dot{x}):=L(x,u,\dot{x})-\epsilon_0,\]
and let $\tilde{T}_t^-$ be the Lax-Oleinik semigroup associated to $\tilde{L}$. By a similar argument above, we have $\tilde{T}_t^-\varphi\geq \varphi$ and $\tilde{T}_t^+\varphi\leq \varphi$. Note that $\tilde{L}<L$. Using a similar argument as \cite[Proposition 3.1]{WWY1}, $\tilde{T}_t^-\varphi<{T}_t^-\varphi$ and $\tilde{T}_t^+\varphi>{T}_t^+\varphi$ for each $t>0$. Therefore, $T_t^-\varphi> \varphi$ and $T_t^+\varphi< \varphi$  for each $t> 0$. This completes the proof.
\end{proof}

\begin{lemma}
If for each $t>0$, $T^-_t\varphi\geq \varphi$, then $\varphi$ is a Lipschitz subsolution of (\ref{hu}).
\end{lemma}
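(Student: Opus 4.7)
The plan is to verify that $\varphi$ satisfies the viscosity subsolution condition for (\ref{hj}). Let $\psi \in C^1(M)$ and suppose $\varphi - \psi$ attains a local maximum at some $x_0 \in M$. The goal is to show $H(x_0,\varphi(x_0),D\psi(x_0)) \leq 0$. The idea is to test the hypothesis $T^-_t\varphi \geq \varphi$ against short curves terminating at $x_0$ with prescribed velocity, then recover the Hamiltonian bound by Fenchel--Legendre duality.

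Working in a coordinate chart around $x_0$, I would fix any velocity $v$ such that $(x_0,\varphi(x_0),v)$ lies in the interior of $\mathrm{dom}(L)$ --- such $v$ are available since the backward semigroup is well-defined on $C(M)$, which forces $\mathrm{dom}(L)$ to have nonempty interior (concretely from Lemma \ref{CL} and its $u$-dependent analogue) --- and take the affine curve $\gamma(\tau) = x_0 + (\tau - t)v$, $\tau \in [0,t]$, with $t > 0$ small. Since $\gamma(t) = x_0$, the hypothesis combined with (\ref{T-}) yields
\[
\varphi(x_0) - \varphi(\gamma(0)) \;\leq\; \int_0^t L\bigl(\gamma(\tau),T^-_\tau\varphi(\gamma(\tau)),v\bigr)\,d\tau.
\]
The local maximum condition gives $\varphi(x_0) - \varphi(\gamma(0)) \geq \psi(x_0) - \psi(\gamma(0))$, so after dividing by $t$
\[
\frac{\psi(x_0) - \psi(\gamma(0))}{t} \;\leq\; \frac{1}{t}\int_0^t L\bigl(\gamma(\tau),T^-_\tau\varphi(\gamma(\tau)),v\bigr)\,d\tau.
\]

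Sending $t \to 0^+$, the left-hand side converges to $\langle D\psi(x_0),v\rangle$ by $C^1$-smoothness of $\psi$. For the right-hand side, continuity of $t \mapsto T^-_t\varphi$ at $t = 0$ (with $T^-_0\varphi = \varphi$) yields $T^-_\tau\varphi(\gamma(\tau)) \to \varphi(x_0)$ uniformly on $[0,t]$, and (LSC) upgrades to honest continuity of $L$ at the interior point $(x_0,\varphi(x_0),v)$; hence the averaged integral tends to $L(x_0,\varphi(x_0),v)$. This produces
\[
\langle D\psi(x_0),v\rangle \;\leq\; L(x_0,\varphi(x_0),v)
\]
for every $v$ in the interior of $\mathrm{dom}(L)$ at $(x_0,\varphi(x_0))$. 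Since $L(x_0,\varphi(x_0),\cdot)$ is convex and lower semicontinuous with nonempty interior domain, its Legendre conjugate $H(x_0,\varphi(x_0),\cdot)$ is realized as the supremum over that interior; taking $\sup_v$ on both sides therefore yields $H(x_0,\varphi(x_0),D\psi(x_0)) \leq 0$, as required. The main delicate point is passing to the limit inside the integral under only (LSC) of $L$; I sidestep this by restricting $v$ to the interior of $\mathrm{dom}(L)$, where genuine continuity holds, and recovering boundary directions implicitly through the duality step rather than through a direct limit.
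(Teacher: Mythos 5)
Your proof is correct but takes a genuinely different route from the paper's. Both arguments must confront the same difficulty: under (CER) alone the Lagrangian $L$ may be $+\infty$ off a neighborhood of the zero section and is only lower semicontinuous there. The paper truncates the Hamiltonian, setting $H_R(x,u,p):=H(x,u,p)+\max\{\|p\|^2-R^2,0\}$ so that the corresponding $L_R$ is everywhere finite and continuous, then invokes the gradient bound $\|\partial_x T^-_t\varphi\|_\infty\leq R$ from \cite{NWY} to identify $T^-_t\varphi$ with the semigroup $T^R_t\varphi$ generated by $L_R$; it proceeds by testing with forward $C^1$ curves at a.e.\ differentiability point of $\varphi$ (Rademacher), obtains $H_R(x,\varphi(x),D\varphi(x))\leq 0$ a.e., and finally passes from the $H_R$-subsolution property back to the $H$-subsolution property by stability. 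You instead argue directly in the viscosity framework with a test function $\psi$, restrict the test velocities $v$ to the interior of $\mathrm{dom}(L)$ where (LSC) gives honest continuity so the averaged integral converges, and recover the full Hamiltonian inequality at the end via the convex-duality fact that the Legendre conjugate of a proper lsc convex function is already realized as the supremum over the interior of its effective domain. Your route bypasses both the nontrivial gradient estimate from \cite{NWY} and the a.e.-to-viscosity passage, while the paper's truncation, once the gradient bound is granted, lets one manipulate an everywhere-continuous Lagrangian without any domain bookkeeping. Both are valid; neither is obviously shorter.
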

\begin{proof}
Fix $T>0$, by assumption we have $T^-_t\varphi\geq \varphi$ for each $t\in[0,T]$. By \cite{NWY}, there is a constant $R_0>0$ depending on $T$ and $\|D\varphi\|_\infty$, such that $\|D T^-_t\varphi(x)\|_\infty\leq R_0$. Let {$R:=\max\{R_0,\|D\varphi\|_\infty\}$}, we make a modification
\[H_R(x,u,p):=H(x,u,p)+\max\{\|p\|^2-R^2,0\}.\]
Then $T^-_t\varphi$ is also the solution of (\ref{C}) with $H$ replaced by $H_R$. One can prove that the Lagrangian $L_R$ corresponding to $H_R$ is continuous. By the uniqueness of the solution of (\ref{C}), we have $T^-_t\varphi=T^R_t\varphi$, where $T^R_t\varphi$ is defined by (\ref{T-}) with $L$ replaced by $L_R$.

Let $\varphi$ be differentiable at $x\in M$. For each $v\in T_x M$, there is a $C^1$ curve $\gm:[0,T]\rightarrow M$ with $\gm(0)=x$ and $\dot \gm(0)=v$. By assumption for each $t\in [0,T]$, we have
\[\varphi(\gm(t))\leq T^-_t\varphi(\gm(t))=T^R_t\varphi(\gm(t))\leq \varphi(x)+\int_0^tL_R(\gm(s),T^R_s\varphi(\gm(s)),\dot \gm(s))ds.\]
Dividing by $t$ and let $t$ tend to zero, using the continuity of $\gm$, $L_R$ and $T^R_t\varphi(x)$. We get
\[D\varphi(x)\cdot v\leq L_R(x,\varphi(x),v).\]
Since $v$ is arbitrary, we have
\begin{equation*}
  H_R(x,\varphi(x),D\varphi(x))=\sup_{v\in T_x M}\bigg[D\varphi(x)\cdot v-L_R(x,\varphi(x),v)\bigg]\leq 0.
\end{equation*}
Therefore, $\varphi$ is a Lipschitz subsolution of \[H_R(x,u(x),Du(x))=0.\]
{By the definition of $H_R$}, $\varphi$ is also a Lipschitz subsolution of (\ref{hj}).
\end{proof}
\subsection{Proof of Proposition \ref{T-T+>}}\label{tt}

We only prove $\varphi\leq T^-_t\circ T^+_t \varphi$, the other side is similar. We argue by a contradiction. Assume that there is $x\in M$ and $t>0$ such that
\[T^-_t\circ T^+_t \varphi(x)< \varphi(x).\]
Let $\gm:[0,t]\rightarrow M$ with $\gm(t)=x$ be a minimizer of $T^-_t\circ T^+_t \varphi(x)$, and define
\[F(s):=T^+_{t-s}\varphi(\gm(s))-T^-_s\circ T^+_t \varphi(\gm(s)).\]
Then $F(0)=0$ and $F(t)>0$. By continuity, there is $\sigma\in[0,t)$ such that $F(\sigma)=0$ and $F(\tau)>0$ for all $\tau\in (\sigma,t]$. By definition, for $s\in(\sigma,t]$ we have
\begin{equation*}
\begin{aligned}
  T^-_s\circ T^+_t\varphi(\gm(s))&=T^-_\sigma\circ T^+_t\varphi(\gm(\sigma))+\int_\sigma^sL(\gm(\tau),T^-_\tau\circ T^+_t\varphi(\gm(\tau)),\dot\gm(\tau))d\tau
  \\ &=T^+_{t-\sigma}\varphi(\gm(\sigma))+\int_\sigma^s L(\gm(\tau),T^-_\tau\circ T^+_t\varphi(\gm(\tau)),\dot\gm(\tau))d\tau
  \\ &\geq T^+_{t-s}\varphi(\gm(s))-\int_\sigma^s L(\gm(\tau),T^+_{t-\tau}\varphi(\gm(\tau)),\dot\gm(\tau))d\tau
  \\ &\quad \quad\quad  +\int_\sigma^sL(\gm(\tau),T^-_\tau\circ T^+_t\varphi(\gm(\tau)),\dot\gm(\tau))d\tau
  \\ &\geq T^+_{t-s}\varphi(\gm(s))-\Theta\int_\sigma^s F(\tau)d\tau,
\end{aligned}
\end{equation*}
which implies
\[F(s)\leq \Theta\int_\sigma^s F(\tau)d\tau.\]
By the Gronwall inequality, we have $F(s)\equiv 0$ for $s\in[\sigma,t]$, which contradicts $F(t)>0$.

\subsection{Proof of Proposition \ref{mth}}\label{A}


\subsubsection{$c_0$ and subsolutions}\label{c0}
Inspired by \cite{CIPP}, we denote
\[c_0:=\inf_{u\in C^\infty(M)}\sup_{x\in M}\bigg\{H(x,Du)+\lambda(x)u\bigg\}.\]

\begin{proposition}\label{finite}
$c_0$ is finite.
\end{proposition}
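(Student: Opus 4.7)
The plan is to show $c_0 \in \mathbb{R}$ by separately proving an upper and a lower bound. The upper bound is immediate: testing the infimum with the constant function $u \equiv 0$ gives $c_0 \leq \sup_{x \in M} H(x,0)$, which is finite since $H$ is continuous and $M$ is compact.

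For the lower bound, I would argue by contradiction. Suppose $c_0 = -\infty$; then for every $c$ (taken arbitrarily negative) there exists $u \in C^\infty(M)$ with $H(x,Du)+\lambda(x)u \leq c$ on $M$. Let $\mathbf{e}_0 := \inf_{T^*M} H$, which is finite by (C) and (CER). Using the $(\pm)$ assumption and $H \geq \mathbf{e}_0$, the subsolution inequality evaluated at $x_1$ and $x_2$ gives
\[
u(x_1) \leq \frac{c - \mathbf{e}_0}{\lambda(x_1)} \to -\infty, \qquad u(x_2) \geq \frac{c - \mathbf{e}_0}{\lambda(x_2)} \to +\infty \quad \text{as } c \to -\infty.
\]
I would then contradict the resulting unbounded gap between $u(x_2)$ and $u(x_1)$ via a transport estimate along a curve connecting $x_1$ and $x_2$.

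To this end, Lemma \ref{CL} together with Fenchel duality (taking $\xi = \delta\, p/\|p\|$ in the defining formula for $L$) produces the linear-growth lower bound $H(x,p) \geq \delta\|p\| - C_L$. Along a constant-speed geodesic $\gamma:[0,T] \to M$ from $x_1$ to $x_2$ with $\|\dot\gamma\| \equiv \delta$ and $T = d(x_1,x_2)/\delta \leq \mu$, the function $v(t) := u(\gamma(t))$ satisfies
\[
v'(t) \leq \delta\,\|Du(\gamma(t))\| \leq c + C_L - \lambda(\gamma(t))\, v(t).
\]
Multiplying by the integrating factor $e^{A(t)}$, with $A(t) := \int_0^t \lambda(\gamma(s))\,ds$ (so $|A(t)| \leq \Lambda\mu$ for $\Lambda := \|\lambda\|_\infty$), and integrating over $[0,T]$ yields
\[
u(x_2) \leq e^{-A(T)}\, u(x_1) + (c + C_L)\, e^{-A(T)} \int_0^T e^{A(s)}\,ds.
\]
Since $e^{-A(T)} > 0$, substituting the upper bound $u(x_1) \leq (c-\mathbf{e}_0)/\lambda(x_1)$ makes the right-hand side tend to $-\infty$ linearly in $c$, contradicting the lower bound $u(x_2) \geq (c-\mathbf{e}_0)/\lambda(x_2) \to +\infty$ for sufficiently negative $c$. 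Hence $c_0 > -\infty$.

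The main obstacle is choreographing the transport estimate so that the pointwise bound at $x_1$ propagates in the correct direction to a bound at $x_2$; the positivity of the integrating factor $e^{A(t)}$ is essential because it preserves the direction of the inequality. The other crucial ingredient is Lemma \ref{CL}: its linear-growth bound on $H$ closes the differential inequality for $v$ by controlling $\delta\|Du\|$ in terms of $v$ itself, which bypasses the need for the stronger superlinearity imposed in $(*)$.
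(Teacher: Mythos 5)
Your proof is correct, but it takes a substantially longer route than the paper's. For the lower bound, the paper exploits the connectedness of $M$ together with the sign-change assumption $(\pm)$: since $\lambda$ is continuous and changes sign, there is a point $x_0$ with $\lambda(x_0)=0$, and then trivially
\[
\sup_{x\in M}\bigl\{H(x,Du(x))+\lambda(x)u(x)\bigr\}\ \geq\ H(x_0,Du(x_0))+\lambda(x_0)u(x_0)\ =\ H(x_0,Du(x_0))\ \geq\ \mathbf{e}_0
\]
for every $u\in C^\infty(M)$, giving $c_0\geq\mathbf{e}_0$ in one line with no contradiction argument, no transport estimate, and no use of Lemma \ref{CL}. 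Your argument by contradiction --- deriving opposite-signed divergences at $x_1$ and $x_2$ and then transporting along a geodesic using the linear-growth bound $H(x,p)\geq\delta\|p\|-C_L$ and an integrating factor --- does close, and it illustrates the same mechanism that drives the $W^{1,\infty}$ a~priori estimates later in the paper (Proposition \ref{uibo}). But for the specific claim that $c_0>-\infty$ it is overkill: the zero of $\lambda$ kills the $u$-term outright, which is the observation you did not exploit. Also note a minor point in your Fenchel step: to get $\langle\xi,p\rangle=\delta\|p\|$ you should take $\xi$ to be $\delta$ times the metric dual of $p$ (not $p$ itself, which lives in $T^*_xM$ rather than $T_xM$); the estimate is otherwise fine.
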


\begin{proof}
Choose $u(x)\equiv 0$, then by definition,
\[
c_0\leq \sup_{x\in M}H(x,0)<+\infty.
\]
Let us recall
\begin{equation*}
\mathbf{e}_0:=\min_{(x,p)\in T^*M}H(x,p)>-\infty.
\end{equation*}
By the assumption ($\pm$), there exists $x_0\in M$ such that $\lambda(x_0)=0$. Thus for each $u\in C^\infty(M)$,
\begin{align*}
c_0=&\inf_{u\in C^\infty(M)}\sup_{x\in M}\,\,\bigg\{H(x,Du(x))+\lambda(x)u(x)\bigg\}\\
\geqslant&\,\inf_{u\in C^\infty(M)}\,\,\bigg\{H(x_0,Du(x_0))+\lambda(x_0)u(x_0)\bigg\}\\
=&\,\inf_{u\in C^\infty(M)}H(x_0,Du(x_0))\geqslant\mathbf{e}_0.
\end{align*}
This means $c_0$ is finite.
\end{proof}

\begin{proposition}\label{ns}
For $c<c_0$, (\ref{hu}) has no  continuous subsolutions.
\end{proposition}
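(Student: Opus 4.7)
The plan is to argue by contradiction. Suppose for some $c<c_0$ there exists a continuous viscosity subsolution $\varphi\in C(M)$ of (\ref{hu}). I will smooth $\varphi$ into a sequence $\varphi_\epsilon\in C^\infty(M)$ satisfying
\[
H(x,D\varphi_\epsilon(x))+\lambda(x)\varphi_\epsilon(x)\leq c+\eta(\epsilon),\quad x\in M,
\]
with $\eta(\epsilon)\to 0$. Testing such $\varphi_\epsilon$ in the definition of $c_0$ yields $c_0\leq c+\eta(\epsilon)$ for every $\epsilon>0$, hence $c_0\leq c$, contradicting the hypothesis.

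First I would show that any continuous subsolution $\varphi$ of (\ref{hu}) is Lipschitz. Since $M$ is compact and $\varphi$ is continuous, $\lambda\varphi$ is bounded, so the subsolution inequality forces every reachable gradient $p\in D^*\varphi(x)$ to satisfy $H(x,p)\leq c-\lambda(x)\varphi(x)\leq C$. Coercivity \textbf{(CER)} then gives a uniform bound $\|p\|\leq R$, so $\varphi$ is $R$-Lipschitz. In particular, $\varphi$ is a.e.\ differentiable and the inequality $H(x,D\varphi(x))+\lambda(x)\varphi(x)\leq c$ holds at every point of differentiability.

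Next I would mollify $\varphi$ by standard convolution in local charts, patched via a smooth partition of unity, producing $\varphi_\epsilon\in C^\infty(M)$ with $\varphi_\epsilon\to\varphi$ uniformly and $\|D\varphi_\epsilon\|_\infty\leq R+o(1)$. Convexity of $H$ in $p$ from \textbf{(CON)} together with Jensen's inequality applied to the mollifier $\rho_\epsilon$ gives, in a chart,
\[
H(x,D\varphi_\epsilon(x))\leq \int \rho_\epsilon(x-y)\,H(x,D\varphi(y))\,dy.
\]
Using the uniform continuity of $H$ on the compact set $M\times\overline{B}(0,R+1)$, one replaces $H(x,D\varphi(y))$ by $H(y,D\varphi(y))+\omega_H(\epsilon)$, and the uniform continuity of $\lambda$ and $\varphi$ similarly yields $\lambda(x)\varphi_\epsilon(x)\leq \lambda(y)\varphi(y)+\omega(\epsilon)$ inside the integral. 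Combining these with the a.e.\ pointwise inequality from Step 1 produces the desired estimate with a uniform error $\eta(\epsilon)\to 0$.

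The main obstacle is the mollification step on the Riemannian manifold $M$: one must verify that when the local convolutions are glued through a partition of unity, the additional terms introduced by differentiating the cutoffs remain of order $o(1)$ rather than $O(1)$, and that Jensen's inequality applies cleanly to the full derivative $D\varphi_\epsilon$. Once the chart-based mollifier is set up so that $D\varphi_\epsilon(x)$ is a convex combination of values of $D\varphi$ in an $\epsilon$-neighborhood of $x$, the Lipschitz bound from Step 1 and the moduli of continuity of $H$, $\lambda$, and $\varphi$ assemble into the error $\eta(\epsilon)$ through a routine computation.
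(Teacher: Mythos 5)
Your argument is correct and follows essentially the same route as the paper: derive Lipschitz regularity of the subsolution from coercivity \textbf{(CER)}, smooth it into a near-subsolution with error $\eta(\epsilon)\to 0$, and contradict the infimum defining $c_0$. The paper simply cites \cite[Lemma 2.2]{DFIZ} for the mollification step (convolution plus Jensen's inequality for the convex Hamiltonian) rather than reproving it, but this is exactly the argument you sketch.
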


\begin{proof} By contradiction, we assume for $c<c_0$, (\ref{hu}) admits a continuous subsolution $u:M\rightarrow\R$. By the definition of the subsolution,  for any $p\in D^+ u(x)$,
\begin{align*}
H(x,p)\leq c-\lambda(x)u(x)\leq c+\lambda_0\|u\|_\infty.
\end{align*}
Combining (CER), one can conclude that $u$ is Lipschitz continuous (see \cite[Proposition 1.14]{Ishii_chapter} for more details). By \cite[Lemma 2.2]{DFIZ},   for all $\varepsilon>0$, there exists $u_\varepsilon\in C^{\infty}(M)$ such that $\|u-u_\varepsilon\|_\infty<\varepsilon$ and for all $x\in M$,
\[
H(x,Du_\varepsilon(x))+\lambda(x)u(x)\leq c+\varepsilon.
\]
We choose $\varepsilon=\frac{1}{2(1+\lambda_0)}(c_0-c)>0$, then
\begin{align*}
&H(x,Du_\varepsilon(x))+\lambda(x)u_\varepsilon(x)\\
\leqslant&\,H(x,Du_\varepsilon(x))+\lambda(x)u(x)+\lambda_0\|u-u_\varepsilon\|_\infty\\
\leqslant&\,c+(1+\lambda_0)\varepsilon<c_0,
\end{align*}
this contradicts the definition of $c_0$.
\end{proof}

\subsubsection{Existence of subsolutions and solutions}\label{exis}

Let us recall that $T_t^{\pm}$  denote the Lax-Oleinik semigroups associated to
\[L(x,\dot{x})-\lambda(x)u(x)+c.\]

\begin{proposition}\label{exist-sub}
For $c\geq c_0$, (\ref{hu}) has a Lipschitz subsolution. Let $u_0$ be a subsolution of (\ref{hu}) with $c=c_0$. For $c>c_0$, there holds   \[T_t^+u_0< u_0<T_t^-u_0.\]
\end{proposition}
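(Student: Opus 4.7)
I would split the proposition into two parts.

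For existence of a Lipschitz subsolution at $c \geq c_0$, the case $c > c_0$ is immediate from the definition of the infimum: applying it with $\varepsilon = c - c_0 > 0$ produces some $u^c \in C^\infty(M)$ with $\sup_{x \in M}\{H(x, Du^c) + \lambda(x) u^c\} \leq c_0 + \varepsilon = c$, which is a smooth (hence Lipschitz) subsolution of (\ref{hu}) at level $c$. The critical case $c = c_0$ I would handle by approximation: choose $c_n \downarrow c_0$ and let $u_n$ be a smooth subsolution at level $c_n$ produced as above. Proposition \ref{uibo} bounds each $\|u_n\|_{W^{1,\infty}}$; tracing the constants back through Corollary \ref{back-bd} and Lemma \ref{ubd} (the quantities $K_0$, $L_0$, and the Gronwall factor), they depend continuously on $c$, hence are uniform for $c_n \in [c_0, c_0 + 1]$. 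By Arzel\`a--Ascoli a subsequence converges uniformly to some Lipschitz $u^\star$, and the standard stability of viscosity subsolutions under uniform limits shows $u^\star$ is a Lipschitz subsolution of (\ref{hu}) at $c_0$.

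For the strict inequality, fix a subsolution $u_0$ at $c = c_0$, which is Lipschitz by Proposition \ref{uibo}. For $c > c_0$, almost every $x \in M$ satisfies
\[
H(x, Du_0(x)) + \lambda(x) u_0(x) \leq c_0 = c - (c - c_0),
\]
so writing the equation at level $c$ as $\tilde H(x, u, Du) = 0$ with $\tilde H(x, u, p) := H(x, p) + \lambda(x) u - c$, one has $\tilde H(x, u_0(x), Du_0(x)) + \epsilon_0 \leq 0$ a.e.\ with $\epsilon_0 := c - c_0 > 0$. Lemma \ref{SL} gives $u_0 \prec L$ for the associated Lagrangian $L(x, u, \dot x) = L_0(x, \dot x) - \lambda(x) u + c$, where $L_0$ is the Legendre transform of $H(x,p)$, so the hypotheses of the strict form of Lemma \ref{strss} are satisfied, yielding $T_t^+ u_0 < u_0 < T_t^- u_0$ for every $t > 0$.

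The main obstacle, in my view, is the critical case of existence: one needs a uniform-in-$n$ Lipschitz bound to extract an honest (as opposed to merely semicontinuous) limit from the $\varepsilon_n$-subsolutions, and the quantitative estimates of Section \ref{bd}, whose constants depend continuously on $c$, are precisely what make this approximation argument close up. Once existence is in hand, the strict-subsolution property, and hence the strict inequality, is essentially a direct invocation of Lemma \ref{strss}.
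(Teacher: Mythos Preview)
Your proposal is correct and follows essentially the same route as the paper: approximate by smooth subsolutions $u_n$ at level $c_0+1/n$ (equivalently, at $c_n\downarrow c_0$), use the equi-$W^{1,\infty}$ bound from Proposition~\ref{uibo} and Arzel\`a--Ascoli to extract a uniform limit, and invoke stability of viscosity subsolutions to land at $c=c_0$; then apply the strict part of Lemma~\ref{strss} with $\epsilon_0=c-c_0$. Your explicit remark that the constants in Section~\ref{bd} depend continuously on $c$ (hence are uniform along the approximating sequence) is exactly the point the paper leaves implicit when citing (\ref{tupp2}) and Proposition~\ref{uibo}.
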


\begin{proof}
By the definition of $c_0$,  there exists $u_n\in C^\infty(M)$ such that for all $x\in M$,
\begin{equation}\label{aux-eq1}
H(x,Du_n(x))+ \lambda(x)u_n(x)\leqslant c_0+\frac{1}{n}.
\end{equation}
Namely, $u_{n}$ is a subsolution of
\[
H(x,Du)+\lambda(x)u={c_0+1},
\]

By Proposition \ref{uibo},
  $\{u_n \}_{n\geq1}$ is equi-bounded and equi-Lipschitz continuous.
Then by the Ascoli-Arzel\`a theorem, it contains a subsequence $\{u_{n_k} \}_{k\in \mathbb{N}}$ uniformly converging on $M$ to some $u_0\in$ Lip$(M)$. By the stability of subsolutions (see \cite[Theorem 5.2.5]{CS}), $u_0$ is a  subsolution of
\[
H(x,Du)+\lambda(x)u=c_0.
\]
Moreover, for $c>c_0$ and a.e. $x\in M$,  we have
\[
H(x,Du_0)+\lambda(x)u_0+(c-c_0)\leq c.
\]
By Lemma \ref{strss}, \[T_t^+u_0< u_0<T_t^-u_0.\]
This completes the proof.
\end{proof}


Combining Propositions \ref{ns}, \ref{exist-sub} and \ref{exist-sol}, we conclude that (\ref{hu}) has a solution if and only if $c\geq c_0$. It remains to prove the following result.
\begin{proposition}\label{multi}
(\ref{hu}) has at least two solutions for $c>c_0$.
\end{proposition}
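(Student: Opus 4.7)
\medskip
\noindent\emph{Proof plan for Proposition \ref{multi}.} The plan is to use the strict Lipschitz subsolution $u_0$ produced by Proposition \ref{exist-sub} as a seed, and to manufacture two distinct viscosity solutions of (\ref{hu}) by iterating the backward and forward Lax--Oleinik semigroups to infinity. The key asymmetry to exploit is that, because $c>c_0$, $u_0$ is a \emph{strict} subsolution in the sense of Lemma \ref{strss}, so $T_t^+u_0<u_0<T_t^-u_0$ for every $t>0$.

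First I would define the natural ``upper'' and ``lower'' candidates
\[
u_-:=\lim_{t\to+\infty}T_t^-u_0,\qquad u_+:=\lim_{t\to+\infty}T_t^+u_0,\qquad v_-:=\lim_{t\to+\infty}T_t^-u_+.
\]
By Proposition \ref{exist-sol}, $u_-$ is a viscosity solution of (\ref{hu}) and $u_+$ is a forward weak KAM solution of (\ref{hu}); applying Proposition \ref{S1}(4) to $u_+$ then shows that $v_-$ is also a viscosity solution of (\ref{hu}). Thus I have two viscosity solutions in hand, and it only remains to check that they are genuinely different.

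The distinctness comes from the Aubry set. By Proposition \ref{abbb}, the projected Aubry set
\[
\mathcal I_{u_+}=\{x\in M:\ u_+(x)=\lim_{t\to+\infty}T_t^-u_+(x)\}
\]
is non-empty; pick any $x_0\in\mathcal I_{u_+}$, so that $v_-(x_0)=u_+(x_0)$. Since $u_0$ is a subsolution, $T_t^+u_0$ is non-increasing and $T_t^-u_0$ is non-decreasing in $t$, hence $u_+\leq T_t^+u_0$ and $u_-\geq T_t^-u_0$ for every $t\geq 0$. Applying the strict inequalities $T_t^+u_0<u_0<T_t^-u_0$ from Proposition \ref{exist-sub} at any fixed $t>0$ yields
\[
u_+(x)<u_0(x)<u_-(x)\qquad\text{for every }x\in M,
\]
and in particular $v_-(x_0)=u_+(x_0)<u_-(x_0)$. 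Therefore $u_-$ and $v_-$ are two distinct viscosity solutions of (\ref{hu}).

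The only non-trivial ingredient is the non-emptiness of $\mathcal I_{u_+}$, which is exactly Proposition \ref{abbb}; once it is available, the argument reduces to bookkeeping of the monotonicity of the two semigroups and of the strict subsolution inequality provided by $c>c_0$. I do not expect any further obstacle, since all estimates needed to pass to the limits as $t\to+\infty$ have already been set up in Section \ref{bd} and Proposition \ref{exist-sol}.
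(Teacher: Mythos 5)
Your proof is correct and follows essentially the same route as the paper: both build $u_-$, $u_+$, $v_-$ from the strict subsolution $u_0$ via Proposition~\ref{exist-sol}, then combine the non-emptiness of $\mathcal I_{u_+}$ (Proposition~\ref{abbb}) with the strict inequalities $u_+<u_0<u_-$ to separate $u_-$ from $v_-$. The only difference is cosmetic — you argue directly via a point $x_0\in\mathcal I_{u_+}$ where $v_-(x_0)=u_+(x_0)<u_-(x_0)$, whereas the paper phrases the same observation as a proof by contradiction.
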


\begin{proof}
By Proposition \ref{exist-sub}, if $c>c_0$, there exists a strict Lipschitz subsolution $u_0$ of (\ref{hu}). Based on Proposition \ref{eudomii}, for $t>0$,
\begin{equation}\label{eq:3}
T_t^{-}u_0(x)>u_0(x), \quad T_t^{+}u_0(x)<u_0(x).
\end{equation}
Denote
\begin{equation}\label{vis-def}
u_-:=\lim_{t\to +\infty}{ T_{t}^{-} } u_0(x), \quad u_+:=\lim_{t\to +\infty}{T_{t}^{+}}  u_0(x).
\end{equation}
and
\begin{equation}\label{ano-def}
v_-:=\lim_{t\to +\infty}T_{t}^{-}u_+(x).
\end{equation}
By Proposition \ref{exist-sol}, $u_-$ and $v_-$ are solutions of (\ref{hu}).

It remains to verify $u_-\neq v_-$. By contradiction, we assume $u_-\equiv v_-$ on $M$. In view of (\ref{ano-def}), we have
\begin{equation}\label{ulim}
u_-=\lim_{t\to +\infty}T_{t}^{-}u_+(x).
\end{equation}
Based on (\ref{ulim}), it follows from Proposition \ref{abbb} that
\begin{equation}\label{ass}
\mathcal I_{u_+}:=\{x\in M:\ u_-(x)=u_+(x)\}\neq\emptyset.
\end{equation}
On the other hand,
from \eqref{eq:3} and \eqref{vis-def}, it follows that for any $x\in M$,
\begin{equation}\label{eq:4}
u_+(x)<u_0(x)<u_-(x),
\end{equation}
which implies
\[\mathcal I_{u_+}=\emptyset.\]
This contradicts (\ref{ass}).
\end{proof}

\subsection{Proof of Proposition \ref{mthlip}}\label{B}

Assume that $H(x,p)$ is continuous and satisfies the condition ($\star$).
Then the associated Lagrangian $L(x,\dot x)$ satisfies
\begin{itemize}
\item [\textbf{(CL):}] $L(x,\dot x)$ and $\frac{\partial L}{\partial\dot x}(x,\dot x)$ are continuous;

\item [\textbf{(CON):}] $L(x,\dot x)$ is convex in $\dot x$, for any $x\in M$;

\item [\textbf{(SL):}] there is a superlinear function $\eta(r)$ such that $L(x,\dot x)\geq \eta(\|\dot x\|)$.
\end{itemize}
With a slight modification, \cite[Theorem 2.2]{CCJWY} implies
\begin{lemma}\label{C.2}(Erdmann condition).
For each $(x,t)\in M\times(0,+\infty)$, let $\gamma:[0,t]\rightarrow M$ be a minimizer of $T^-_t\varphi(x)$. Set $u_1(s):=T^-_s\varphi(\gamma(s))$ with $s\in[0,t]$, and
\[E_0(s):=\frac{\partial L}{\partial \dot x}(\gamma(s),\dot\gamma(s))\cdot\dot\gamma(s)-L(\gamma(s),\dot\gamma(s)),\]
then
\[E(s):=e^{\int_0^s \lambda(\gamma(r)) dr} [E_0(s)+\lambda(\gamma(s))u_1(s)]\]
satisfies $\dot E(s)=0$ a.e on $[0,t]$.
\end{lemma}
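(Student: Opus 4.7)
The plan is to extract the Erdmann identity from a time-reparametrization variation of the minimizer $\gamma$, using optimality at the level of the calibration ODE. Along $\gamma$, the function $u_1(s)=T_s^-\varphi(\gamma(s))$ is absolutely continuous and, by calibration, satisfies
\[
\dot u_1(s) \;=\; L(\gamma(s),\dot\gamma(s)) - f(\gamma(s),u_1(s)), \qquad u_1(0)=\varphi(\gamma(0)),
\]
a.e.\ on $[0,t]$. For each orientation-preserving $C^1$ diffeomorphism $\sigma:[0,t]\to[0,t]$ with $\sigma(0)=0$ and $\sigma(t)=t$, the reparametrized curve $\tilde\gamma=\gamma\circ\sigma$ still ends at $x$ at time $t$, and the value $\tilde u_1(t)$ generated by the same ODE with $\tilde\gamma$ in place of $\gamma$ and the same initial datum satisfies $\tilde u_1(t)\geq u_1(t)$ by minimality. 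The substitution $\tau=\sigma(s)$, with $\rho(\tau):=\dot\sigma(\sigma^{-1}(\tau))$ and $v(\tau):=\tilde u_1(\sigma^{-1}(\tau))$, recasts the comparison as
\[
\dot v(\tau) \;=\; \rho(\tau)^{-1}\bigl[L(\gamma(\tau),\dot\gamma(\tau)\rho(\tau)) - f(\gamma(\tau),v(\tau))\bigr], \quad v(0)=u_1(0), \quad v(t)\geq u_1(t),
\]
which is exactly the velocity-rescaling family to which Lemma \ref{fe} is tailored.

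Next I would linearise in a family $\rho_\varepsilon=1+\varepsilon\mu+o(\varepsilon)$ with $\mu\in C_c^\infty((0,t))$ and $\int_0^t\mu\,d\tau=0$ (the latter forced by $\sigma(t)=t$). Writing $w(\tau):=\partial_\varepsilon v_\varepsilon(\tau)|_{\varepsilon=0}$, a direct differentiation of the $v$-ODE yields
\[
\dot w + \frac{\partial f}{\partial u}(\gamma,u_1)\,w \;=\; \mu\cdot\Bigl[\tfrac{\partial L}{\partial\dot x}(\gamma,\dot\gamma)\cdot\dot\gamma - L(\gamma,\dot\gamma) + f(\gamma,u_1)\Bigr] \;=\; \mu\,\bigl[E_0 + f(\gamma,u_1)\bigr],
\]
together with $w(0)=0$. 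The integrating factor $\Psi(\tau):=\exp\int_0^\tau(\partial f/\partial u)(\gamma(r),u_1(r))\,dr$ then produces
\[
\Psi(t)\,w(t) \;=\; \int_0^t \Psi(\tau)\,[E_0(\tau)+f(\gamma(\tau),u_1(\tau))]\,\mu(\tau)\,d\tau.
\]
Since $\varepsilon\mapsto v_\varepsilon(t)$ attains its minimum at $\varepsilon=0$, we must have $w(t)=0$; letting $\mu$ range over smooth mean-zero test functions, the du Bois--Reymond lemma forces $\Psi(\tau)[E_0(\tau)+f(\gamma(\tau),u_1(\tau))]$ to coincide a.e.\ with a constant on $[0,t]$, which is the conclusion $\dot E\equiv 0$ a.e.

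The main obstacle is the low regularity of $\gamma$: it is merely absolutely continuous with $\dot\gamma\in L^1$, and $E_0$ is defined only a.e., so the formal differentiation under the integral sign in the definition of $v_\varepsilon$ has to be justified with care. This is exactly where Lemma \ref{fe} is decisive. Part (2) gives the two-sided comparison $L(\gamma(\tau),\dot\gamma(\tau)/(1+\varepsilon_2))\leq(\kappa+1)^{-1}L(\gamma(\tau),\dot\gamma(\tau)/(1+\varepsilon_1))+\kappa(\kappa+1)^{-1}\theta_1^*(0)$, yielding an $L^1$ dominating function for $L(\gamma(\tau),\dot\gamma(\tau)\rho_\varepsilon(\tau))$ uniformly in small $\varepsilon$, while part (1) provides the lower bound $E_0\geq -\theta_1^*(0)$ that controls the sign of $E_0+f$ along the variation. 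Together with the Lipschitz dependence of $f$ on $u$, which secures well-posedness and stability of the ODEs for $v_\varepsilon$ and $w$, these bounds justify the dominated-convergence passage $\varepsilon\to 0$, deliver the ODE for $w$ in the a.e.\ sense, and, through Lebesgue differentiation, convert the integral orthogonality into the pointwise a.e.\ identity $\dot E=0$.
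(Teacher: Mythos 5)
Your proof takes essentially the same route as the paper's: a time-reparametrization variation of the minimizer, leading to a weak (integral) form of the Erdmann identity, and then the du Bois--Reymond lemma. The formal derivation of the ODE for $w$, the integrating-factor step, and the conclusion from $w(t)=0$ via du Bois--Reymond are all correctly laid out. However, there is a genuine gap in exactly the place you flag as the crux, and the way you attempt to patch it is incorrect.

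You claim that Lemma \ref{fe}(2) is a ``two-sided comparison'' furnishing a uniform $L^1$ dominating function for $L(\gamma(\tau),\dot\gamma(\tau)\rho_\varepsilon(\tau))$. It is not two-sided: the inequality
\[
L\Bigl(x,\tfrac{\dot x}{1+\varepsilon_2}\Bigr)\leq(\kappa+1)^{-1}L\Bigl(x,\tfrac{\dot x}{1+\varepsilon_1}\Bigr)+\kappa(\kappa+1)^{-1}\theta_1^*(0)
\]
(for $\varepsilon_1<\varepsilon_2$, $\kappa>0$) bounds the Lagrangian at the \emph{slower} speed by its value at a \emph{faster} speed, never the reverse. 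In your setup $\rho_\varepsilon=1+\varepsilon\mu$ goes both above and below $1$, and wherever $\rho_\varepsilon>1$ you are evaluating $L$ at a speed faster than $\dot\gamma$; since $\dot\gamma$ is only $L^1$ and $L$ is superlinear, $L(\gamma,\dot\gamma\rho_\varepsilon)$ can fail to be integrable there, and nothing in Lemma \ref{fe} controls it from above in terms of $L(\gamma,\dot\gamma)$. The same issue invalidates the admissibility of your competitors: it is not automatic that the $v_\varepsilon$-ODE has an integrable right-hand side, i.e.\ that your smooth reparametrizations $\rho_\varepsilon$ belong to the admissible class. This is exactly why the paper's Step 1.1 performs a measurable-selection argument to identify the set $\Omega_0$ of admissible speed profiles, and why Step 2.2 painstakingly splits the variation direction $\beta$ into its positive and negative parts $\beta^\pm$, treating ``slow-down'' and ``speed-up'' directions asymmetrically and using the monotonicity from Lemma \ref{fe}(1) together with membership in $\Omega_0$ to obtain the one-sided $L^1$ bounds needed for Lebesgue's theorem. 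Your proof silently assumes that all smooth mean-zero $\mu$ are admissible and that a two-sided domination holds; both assumptions are unjustified, and in fact the argument cannot go through without some version of the sign-splitting and the measurable selection. (Separately: as stated, the lemma's definition of $E(s)$ has $\int_0^t$ in the exponent, which would make the prefactor a constant; the intended exponent, consistent with Step 3 of the paper's proof and with your $\Psi(\tau)$, is $\int_0^s$.)
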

Based on Lemma \ref{C.2}, we have
\begin{thm}\label{A.1}
The function $(x,t)\mapsto T_t^-\varphi(x)$ is locally Lipschitz on $M\times (0,+\infty)$. More precisely, given two positive constants $\delta$ and $T$ with $\delta<T$. For each $\varphi\in C(M)$ and $t\in[\delta,T]$, the Lipschitz constant of $T^-_t\varphi(x)$ depends only on $\|\varphi\|_\infty$, $\delta$ and $T$.
\end{thm}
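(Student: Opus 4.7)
The overall strategy is to combine an action bound on minimizers with the Erdmann conservation law (Lemma~\ref{C.2}) and the bilateral superlinearity (SL) to deduce a uniform bound on the speed of minimizers, and then to deduce the Lipschitz estimate by a standard comparison argument. Along the way, Lipschitz in $t$ follows easily from Lipschitz in $x$.

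\emph{Step 1 ($L^\infty$ and action bounds).} First, by applying the Lax--Oleinik formula to a constant curve $\gamma \equiv x_0$ (invoking Lemma~\ref{CL}) together with a Gronwall step using (LIP) on $f$, as in the proof of Lemma~\ref{ubd}, one obtains $\|T^-_t\varphi\|_\infty \le M_1$ on $[0,T]$ with $M_1 = M_1(\|\varphi\|_\infty, T)$. Next, for any minimizer $\gamma : [0,t] \to M$ of $T^-_t \varphi(x)$ with $t \in [\delta, T]$, setting $u_1(s) := T^-_s \varphi(\gamma(s))$ and using the defining identity
\[
\int_0^t L(\gamma(s), \dot\gamma(s)) \, ds \;=\; T^-_t \varphi(x) - \varphi(\gamma(0)) + \int_0^t f(\gamma(s), u_1(s)) \, ds,
\]
one gets a uniform action bound $\int_0^t L(\gamma, \dot\gamma)\, ds \le A = A(\delta, T, \|\varphi\|_\infty)$.

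\emph{Step 2 (Erdmann and pointwise velocity bound).} By Lemma~\ref{C.2}, the function $E(s)$ is a.e.\ constant along $\gamma$; call this constant $E_*$, so that
\[
E_0(s) \;=\; E_* \, e^{-\int_0^s \partial_u f(\gamma(r), u_1(r))\, dr} - f(\gamma(s), u_1(s)),
\]
with the exponential factor in $[e^{-\lambda T}, e^{\lambda T}]$ by (LIP). Lemma~\ref{fe}(1) gives $E_0(s) \ge -\theta_1^*(0)$, which immediately bounds $E_*$ from below. The decisive step is the upper bound on $E_*$. If $E_*$ were large, then $E_0(s) = H(\gamma(s), p(s))$ (with $p(s) := \partial_{\dot x} L(\gamma, \dot\gamma)$) would be uniformly large on $[0,t]$; by (SL), which yields $\theta_2^{-1}(E_0) \le \|p(s)\| \le \theta_1^{-1}(E_0)$, combined with the convexity estimate $p(s)\cdot \dot\gamma(s) \ge H(\gamma(s), p(s)) - \max_x H(x,0)$, the velocity $\|\dot\gamma(s)\|$ would be forced to blow up uniformly in $s$ with $E_*$. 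Then $L \ge \theta_2^*(\|\dot\gamma\|)$ and superlinearity of $\theta_2^*$ would force $\int_0^t L \, ds \ge \delta\, \theta_2^*(\|\dot\gamma\|)$ to exceed $A$ once $E_*$ is large enough, contradicting Step~1. This yields $|E_*| \le B$, hence $|E_0(s)| \le B e^{\lambda T} + \sup|f|$ uniformly. A final application of (SL) then bounds $\|p(s)\| \le P$; combining with $\theta_2^*(\|\dot\gamma\|) \le L = p \cdot \dot\gamma - E_0 \le P\|\dot\gamma\| + B'$ and the superlinearity of $\theta_2^*$, one concludes a uniform bound $\|\dot\gamma(s)\| \le V = V(\delta, T, \|\varphi\|_\infty)$ for all minimizers and all $s \in [0,t]$.

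\emph{Step 3 (Lipschitz estimate).} Fix $x, y \in M$ with $d(x, y)$ small relative to $\delta V$. Take a minimizer $\gamma$ for $T^-_t \varphi(x)$ and build a competitor $\tilde\gamma : [0, t] \to M$ joining $\gamma(0)$ to $y$ by a linear time reparametrization of $\gamma$ on $[0, t - \eta]$ followed by a constant-speed geodesic from $x$ to $y$ on $[t-\eta, t]$, with $\eta \asymp d(x,y)$. The $C^1$-regularity of $L$ in $\dot x$ (from (CL)), applied on the compact velocity ball of radius $\lesssim V$, controls the reparametrization cost linearly in $\eta$; a Gronwall step analogous to Lemma~\ref{strss} then absorbs the implicit $u$-dependence of the Lax--Oleinik formula via (LIP). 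Symmetrizing yields $|T^-_t \varphi(x) - T^-_t \varphi(y)| \le C\, d(x, y)$ with $C = C(\delta, T, \|\varphi\|_\infty)$. Lipschitz in $t$ follows from this and the Lax--Oleinik formula along constant curves. The main obstacle is the upper bound on $E_*$ in Step~2, which is precisely where the $\delta$-dependence enters (via $1/t \le 1/\delta$ in the averaging) and where Erdmann conservation must be played off against the bilateral superlinearity (SL) and the action bound.
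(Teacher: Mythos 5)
Your proof is correct and follows essentially the same strategy as the paper's: an action bound from the Lax--Oleinik formula, the Erdmann conservation law of Lemma~\ref{C.2} to control the ``energy'' $E_0(s)$ along minimizers, the bilateral superlinearity (SL) to convert this into a uniform velocity bound with $\delta$-dependence entering through $t\geq\delta$, and finally a geodesic competitor to deduce the Lipschitz estimate. The only differences are cosmetic: the paper bounds $E_0$ by a pigeonhole argument (the action bound on $[0,t]$, $t\geq\delta$, locates a time $s_0$ with $\|\dot\gamma(s_0)\|$, hence $E_0(s_0)$, controlled, and Erdmann propagates this bound), whereas you run the equivalent argument in contrapositive form (if the conserved $E_*$ were large the action integral would blow up); and in Step 3 the paper uses the simpler competitor that keeps $\gamma|_{[0,t-d_0]}$ unchanged and only replaces the tail by a geodesic, avoiding the reparametrization-cost estimate you invoke.
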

\begin{proof}
\textbf{Step 1. Lipschitz estimate of minimizers.} Given $(x,t)\in M\times[\delta,T]$. In the following, we denote by $\gamma:[0,t]\rightarrow M$ a minimizer of $T^-_t\varphi(x)$. We focus on the Lipschitz regularity of the curve $\gamma$.
Note that $T^-_t(-\|\varphi\|_\infty)\leq T^-_t\varphi\leq T^-_t\|\varphi\|_\infty$, $T^-_t\varphi$ is bounded by a constant $K$ depends only on $\|\varphi\|_\infty$ and $T$. We then have
\begin{equation*}
\begin{aligned}
  K &\geq T^-_t\varphi(x)=\varphi(\gm(0))+\int_0^t\bigg[L(\gm(s),\dot\gm(s))-\lambda(\gm(s))T^-_s\varphi(\gm(s))\bigg]ds
  \\ &\geq -\|\varphi\|_\infty-\lambda_0 KT+\int_0^tL(\gm(s),\dot\gm(s))ds.
\end{aligned}
\end{equation*}
By (SL), there is a constant $D$ such that $L(\gamma(s),\dot\gamma(s))\geq \|\dot\gamma(s)\|+D$, then we have
\[K+(\lambda_0 K+|D|)T+\|\varphi\|_\infty\geq \int_0^t \|\dot\gamma(s)\|ds.\]
Thus, there is $s_0\in [0,t]$ such that $\|\dot\gamma(s_0)\|$ is bounded by a constant depends only on $\|\varphi\|_\infty$, $\delta$ and $T$. Recall
\[E(s):=e^{\int_0^t \lambda(\gamma(r)) dr} [E_0(s)+\lambda(\gamma(s))u_1(s)].\]
By Lemma \ref{C.2},
$\dot E(s)=0$ a.e. on $[0,t]$. It follows that
\[E_0(s)\leq e^{\lambda T}(|E_0(s_0)|+\lambda_0 K)+\lambda_0 K:=F_1.\]
By (CON) we have
\begin{equation*}
\begin{aligned}
  L(\gamma(s),\frac{\dot\gamma(s)}{1+\|\dot\gamma(s)\|})-L(\gamma(s),\dot\gamma(s))&\geq (\frac{1}{1+\|\dot\gamma(s)\|}-1)\frac{\partial L}{\partial \dot x}(\gamma(s),\dot\gamma(s))\cdot \dot \gamma(s)
  \\ &\geq (\frac{1}{1+\|\dot\gamma(s)\|}-1)(F_1+L(\gamma(s),\dot\gamma(s))).
\end{aligned}
\end{equation*}
We denote by $K_3$ the bound of $L(x,\dot x)$ for $\|\dot x\|\leq 1$. Then we have
\[L(\gamma(s),\dot\gamma(s))\leq 2K_3+F_1.\]
By (SL), $\|\dot\gamma(s)\|$ is bounded by a constant depends only on $\|\varphi\|_\infty$, $\delta$ and $T$.

\bigskip

\noindent \textbf{Step 2. Lipschitz estimate of $(x,t)\mapsto T^-_t\varphi(x)$.} We first show that $u(x,t):=T^-_t\varphi(x)$ is locally Lipschitz in $x$. For any $r>0$ with $2r<\delta$, given $(x_0,t)\in M\times[\delta,T]$ and $x$, $x'\in B(x_0,r)$, denote by $d_0:=d(x,x')\leq 2r<\delta$ the Riemannian distance between $x$ and $x'$, we have
\begin{equation*}
\begin{aligned}
  u(x',t)-u(x,t)\leq& \int_{t-d_0}^{t}\bigg[L(\alpha(s),\dot \alpha(s))-\lambda(\alpha(s))u(\alpha(s),s)\bigg]ds
  \\ &-\int_{t-d_0}^{t}\bigg[L(\gamma(s),\dot{\gamma}(s))-\lambda(\gamma(s))u(\gamma(s),s)\bigg]ds,
\end{aligned}
\end{equation*}
where $\gamma(s)$ is a minimizer of $u(x,t)$ and $\alpha:[t-d_0,t]\rightarrow M$ is a geodesic satisfying $\alpha(t-d_0)=\gamma(t-d_0)$ and $\alpha(t)=x'$ with constant speed. By Step 1, the bound of $\|\dot \gamma(s)\|$ depends only on $\|\varphi\|_\infty$, $\delta$ and $T$. Since
\begin{equation*}
  \|\dot \alpha(s)\|\leq \frac{d(\gamma(t-d_0),x')}{d_0}\leq \frac{d(\gamma(t-d_0),x)}{d_0}+1,
\end{equation*}
and $d(\gamma(t-d_0),x)\leq \int_{t-d_0}^t\|\dot \gamma(s)\|ds$, the bound of $\|\dot \alpha(s)\|$ also depends only on $\|\varphi\|_\infty$, $\delta$ and $T$. Exchanging the role of $(x,t)$ and $(x',t)$, one obtain that $|u(x,t)-u(x',t)|\leq J_1d(x,x')$, where $J_1$ depends only on $\|\varphi\|_\infty$, $\delta$ and $T$. By the compactness of $M$, we conclude that for $t\in[\delta,T]$, the value function $u(\cdot,t)$ is Lipschitz on $M$.

We are now going to show the locally Lipschitz continuity of $u(x,t)$ in $t$. Given $t$ and $t'$ with $\delta\leq t<t'\leq T$. Let $\gamma:[0,t']\rightarrow M$ be a minimizer of $u(x,t')$, then
\begin{equation*}
\begin{aligned}
  u(x,t')-u(x,t)=u(\gamma(t),t)-u(x,t)+\int_t^{t'}\bigg[L(\gamma(s),\dot{\gamma}(s))-\lambda(\gamma(s))u(\gamma(s),s)\bigg]ds,
\end{aligned}
\end{equation*}
where the bound of $\|\dot \gamma(s)\|$ depends only on $\|\varphi\|_\infty$, $\delta$ and $T$. We have shown that for $t\geq \delta$, the following holds
\begin{equation*}
  u(\gamma(t),t)-u(x,t)\leq J_1d(\gamma(t),x)\leq J_1\int_t^{t'}\|\dot \gamma(s)\|ds\leq J_2(t'-t).
\end{equation*}
Thus, $u(x,t')-u(x,t)\leq J_3(t'-t)$, where $J_3$ depends only on $\|\varphi\|_\infty$, $\delta$ and $T$. The condition $t'<t$ is similar. We conclude the Lipschitz continuity of $u(x,\cdot)$ on $[\delta,T]$.
\end{proof}

Let $\|T^-_t\varphi(x)\|_\infty\leq K$ for all $t\geq 0$, with the bound $K$ independent of $t$. Note that $T^-_t\varphi(x)=T^-_1\circ T^-_{t-1}\varphi(x)$. {Fix $\delta=1/2$ and $T=1$ in Theorem \ref{A.1}. It follows that} the Lipschitz constant of $T^-_1\circ T^-_{t-1}\varphi(x)$ depends only on $K$, which is independent of $t$. This completes the proof of Proposition \ref{mthlip}.

%

\medskip

\end{document}